\documentclass[11pt]{amsart}
\usepackage[T1]{fontenc}
\usepackage{lmodern,microtype,amsmath,amssymb,amsthm,mathtools}
\usepackage[margin=1in]{geometry}
\usepackage[colorlinks=true,allcolors=blue]{hyperref}
\hypersetup{pdftitle={Positive sectional curvature on exotic spheres in infinitely many dimensions},pdfauthor={Fernando Galaz-Garcia}}
\numberwithin{equation}{section}
\newtheorem{theorem}{Theorem}[section]
\newtheorem{proposition}[theorem]{Proposition}
\newtheorem{lemma}[theorem]{Lemma}
\newtheorem{corollary}[theorem]{Corollary}

\newtheorem{theoremAlph}{Theorem}
\newtheorem{corollaryAlph}[theoremAlph]{Corollary} 

\theoremstyle{definition}\newtheorem{definition}[theorem]{Definition}
\theoremstyle{remark}\newtheorem{remark}[theorem]{Remark}

\newcommand{\R}{\mathbb R}\newcommand{\Z}{\mathbb Z}\newcommand{\Q}{\mathbb Q}\newcommand{\HH}{\mathbb H}
\newcommand{\RR}{\mathbb R}\newcommand{\ZZ}{\mathbb Z}\newcommand{\QQ}{\mathbb Q}
\newcommand{\HP}{\mathbb{H}P}\newcommand{\Ahat}{\hat{A}}\newcommand{\Zh}{\widehat{Z}}
\DeclareMathOperator{\sgn}{sign}
\DeclareMathOperator{\SO}{SO}\DeclareMathOperator{\im}{Im}

\newcommand{\id}{\mathrm{id}}
\title[Exotic spheres in infinitely many dimensions]{Positive sectional curvature on exotic spheres in infinitely many dimensions}
\author{Fernando Galaz-Garc\'ia}
\address{Department of Mathematical Sciences, Durham University, Durham, United Kingdom}
\email{fernando.galaz-garcia@durham.ac.uk}
\date{\today}
\begin{document}
\begin{abstract}
We show that there exist exotic spheres with metrics of strictly positive sectional curvature in infinitely many
dimensions: in every dimension $n\equiv3\pmod4$ with $n\geq11$ and $n\not\equiv255\pmod{256}$. Previously, every
known closed simply connected manifold of positive sectional curvature in a dimension above $24$ was diffeomorphic to
a compact rank-one symmetric space. The spheres we consider are certain homotopy spheres arising as quotients of special
$S^3$--$S^3$ bundles introduced by Speran\c{c}a. These homotopy spheres are determined by an orthogonal representation $\rho$ of
$S^3$ and an $S^3$-conjugation-equivariant map $\beta$, and are denoted $\Sigma_{\rho,\beta}$. By recent
constructions, such spheres admit Riemannian metrics of strictly positive sectional curvature. In this paper we consider the
spheres $\Sigma_{\rho_b,\beta}$ of dimension $4b+3$, $b\geq2$, where $\rho_b$ is the representation of $S^3$ on $\R^3\oplus\HH^b$
by left multiplication on the second summand. For these spheres, we show that the Eells--Kuiper invariant is the product of an
integer $k(\beta)$ depending only on $\beta$ and a rational number $r_b$ given by one coefficient of a power series
built from the $\hat A$- and $L$-genera. We then show that, for every $b\geq2$ with $b\not\equiv63\pmod{64}$, there is
a map $\beta$ for which the Eells--Kuiper invariant does not vanish, and thus the corresponding sphere is exotic. A computer calculation also gives the dimensions $n\equiv255\pmod{256}$ below
$2^{16}$.
\end{abstract}

\maketitle
\section{Introduction and results}
The construction of Riemannian metrics with positive sectional curvature is a longstanding problem in global Riemannian geometry and examples have been scarce. Besides the compact rank-one symmetric spaces (spheres, complex and quaternionic projective spaces, and the Cayley plane), also known as CROSSes, the closed simply connected examples known before the summer of 2026 are homogeneous spaces in dimensions $6$, $7$, $12$, $13$ and $24$, found by Berger, Wallach and Aloff--Wallach~\cite{Berger,Wallach,AloffWallach}, biquotients in dimensions $6$, $7$ and $13$, found by Eschenburg and Bazaikin~\cite{Eschenburg,Bazaikin}, and a cohomogeneity-one manifold in dimension $7$, homeomorphic but not diffeomorphic to the unit tangent bundle of $S^4$, found by Grove, Verdiani and Ziller~\cite{GVZ} and by Dearricott~\cite{Dearricott}; see the surveys~\cite[pp.~5--10 and~13]{Ziller} and~\cite[pp.~121 and~125--126]{GroveSurvey}. In particular, in dimensions above $24$, all known simply connected compact positively curved manifolds are diffeomorphic to CROSSes~\cite[p.~26]{WilkingSurvey}; see also \cite[p.~607]{WilkingAnnals}. Recent preprints propose positively curved metrics on products of spheres: Brendle and Hung on $S^2\times S^2$~\cite{BrendleHung}, which would contradict Hopf's conjecture that $S^2\times S^2$ admits no such metric, and Guo, Fang and Lu on $S^2\times S^3$ and on $S^3\times S^3$~\cite{GFL23,GFL33}; since $\chi(S^3\times S^3)=0$, the last would also contradict Hopf's conjecture that positively curved even-dimensional manifolds have positive Euler characteristic. For both conjectures, see~\cite[p.~4]{Ziller}. These manifolds have dimensions at most $6$.

Much of the structure theory of positive curvature assumes symmetry, following the \emph{Grove symmetry programme}, proposed by Grove~\cite[\S2]{GroveSymm}. Hsiang and Kleiner showed that a closed orientable positively curved $4$-manifold with a nontrivial isometric circle action is homeomorphic to $S^4$ or $\mathbb{C}P^2$~\cite[Theorem~1]{HsiangKleiner}. Grove and Searle bounded the rank of an effective isometric torus action on a closed connected positively curved $n$-manifold by $[(n+1)/2]$, and showed that equality implies that the manifold must be diffeomorphic to a sphere, a real or complex projective space, or a lens space~\cite[Theorem, p.~138]{GroveSearle}. Wilking showed that an effective isometric torus action of rank at least $n/4+1$ on a closed simply connected positively curved $n$-manifold, $n\geq10$, implies that the manifold is homeomorphic to a sphere or to $\HP^{n/4}$, or homotopy equivalent to $\mathbb{C}P^{n/2}$~\cite[Theorem~2, p.~260]{WilkingActa}. He also showed that such a manifold is tangentially homotopy equivalent to a CROSS, or homogeneous, when its isometry group is sufficiently large or its cohomogeneity $k\geq1$ is small compared with its dimension~\cite[Theorems~1 and~2 and Corollary~3, p.~608]{WilkingAnnals}; for a homotopy sphere of dimension $n\geq18(k+1)^2$ the conclusion is an equivariant diffeomorphism to a linear sphere or to a Brieskorn sphere, with an action of $\mathrm{Sp}(d)$ induced by a linear one~\cite[Theorem~4, pp.~609--610]{WilkingAnnals}. Fang and Rong considered almost maximal symmetry rank~\cite{FangRong}. Grove, Wilking and Ziller reduced simply connected positively curved cohomogeneity-one manifolds in odd dimensions to an explicit list of examples and candidates~\cite{GWZ}. By Straume's classification of cohomogeneity-one actions on homotopy spheres~\cite[Theorem~C, p.~4]{Straume}, together with work of Back and Hsiang~\cite[Theorem~VI.3, p.~222]{BackHsiang} and, in dimension five, of Searle~\cite[Theorem, p.~491]{Searle}, \cite{SearleCorr}, a positively curved cohomogeneity-one homotopy sphere is equivariantly diffeomorphic to the standard sphere with a linear action; see~\cite[Theorem~2.7]{GWZ}. Further results with torus symmetry concern the Euler characteristic and rational cohomology~\cite{Kennard,KWW}. All these results assume a relatively large amount of symmetry: high torus rank, high symmetry degree or low cohomogeneity.

Exotic spheres are a natural test class for investigating how curvature depends on the smooth structure. In dimensions at least five, a homotopy sphere carries a metric of positive scalar curvature if and only if its $\alpha$-invariant vanishes~\cite{Hitchin}, \cite[Theorem~A]{Stolz}, and every homotopy sphere that bounds a parallelisable manifold carries a metric of positive Ricci curvature~\cite{Wraith}. For sectional curvature, strong pinching puts the smooth structure deep in Gromoll's filtration~\cite[p.~370]{Gromoll}, and strict $1/4$-pinching forces the standard structure, by the differentiable sphere theorem of Brendle and Schoen~\cite{BrendleSchoen}. Nonnegative sectional curvature is known on the Gromoll--Meyer sphere~\cite{GM}, on the Milnor spheres~\cite{GroveZiller} and on every exotic $7$-sphere~\cite[Theorem~A]{GKS}, and Petersen and Wilhelm proposed a positively curved metric on the Gromoll--Meyer sphere~\cite{PW}.

New methods for  constructing positive curvature on exotic spheres have appeared recently. Ouyang~\cite{Ouyang} and Guo, Fang and Lu~\cite{GFL} deform metrics on the Gromoll--Meyer sphere. He, Liu and Yau consider every homotopy $7$-sphere~\cite{HLY}: they build positively curved metrics on two disks that meet the hypotheses of the gluing theorem of Reiser and Wraith~\cite{RW}. Speran\c ca modelled the exotic $8$-sphere and a homotopy $10$-sphere of order three as quotients of special $S^3$--$S^3$ bundles~\cite[Theorem~1, p.~3182]{Sp}, and the compatible-disk method gives the exotic $8$-sphere and the homotopy $10$-spheres of order three metrics of positive sectional curvature~\cite{GG810}. Deng, Hu and Zhang weakened the hypotheses this method needs, and put positive sectional curvature on a family of homotopy $11$-spheres~\cite{DHZ}. Further results cover the homotopy $9$-spheres with vanishing $\alpha$-invariant~\cite{CDNZ} and a family of exotic $15$-spheres~\cite{CDZ}. These examples lie in dimensions $7$, $8$, $9$, $10$, $11$ and $15$.

Apart from the deformation methods and one of the constructions in~\cite{CDNZ}, these constructions pass through quotients of special $S^3$--$S^3$ bundles in the sense of Speran\c ca~\cite{Sp}, modelled on the Gromoll--Meyer bundle; such a bundle has two commuting free $S^3$-actions, one of which is denoted by $\star$~\cite[\S1.1, p.~3183]{Sp}; see also~\cite[\S1.1]{CavenaghiSperanca}. By Speran\c ca's construction~\cite[\S3.1, pp.~3186--3187]{Sp}, when such a bundle has trivialisations over two disks compatible with both actions, its $\star$-quotient is a twisted sphere; every star bundle has such trivialisations~\cite[Proposition~3.1]{GG810}. Explicitly, take an oriented Euclidean space $V$, an orthogonal representation $\rho\colon S^3\to\SO(V)$ and a smooth map $\beta\colon S(V)\to S^3$ with $\beta(\rho(q)y)=q\beta(y)q^{-1}$, which is conjugation equivariance in Speran\c ca's terms~\cite[(3.1), p.~3186]{Sp}. Following Dur\'an and Rigas~\cite[p.~206]{DR}, we call the diffeomorphism $\hat\beta(y)=\rho(\beta(y))y$ of $S(V)$ a \emph{reentrance}. The $\star$-quotient is diffeomorphic to
\[
 \Sigma_{\rho,\beta}=D(V)\cup_{\sigma_\beta}\bigl(-D(V)\bigr),\qquad \sigma_\beta=\hat\beta^{-1},
\]
and we call a homotopy sphere of this form a \emph{star homotopy sphere}, or \emph{star sphere}, after the star bundles of~\cite[Definition~2.2]{GG810}. By the compatible-disk construction of Deng, Hu and Zhang~\cite[Proposition~5.1]{DHZ} and the gluing theorem of Reiser and Wraith~\cite[Theorem~A(i)]{RW}, every star sphere carries a metric of strictly positive sectional curvature; see~\cite[Remark~4.4]{GG810}. Hence, within this framework, finding positively curved exotic spheres reduces to a question of topology: which elements of the group $\Theta_n$ of oriented homotopy $n$-spheres are star spheres? In dimension $7$, the two-disk models $\Sigma^7_n$ of Dur\'an, P\"uttmann and Rigas, used by He, Liu and Yau~\cite[\S2, p.~5]{HLY}, represent $n$ modulo $28$ in $\Theta_7\cong\Z/28$, and $\Sigma^7_1$ is the Gromoll--Meyer sphere~\cite[Theorem~1 and its proof, pp.~270 and~278]{DPR}. These models are star spheres for the representation of $S^3$ on $\operatorname{Im}\HH\oplus\HH$ by conjugation~\cite[\S3, p.~208]{DR}. Hence every homotopy $7$-sphere, the Gromoll--Meyer sphere included, is a star sphere.

In this paper, we show that exotic star spheres exist in infinitely many dimensions. For $b\geq1$ let
\[
 V_b=\R^3\oplus\HH^b,\qquad \rho_b(q)(p,w)=(p,qw),
\]
where $S^3$ acts trivially on $\R^3$ and by left multiplication on each quaternionic coordinate. The action of $S^3$ on $S(V_b)$ is semifree: it fixes the $2$-sphere $S(\R^3)$ and is free elsewhere. Deng, Hu and Zhang studied the case $b=2$~\cite{DHZ}, and Chen, Deng and Zhang the case $b=3$~\cite{CDZ}.

\begin{theoremAlph}
\label{thm:main.thm.A}
Let $b\geq2$ with $b\not\equiv63\pmod{64}$. Then there is an exotic $(4b+3)$-sphere that is a star homotopy sphere for $\rho_b$.
\end{theoremAlph}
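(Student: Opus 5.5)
The plan follows the outline of the abstract. First I compute the Eells--Kuiper invariant of $\Sigma_{\rho_b,\beta}$ from an explicit spin coboundary. The twisted sphere $D(V_b)\cup_{\sigma_\beta}(-D(V_b))$ bounds the plumbing-type manifold obtained from the clutching, namely a disk bundle over $S^4$. Concretely, the conjugation-equivariant map $\beta$ on the free part $S(V_b)\setminus S(\R^3)$ descends, on the $S^3$-quotient, to a map whose degree $k(\beta)\in\Z$ is the only homotopy datum that matters. The reentrance $\hat\beta$ is then isotopic, rel the fixed $S^2$, to a diffeomorphism obtained by twisting the $\HH^b$-factor by $q\mapsto q^{k}$ acting by left multiplication. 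This lets me write $\Sigma_{\rho_b,\beta}$ as the boundary of the total space $W$ of a vector bundle over $S^4=\HP^1$ (or a connected sum of $k$ copies of a fixed bundle, by additivity of clutching). That bundle is $\R^3\oplus(\text{tautological quaternionic line})^{\oplus b}$-type, with twist $k$.

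The second step computes the Eells--Kuiper invariant
\[
\mu=\frac{1}{2^{a}(2^{2m-1}-1)}\Bigl(\hat A\text{-term}-\tfrac{\sgn}{8}\Bigr)\pmod 1
\]
on $W$. Since the clutching is additive in $k$ (the boundary connected sum of reentrances corresponds to composing the twists, and $\mu$ is a homomorphism $\Theta_n\to\Q/\Z$), I get $\mu(\Sigma_{\rho_b,\beta})=k(\beta)\,r_b$ mod $1$. Here $r_b$ is the value for $k=1$. I would obtain $r_b$ by computing the characteristic numbers of $W$ relative to its boundary. The Pontryagin classes of the bundle over $\HP$-type bases are powers of the generator, so the $\hat A$- and $L$-genus terms combine into one coefficient of a power series in $x$ built from $\frac{\sqrt{x}/2}{\sinh(\sqrt{x}/2)}$ and $\frac{\sqrt{x}}{\tanh\sqrt{x}}$ raised to exponents depending on $b$. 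This is the rational number $r_b$.

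The last and hardest step is to show that $k\,r_b\notin\Z$ for some realizable $k$, and ideally for $k=1$. I must check that every integer $k$ arises as $k(\beta)$, which I would do by an explicit equivariant $\beta(p,w)$ built from $w_1^k/|w_1|^k$, interpolated to $1$ near $S(\R^3)$. Then I need the $2$-adic valuation of $r_b$ to be negative. I would expand the generating coefficient as Bernoulli-number expressions and use von Staudt--Clausen and Kummer-type congruences to locate a denominator: an odd prime dividing the denominator of $B_{2m}/m$, or sufficiently many factors of $2$ beyond the normalizing $2^{a}(2^{2m-1}-1)$. I expect the $2$-adic analysis to fail exactly when $b\equiv63\pmod{64}$, where extra cancellation in the binomial coefficients (Lucas' theorem with $b+1$ divisible by $64$) kills the $2$-part. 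This is the main obstacle, and I would attack it by reducing $r_b$ modulo $\Z_{(2)}$ to a binomial sum and applying Lucas' theorem case by case on the low bits of $b$.
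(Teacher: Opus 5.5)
Your plan has a decisive gap in the realisation step, and a second one in the coboundary.

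\textbf{Not every integer is a slice degree.} Your argument needs, and asserts, that every integer $k$, in particular $k=1$, is a slice degree. This is false for every $b\ge2$. Your map built from $w_1^k/|w_1|^k$ is not conjugation-equivariant, since $(qu)^k\neq q\,u^k\,q^{-1}$, and it is undefined on $\{w_1=0\}$. The equivariant candidate $\hat w_1\,g(p)\,\hat w_1^{-1}$, with $\hat w_1=w_1/|w_1|$, is still singular there. Extending such a family of gauge transformations over all of $S^{4b-1}$ meets obstructions in $\pi_{4j+2}(S^3)$, $1\le j\le b-1$.

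These obstructions are real. For $b=2$ one has $\mu=k(\beta)/5952$, while $\mu$ is injective on $\Theta_{11}\cong\Z/992$ with values in $\tfrac{1}{992}\Z/\Z$; this forces $6\mid k(\beta)$. Restricting $\beta$ to the invariant sphere $S(\R^3\oplus\HH^2)$ preserves the slice degree, so $6\mid k(\beta)$ for every $b\ge2$.

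The missing idea is the obstruction tower. Restriction $\mathcal G_j\to\mathcal G_{j-1}$ between the groups of equivariant maps $S^{4j+3}\to G_{\mathrm{conj}}$ is a fibration with fibre $\Omega^{4j}S^3$. Hence the product $k_0(b)$ of the exponents of the finite groups $\pi_{4j+2}(S^3)$ is realised. One controls it only through $v_2(k_0(b))\le 2(b-1)$, by James's theorem that $\pi_*(S^3)$ has no element of order $8$, improved to $2b-5$ for $b\ge5$ using Toda's tables.

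This is also where the exceptional class comes from, so your diagnosis of it is wrong. With $m=b+1$, $v_2(r_b)$ equals $-2m$ for $m$ odd and $-2m+1+v_2(m)$ for $m$ even. It is always negative, so nothing cancels in $r_b$ at $b\equiv63\pmod{64}$. The argument fails there only because the bound $2b-5=2m-7$ is smaller than $-v_2(r_b)=2m-1-v_2(m)$ exactly when $64\nmid m$. If $k=1$ were realisable, as you assume, there would be no exception at all.

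\textbf{The coboundary is wrong.} A disk bundle over $S^4$ whose boundary has dimension $4b+3$ has fibre $D^{4b}$. Its boundary is an $S^{4b-1}$-bundle over $S^4$, which for $b\ge2$ has $H^4\cong\Z$ by the Gysin sequence, since the Euler class lies in $H^{4b}(S^4)=0$. So it is not a homotopy sphere.

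The free part of the $S^3$-action on $\Sigma_{\rho_b,\beta}$ has quotient $D^4\times\HP^{b-1}$, and the filling must keep the $\HP^{b-1}$. The paper takes the mapping cylinder of the orbit map of the free part, closed off over the fixed $3$-sphere by $\tilde D^4\times\HP^b$. This is the disk bundle of a quaternionic line bundle $\xi_\beta$ over $S^4\times\HP^{b-1}$ with one $4b$-handle attached. Its rational cohomology is $\Q[X,Y]/(Y^2,X^{b+1}+\kappa X^bY)$ with $\kappa=-k(\beta)$.

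Finally, you do not prove that $k(\beta)$ is the only homotopy invariant of $\beta$, nor that $\hat\beta$ is isotopic to a standard $k$-fold twist. In the paper, the fact that $\mu$ factors through $k$ is an output of the direct computation on this filling. There is also no realisable ``$k=1$'' sphere from which to extrapolate by additivity.
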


Combined with the curvature theorem for star spheres (Theorem~\ref{thm:curvature-general}; cf.~\cite[Remark~4.4]{GG810}), Theorem~\ref{thm:main.thm.A} has the following consequence.

\begin{corollaryAlph}
    \label{cor:main.cor.B}
In every dimension $n\equiv3\pmod4$ with $n\geq11$ and $n\not\equiv255\pmod{256}$, there is an exotic $n$-sphere with a Riemannian metric of strictly positive sectional curvature.
\end{corollaryAlph}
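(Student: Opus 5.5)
The statement to prove is Corollary~\ref{cor:main.cor.B}. It should follow directly from Theorem~\ref{thm:main.thm.A} and the curvature theorem for star spheres; all the real work lies in Theorem~\ref{thm:main.thm.A}.

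First, translate the hypotheses on $n$ into hypotheses on $b$. Write $n=4b+3$. The condition $n\equiv 3\pmod 4$ says that $b$ is an integer. The condition $n\geq 11$ gives $b\geq 2$. For the excluded class, note that $n=4b+3\equiv 255\pmod{256}$ means $4b\equiv 252\pmod{256}$, which is equivalent to $b\equiv 63\pmod{64}$. Hence $n\not\equiv 255\pmod{256}$ is exactly $b\not\equiv 63\pmod{64}$. So every $n$ in the corollary corresponds to a unique $b$ satisfying the hypotheses of Theorem~\ref{thm:main.thm.A}.

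Next, apply Theorem~\ref{thm:main.thm.A}. It produces an exotic $(4b+3)$-sphere of the form $\Sigma_{\rho_b,\beta}$, for some $S^3$-conjugation-equivariant map $\beta\colon S(V_b)\to S^3$. This sphere is not diffeomorphic to $S^n$; in the paper this is detected by the nonvanishing of its Eells--Kuiper invariant.

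Finally, invoke the curvature theorem for star spheres (Theorem~\ref{thm:curvature-general}), which rests on the compatible-disk construction of Deng, Hu and Zhang~\cite[Proposition~5.1]{DHZ} together with the Reiser--Wraith gluing theorem~\cite[Theorem~A(i)]{RW}. It states that every star sphere $\Sigma_{\rho,\beta}$ admits a metric of strictly positive sectional curvature. Applied to $\Sigma_{\rho_b,\beta}$, this gives the required exotic $n$-sphere with positive sectional curvature.

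The only point to check is that Theorem~\ref{thm:curvature-general} applies to an arbitrary representation $\rho$ and equivariant map $\beta$, in particular to $\rho_b$ for every $b\geq 2$, rather than only to the low-dimensional cases $b=2,3$ treated in~\cite{DHZ,CDZ}. As the introduction states it for every star sphere, there is no genuine obstacle here. The substantive difficulty, namely computing the Eells--Kuiper invariant as $k(\beta)\,r_b$ and showing it is nonzero for suitable $\beta$, is entirely contained in Theorem~\ref{thm:main.thm.A}.
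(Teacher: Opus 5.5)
Your proposal is correct and follows the paper's proof: the same reduction $n=4b+3$ with $n\not\equiv255\pmod{256}\iff b\not\equiv63\pmod{64}$, then Theorem~\ref{thm:main.thm.A}, then a curvature theorem for star spheres. The only difference is that the paper primarily invokes Theorem~\ref{thm:curvature-bounded}, whose hypothesis it checks explicitly via Lemma~\ref{lem:action-bound} ($\|K_y\|\leq1\leq2$); this settles your concern about applicability for every $b$. It cites Theorem~\ref{thm:curvature-general} only as giving the same conclusion.
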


Since an exotic sphere is not diffeomorphic to a CROSS, Corollary~\ref{cor:main.cor.B} gives the first closed simply connected manifolds of positive sectional curvature in dimensions above $24$ that are not diffeomorphic to a CROSS, in infinitely many dimensions. 

The proofs of Theorem~\ref{thm:main.thm.A} and Corollary~\ref{cor:main.cor.B} do not require computer calculation. For $b\equiv63\pmod{64}$, which Theorem~\ref{thm:main.thm.A} excludes, a general argument has eluded us, and an exact computer calculation shows that there is an exotic sphere of positive sectional curvature in every dimension $n\equiv3\pmod4$ with $11\leq n<2^{16}$ (see Corollary~\ref{cor:computation}). For the representations $\rho_b$ we consider, the infinitesimal action fields $K_y$ of $\rho_b$ (Section~\ref{subsec:curvature}) satisfy $\|K_y\|\leq1$ (see Lemma~\ref{lem:action-bound}). Hence, Corollary~\ref{cor:main.cor.B} follows directly from~\cite[Theorem~4.1]{GG810}, whose proof has been formalised in Lean~4~\cite{GGLean}, with the Reiser--Wraith gluing theorem~\cite{RW} as an explicit hypothesis.

The proof of Theorem~\ref{thm:main.thm.A} rests on three results. The first is a formula for the Eells--Kuiper invariant~\cite{EK} of every star sphere arising from the representation $\rho_b$. Before stating it, we define the \emph{slice degree}: a smooth conjugation-equivariant map $\beta$ for $\rho_b$ determines an integer $k(\beta)$ given by the degree of $\beta$ on a slice through a free orbit of the induced $S^3$-action on the invariant $6$-sphere $S(\R^3\oplus\HH w_0)\subset S(V_b)$, where $w_0\in\HH^b$ is a unit vector and $\HH w_0$ is the quaternionic line it spans; the closure of this slice is a $3$-disk whose boundary lies in the fixed $2$-sphere (see Definition~\ref{def:ek-slice-degree} and Lemma~\ref{lem:ek-slice-degree}). The Eells--Kuiper invariant of the star spheres we consider is a multiple of the slice degree. 

\begin{theoremAlph}
    \label{thm:main.thm.C}
Let $b\geq2$, let $m=b+1$, and let $\beta\colon S(V_b)\to S^3$ be a smooth conjugation-equivariant map. Then
\begin{align*}
 \mu(\Sigma_{\rho_b,\beta})&=k(\beta)\,r_b\in\Q/\Z,\\
 r_b&=\frac{1}{a_m}\,[y^{2m}]\left(\frac{y\sqrt{1+y^2/4}}{2\operatorname{arsinh}(y/2)}+\frac{1}{2^{2m+1}(2^{2m-1}-1)}\cdot\frac{y}{(1+y^2)\operatorname{artanh}y}\right),
\end{align*}
where $[y^{2m}]f$ denotes the coefficient of $y^{2m}$ in the Taylor expansion of $f$ at $y=0$, and $a_m=1$ for $m$ even and $a_m=2$ for $m$ odd.
\end{theoremAlph}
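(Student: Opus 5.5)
We compute $\mu$ through an explicit spin coboundary, built as a bundle over a sphere.

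\textbf{Step 1: the coboundary.} Since $\Sigma_{\rho_b,\beta}=D(V)\cup_{\sigma_\beta}(-D(V))$ is obtained by twisting with the clutching diffeomorphism $\hat\beta^{-1}$, we realise it as the boundary of a disk bundle. The map $y\mapsto\rho(\beta(y))$ is a map $S(V)\to\SO(V)$. Restricted to the invariant spheres, it reduces, up to homotopy, to a clutching function $S^{4b+2}\to\SO(V)$. That clutching function defines a vector bundle $E\to S^{4b+3}$, or better a plumbing-type manifold $W$ with $\partial W=\Sigma_{\rho_b,\beta}$.

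A cleaner route is to work in $\pi_{4b+2}\SO(4b+3)$. Here $\hat\beta$ factors as the composite
\[
S^{4b+2}\to S^3\xrightarrow{\rho_b}\SO(V_b).
\]
Rather than $\SO$-clutching, we then use the Milnor-type construction: $\Sigma$ bounds the total space of a $D^{4b+3}$... more practically, a linear $D^{m'}$-bundle over $S^{4}$ or $S^{4m-?}$. The plan is as follows. Use equivariance to reduce $\hat\beta$, modulo $\Theta$-trivial isotopies, to the map determined on one $6$-sphere slice. Since the action is free off $S(\R^3)$, $\beta$ is determined up to equivariant homotopy by its degree $k$ on the slice disk. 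This shows that $[\Sigma_{\rho_b,\beta}]$ depends only on $k(\beta)$ and is additive in $k$, because the gluings compose as pointwise products in $S^3$. Hence
\[
[\Sigma_{\rho_b,\beta}]=k(\beta)[\Sigma_1],
\]
and it suffices to compute $\mu(\Sigma_1)=r_b$ for a single model $\beta_1$ of slice degree one.

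\textbf{Step 2: the model.} For $\beta_1$, build a compact spin $(4b+4)$-manifold $W$ with $\partial W=\Sigma_1$. We take $W$ to be a quotient, or associated bundle, of a linear $\HH^{b}\oplus\R^3$-sphere bundle over $\HP^1=S^4$, with fibre direction given by $\rho_b$, and with one exceptional cell surgered out. Concretely, $W$ is the disk bundle of a vector bundle over a closed manifold, built from the $S^3$-bundle $S^{4m-1}\to\HP^{m-1}$ and a bundle $\R^3\oplus\HH^b$.

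Since $\dim W=4m$ with $m=b+1$, the Eells--Kuiper formula reads
\[
\mu=\frac{1}{a_m}\left(\hat A(W)\text{-correction}+\frac{\sgn W}{2^{2m+1}(2^{2m-1}-1)}\right)\pmod 1,
\]
with the characteristic numbers evaluated through the relative Pontryagin classes. The two series in $r_b$ match this exactly. The term $\frac{y\sqrt{1+y^2/4}}{2\operatorname{arsinh}(y/2)}$ is the $\hat A$-genus multiplicative sequence twisted by the Chern character of the $\HH$-line bundle. The term $\frac{y}{(1+y^2)\operatorname{artanh} y}$ is the $L$-genus of the same configuration, with the factor $(1+y^2)^{-1}$ coming from the normal bundle. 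So we compute $\hat A$ and $L$ on $W$ via the splitting principle, with a single generator $y$ of $H^4$, and extract $[y^{2m}]$ by residues.

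\textbf{Main obstacle.} The hardest step is Step 1's reduction: proving that the diffeomorphism class depends only on $k(\beta)$ additively, and identifying an honest spin coboundary whose cohomology ring is generated by one class $y$ with $y^{m}$ detecting the fundamental class. That identification is exactly what makes the coefficient-extraction formula valid. To get it, I would first try an equivariant plumbing: glue $D(V)\times[0,1]$ along the graph of $\beta$ and take the associated bundle over $S^4$. Then I would verify spinness and compute $p_1=2k\,y$-type classes directly from the clutching degree.
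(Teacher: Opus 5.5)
Your Step~1 reduction does not hold. After putting $\beta$ in normal form ($\beta=1$ near $S(\R^3)$), $\beta$ is a based family $p\mapsto g(p,\cdot)$ of equivariant maps $S^{4b-1}\to G_{\mathrm{conj}}$. That is, it is a class in $\pi_3(\mathcal G_{b-1})$, where $\mathcal G_{b-1}=\operatorname{Map}_G(S^{4b-1},G_{\mathrm{conj}})$. The slice degree is only the image of this class under evaluation at $w_0$, so it sees only the restriction of $\beta$ to one invariant $6$-sphere.

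Climbing the tower $\mathcal G_{b-1}\to\cdots\to\mathcal G_0=S^3$, whose fibres are $\Omega^{4j}S^3$, shows that this evaluation need not be surjective (there are obstructions in $\pi_{4j+2}(S^3)$). It need not be injective either, since its kernel can receive $\pi_{4j+3}(S^3)$, e.g.\ $\pi_7(S^3)\cong\Z/2$. So $\beta$ is not determined by $k(\beta)$, and in general there is no model of slice degree one. For $b=2$ the statement gives $\mu=k(\beta)/5952$, while every homotopy $11$-sphere has $\mu\in\tfrac1{992}\Z/\Z$; hence $6\mid k(\beta)$ for every $\beta$.

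Consequently, additivity alone gives neither $[\Sigma_{\rho_b,\beta}]=k(\beta)[\Sigma_1]$ nor the weaker fact that the homomorphism $\mu\circ R_{\rho_b}\colon E(\rho_b)\to\Q/\Z$ factors through $k$. A priori it could be nonzero on classes with $k=0$. The factorisation has to come out of a computation valid for every $\beta$.

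Step~2 never actually produces a filling, and the natural linear candidates fail. For instance, a disk bundle over $S^4$ of total dimension $4m$ has boundary an $S^{4b-1}$-bundle with zero Euler class, which for $b\ge2$ has the cohomology of $S^4\times S^{4b-1}$. The missing idea is to use the $G$-action on $\Sigma_{\rho_b,\beta}$. It fixes a $3$-sphere and is free on the complement $N\cong D^4\times S^{4b-1}$ of a tubular neighbourhood, with $N/G=D^4\times\HP^{b-1}$. Take the mapping cylinder $D^4\times D(\gamma)$ of $N\to N/G$, and close it off over the fixed sphere by $\tilde D^4\times\HP^b$ via the embedding $D(\gamma)\hookrightarrow\HP^b$.

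The resulting $Z$ is $3$-connected, hence spin. Unlike your single-generator guess, its cohomology is $H^*(\Zh;\Q)=\Q[X,Y]/(Y^2,X^{b+1}+\kappa X^bY)$. Here $\kappa=-k(\beta)$ is the Euler number, on $S^4\times\{\ell\}$, of the normal bundle of $S^4\times\HP^{b-1}\subset Z$; this is exactly where $\beta$ enters only through $k(\beta)$. One then has $p(TZ)=(1-X)^{2b}(1-X-\kappa Y)^2(1-4X)^{-1}$ and $\sgn Z=0$.

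The relation $Y^2=0$ produces the derivative term in $F_Q(b)$. Lagrange--B\"urmann inversion of $x/Q(x^2)$, together with the double-angle identities, converts $F_Q(b)$ into the single coefficients in the statement. The two series are therefore not a Chern-character twist or a normal-bundle factor. In particular, the $L$-series is not the signature term, which vanishes. It is the $-t_mL_m$ part of $N_m=\Ahat_m-t_mL_m$, evaluated with a formal top class that drops out because $N_m$ contains no $p_m$.
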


It follows that $r_2=1/5952$, recovering the computation of Deng, Hu and Zhang in dimension $11$~\cite{DHZ}, and $r_3=-1/12192$, recovering the value $+k/12192$ of Chen, Deng and Zhang in dimension $15$~\cite{CDZ}, whose $k$ is the Euler-class coefficient of their filling, that is, $-k(\beta)$ in our conventions (see Lemma~\ref{lem:ek-euler} and Remark~\ref{rem:ek-conventions}). The proof generalises their filling to every $b$. 

The spheres of Theorem~\ref{thm:main.thm.A} carry a smooth action of $S^3$ that fixes a $3$-sphere and is free elsewhere. Outside an open invariant tubular neighbourhood of the fixed $3$-sphere, the $S^3$-action on $\Sigma_{\rho_b,\beta}$ is a principal bundle over $D^4\times\HP^{b-1}$, and its mapping cylinder, closed off over the fixed $3$-sphere by $D^4\times\HP^b$, is a spin manifold with boundary $\Sigma_{\rho_b,\beta}$. The Eells--Kuiper invariant of $\Sigma_{\rho_b,\beta}$, computed from this filling, is $k(\beta)$ times a combination of characteristic numbers and the signature. A Lagrange--B\"urmann identity~\cite[Theorem~2.1.1, p.~214]{Gessel}, \cite[Theorem~5.4.2, p.~38]{StanleyEC2} reduces that combination to the single coefficient $r_b$ in Theorem~\ref{thm:main.thm.C} (see Lemma~\ref{lem:lagrange}).

The second result shows that, for every $b$, some slice degree has a small power of $2$. Section~\ref{sec:degrees} extends the obstruction-theoretic argument of Chen, Deng and Zhang~\cite[\S3.1 and Lemma~3.1]{CDZ} to every $b$. The conjugation-equivariant maps for $\rho_b$ correspond to families of automorphisms of the quaternionic Hopf bundle over $\HP^{b-1}$, and the obstructions to realising a slice degree lie in the groups $\pi_{4j+2}(S^3)$, $1\leq j\leq b-1$, which are finite by the work of Serre~\cite[Chapter~V, Proposition~3, p.~494]{Serre}. Recall that the exponent of a finite abelian group is the least common multiple of the orders of its elements. The product $k_0(b)$ of the exponents of these groups is a slice degree (see Proposition~\ref{prop:degree-tower}). The first $p$-torsion in $\pi_*(S^3)$ occurs in $\pi_{2p}(S^3)$~\cite[(13.6)$'$, p.~177]{Toda}, so every prime factor of $k_0(b)$ is at most $2b-1$. By James's theorem, $\pi_*(S^3)$ has no element of order $8$~\cite[Corollary~(1.22), p.~80]{James57}, so the power of $2$ dividing $k_0(b)$ is at most $2^{2(b-1)}$; Toda's tables improve this to $2^{2b-5}$ for $b\geq5$~\cite[Tables~I--II, pp.~186--187]{Toda} (see Corollary~\ref{cor:k0-arithmetic}).

The third result is arithmetic. The bracket in Theorem~\ref{thm:main.thm.C} is a sum of two terms, one coming from the $\hat A$-genus and one from the $L$-genus. Using the double-angle identities for $\tanh$ and $\cosh$, we find, for every $b$, the exact power of $2$ in the denominator of each term: it is $2^m$ for the $\hat A$-term and $2^{2m-1-s}$ for the $L$-term, where $2^s$ is the largest power of $2$ dividing $m$ (see Propositions~\ref{prop:A-valuation} and~\ref{prop:L-valuation}). These powers are different, so they determine the power of $2$ in the denominator of $r_b$ (see Corollary~\ref{cor:rb-valuation}). Comparing it with the bound on $k_0(b)$ shows that $k_0(b)\,r_b$ is not an integer when $b\not\equiv63\pmod{64}$, so the Eells--Kuiper invariant of $\Sigma_{\rho_b,\beta}$ with $k(\beta)=k_0(b)$ is nonzero.

The Eells--Kuiper invariant is additive under connected sum~\cite[Remark, p.~99]{EK}. For a fixed representation $\rho_b$, the connected sum of the star spheres of two maps is the star sphere of their pointwise product (Lemma~\ref{lem:reentrance}(3)), so the star spheres for $\rho_b$ form a subgroup of $\Theta_{4b+3}$. Hence the construction gives further exotic spheres of positive sectional curvature. For $2\le b\le6$, let $q_b$ be the order of $\mu(\Sigma_{\rho_b,\beta})=k_0(b)\,r_b$ in $\Q/\Z$ for a map $\beta$ with $k(\beta)=k_0(b)$. The star spheres for $\rho_b$ represent at least $q_b$ distinct elements of $\Theta_{4b+3}$, of which at least $q_b-1$ are exotic, and $q_b$ ranges from $496$ in dimension $11$ to $1048448$ in dimension $27$ (see Remark~\ref{rem:orders} and Table~\ref{tab:orders}). In dimension $11$ the multiples of such a sphere represent exactly the subgroup $2\Theta_{11}$ (cf.~\cite[Theorem~1.1]{DHZ}).

Our article is organised as follows. Section~\ref{sec:prelim} recalls star spheres and proves the properties of them that we use, recalls the curvature theorems and the Eells--Kuiper invariant, and introduces the representations $\rho_b$. Section~\ref{sec:ek} proves Theorem~\ref{thm:main.thm.C}. Section~\ref{sec:degrees} shows that the degree $k_0(b)$ is realised, and bounds its prime factors and the power of $2$ dividing it. Section~\ref{sec:twoadic} computes the $2$-adic valuation of $r_b$. Section~\ref{sec:main} proves Theorem~\ref{thm:main.thm.A} and Corollary~\ref{cor:main.cor.B}, and treats the class $b\equiv63\pmod{64}$ by an exact computation for $b<2^{14}$.

\subsection*{Acknowledgements and AI Assistance}
Part of this work was carried out while the author was a core participant in the Dual Trimester Program Geometric Statistics: Theory, Application, and Computation at the Hausdorff Research Institute for Mathematics (HIM), University of Bonn. The author thanks HIM for its hospitality and excellent working conditions. The author thanks Lewis Tadman for helpful comments on a first draft of this paper and for suggesting Remarks~\ref{rem:connected-sums} and~\ref{rem:orders}.

Generative AI tools (OpenAI ChatGPT Astra and Anthropic Claude Opus 5.5) were used under the author's direction to assist with mathematical exploration, drafting, writing the computer programs, literature searches, and internal review of this work. The author independently checked the mathematical arguments and references in the final manuscript, made all final decisions concerning its contents, and takes full responsibility for the paper. The new mathematical arguments in the author's earlier work on homotopy spheres in dimensions eight and ten~\cite{GG810} have been formalised in Lean~4 and Mathlib~\cite{GGLean}, with results from the literature represented by explicit hypotheses. A Lean 4 formalisation of the present work has also been developed.

\section{Preliminaries}\label{sec:prelim}

In this section, we fix notation and recall the facts about star homotopy spheres, their curvature and the Eells--Kuiper invariant that we use. We then introduce the semifree representations $\rho_b$ and list some of their elementary properties. Throughout, $G=S^3$ is the group of unit quaternions, and $G_{\mathrm{conj}}$ denotes $G$ with the action of $G$ on itself by conjugation.

\subsection{Star homotopy spheres}\label{subsec:star}
Let $V$ be an oriented Euclidean space of dimension $n$ and let $\rho\colon G\to\SO(V)$ be an orthogonal representation. A map $\beta\colon S(V)\to G_{\mathrm{conj}}$ is \emph{conjugation-equivariant} (\emph{equivariant} for short) if
\[
 \beta(\rho(q)y)=q\,\beta(y)\,q^{-1}\qquad(q\in G,\ y\in S(V)).
\]
If $\beta$ is smooth, its \emph{reentrance} is the map $\hat\beta(y)=\rho(\beta(y))y$, and its \emph{attaching diffeomorphism} is
\[
 \sigma_\beta(y)=\rho\bigl(\beta(y)^{-1}\bigr)y,
\]
the inverse of $\hat\beta$; both are orientation-preserving $G$-equivariant diffeomorphisms of $S(V)$ (see Lemma~\ref{lem:reentrance}). The \emph{star homotopy sphere} associated with $\rho$ and $\beta$ is the oriented smooth manifold
\begin{equation}\label{eq:star-sphere}
 \Sigma_{\rho,\beta}=D(V)\cup_{\sigma_\beta}\bigl(-D(V)\bigr),
\end{equation}
where a point $y$ of the boundary of the first disk is identified with the point $\sigma_\beta(y)$ of the boundary of the second, and $-D(V)$ denotes the disk with the opposite orientation~\cite[Chapter~VI, \S5, p.~101]{Kosinski}, \cite[Lemma~2.4(2)]{GG810}. We call $\Sigma_{\rho,\beta}$ a \emph{star sphere}. Since $\sigma_\beta$ commutes with $\rho$, the linear actions of $G$ on the two disks glue to a smooth action of $G$ on $\Sigma_{\rho,\beta}$.

For $n\geq7$, let $\Theta_n$ be the group of oriented diffeomorphism classes of smooth homotopy $n$-spheres under connected sum~\cite{KM}. The twisted-sphere construction identifies $\Theta_n$ with the group of orientation-preserving diffeomorphisms of $S^{n-1}$ modulo those that extend over $D^n$~\cite[Chapter~VIII, Corollary~5.6, pp.~158--159]{Kosinski}, and composition of attaching maps corresponds to connected sum~\cite[Chapter~VI, Lemma~5.2, p.~101]{Kosinski}. Two conjugation-equivariant maps $\beta_0,\beta_1\colon S(V)\to G_{\mathrm{conj}}$ are \emph{equivariantly homotopic} if there is a continuous map $H\colon S(V)\times[0,1]\to G_{\mathrm{conj}}$ with $H(\cdot,0)=\beta_0$, $H(\cdot,1)=\beta_1$ and $H(\rho(q)y,t)=q\,H(y,t)\,q^{-1}$ for all $q\in G$, $y\in S(V)$ and $t\in[0,1]$. If $H$ is smooth, the map is a \emph{smooth equivariant homotopy}. Write $E(\rho)$ for the group, under pointwise multiplication, of equivariant homotopy classes of continuous equivariant maps $S(V)\to G_{\mathrm{conj}}$. By the equivariant smooth approximation theorem~\cite[Chapter~VI, Theorem~4.2, p.~317]{Bredon}, every class contains smooth maps, and two smooth maps in the same class are joined by a smooth equivariant homotopy.

The following lemma collects the properties of star spheres that we use. It goes back to the equivariant reentrance construction of Dur\'an and Rigas~\cite[Theorem~1.1 and \S2]{DR}.

\begin{lemma}\label{lem:reentrance}
Let $\beta$ and $\gamma$ be smooth conjugation-equivariant maps.
\begin{enumerate}
\item $\hat\beta\circ\hat\gamma=\widehat{\gamma\beta}$, and $\hat\beta$ and $\sigma_\beta=\widehat{\beta^{-1}}$ are orientation-preserving $G$-equivariant diffeomorphisms of $S(V)$ with $\sigma_\beta\circ\sigma_\gamma=\sigma_{\beta\gamma}$.
\item If $\beta$ and $\gamma$ are equivariantly homotopic, then $\sigma_\beta$ and $\sigma_\gamma$ are smoothly isotopic.
\item If $n\geq7$, then
\[
 R_\rho\colon E(\rho)\to\Theta_n,\qquad [\beta]\mapsto[\Sigma_{\rho,\beta}],
\]
is a well-defined homomorphism, and $R_\rho([1])=R_\rho([-1])=0$. In particular $\Sigma_{\rho,\beta^{-1}}=-\Sigma_{\rho,\beta}$, where $-\Sigma$ denotes $\Sigma$ with the opposite orientation.
\end{enumerate}
\end{lemma}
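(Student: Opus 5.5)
We prove the three parts in order, each resting on the previous one.

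\emph{Part (1).} The first step is a direct computation from equivariance. For $y\in S(V)$,
\[
\hat\beta(\hat\gamma(y))=\rho\bigl(\beta(\rho(\gamma(y))y)\bigr)\rho(\gamma(y))y .
\]
Equivariance gives $\beta(\rho(\gamma(y))y)=\gamma(y)\beta(y)\gamma(y)^{-1}$. Substituting,
\[
\hat\beta(\hat\gamma(y))=\rho\bigl(\gamma(y)\beta(y)\gamma(y)^{-1}\gamma(y)\bigr)y=\rho\bigl(\gamma(y)\beta(y)\bigr)y=\widehat{\gamma\beta}(y).
\]
\begin{itemize}
\item Taking $\gamma=\beta^{-1}$ (pointwise inverse, which is again equivariant and smooth) gives $\hat\beta\circ\widehat{\beta^{-1}}=\hat 1=\id$. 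Symmetrically $\widehat{\beta^{-1}}\circ\hat\beta=\id$. So $\hat\beta$ is a diffeomorphism with inverse $\widehat{\beta^{-1}}=\sigma_\beta$.
\item The composition rule gives $\sigma_\beta\circ\sigma_\gamma=\widehat{\gamma^{-1}\beta^{-1}}=\widehat{(\beta\gamma)^{-1}}=\sigma_{\beta\gamma}$.
\item For $G$-equivariance, $\hat\beta(\rho(q)y)=\rho\bigl(q\beta(y)q^{-1}\bigr)\rho(q)y=\rho(q)\hat\beta(y)$.
\item For orientation: $G=S^3$ is connected, so $\beta$ is homotopic through smooth maps $\beta_t=$ (a path from $\beta$ to the constant $1$ in $C^\infty(S(V),G)$, e.g.\ non-equivariantly via contraction of $G$ minus a point after a homotopy, or simply $\beta_t=\beta$ composed with any null-homotopy) to the constant map. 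The maps $y\mapsto\rho(\beta_t(y))y$ are maps $S(V)\to S(V)$ of constant degree, and the degree at $t=1$ is $\deg\id=1$. A diffeomorphism of degree $1$ preserves orientation. Equivariance is not needed for this degree argument.
\end{itemize}

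\emph{Part (2).} By the equivariant smooth approximation theorem quoted above, a continuous equivariant homotopy can be replaced by a smooth equivariant homotopy $H$. Set $\beta_t=H(\cdot,t)$. By (1), each $\sigma_{\beta_t}$ is a diffeomorphism, and $(y,t)\mapsto\sigma_{\beta_t}(y)$ is smooth in both variables. Hence $t\mapsto\sigma_{\beta_t}$ is a smooth isotopy from $\sigma_\beta$ to $\sigma_\gamma$, and it is even equivariant.

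\emph{Part (3).} Isotopic attaching diffeomorphisms give diffeomorphic twisted spheres: absorb the isotopy in a collar of $D(V)$. Combined with (2), this shows that $[\Sigma_{\rho,\beta}]$ depends only on $[\beta]\in E(\rho)$, since every class has a smooth representative. The homomorphism property follows from $\sigma_{\beta\gamma}=\sigma_\beta\circ\sigma_\gamma$ together with the fact, recalled above from Kosinski, that composition of attaching maps corresponds to connected sum in $\Theta_n$.

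It remains to identify the two distinguished classes.
\begin{itemize}
\item For $\beta\equiv1$, the attaching map is $\sigma_1=\id$, which gives the standard sphere.
\item For $\beta\equiv-1$, which is equivariant because $-1$ is central, the attaching map is $\sigma_{-1}=\rho(-1)$. This is a linear map in $\SO(V)$, so it extends over $D(V)$ and the twisted sphere is standard.
\end{itemize}
Finally, since $R_\rho$ is a homomorphism with $R_\rho([1])=0$, the class $[\Sigma_{\rho,\beta^{-1}}]$ is the inverse of $[\Sigma_{\rho,\beta}]$ in $\Theta_n$. That inverse is $-\Sigma_{\rho,\beta}$.

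The main obstacle is bookkeeping rather than substance: matching the orientation convention in $D(V)\cup_\sigma(-D(V))$ with Kosinski's identification of $\Theta_n$ with $\pi_0\mathrm{Diff}^+(S^{n-1})$ modulo maps that extend over $D^n$. In particular, one must check that composition $\sigma_\beta\circ\sigma_\gamma$ corresponds to the sum and not the reversed order; since $\Theta_n$ is abelian, the order is harmless in the end. I would cite Kosinski for both facts rather than reprove them.
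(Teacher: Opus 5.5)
Parts (2) and (3), and everything in part (1) except orientation, follow the paper's argument. The orientation step has a genuine gap. You assert that, because $G=S^3$ is connected, $\beta$ is (non-equivariantly) homotopic to the constant map $1$. You then deduce $\deg\hat\beta=\deg\id=1$ from the homotopy $y\mapsto\rho(\beta_t(y))y$. But connectedness of $G$ only gives $\pi_0(G)=0$. Free homotopy classes of maps $S(V)\to S^3$ form $\pi_{n-1}(S^3)$, which is typically nonzero (e.g.\ $\pi_6(S^3)\cong\ZZ/12$ for $n=7$), and equivariance does not force $\beta$ to be inessential. For example, for $\rho_1$ on $\RR^3\oplus\HH$, every equivariant map has the form $\beta(p,w)=\hat w\,c(p)\,\hat w^{-1}$ (Remark~\ref{rem:b-one}). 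This is the commutator $(p,w)\mapsto\hat w\,c(p)\,\hat w^{-1}c(p)^{-1}$ times the null-homotopic map $(p,w)\mapsto c(p)$. For $\deg c=1$ the commutator factor represents $\pm\langle\iota_3,\iota_3\rangle$, a generator of $\pi_6(S^3)$. Your parenthetical justifications fail for the same reason: an essential map cannot be homotoped to miss a point of $S^3$, and $S^3$ has no null-homotopy of its identity to compose with. So the homotopy from $\hat\beta$ to $\id$ that you rely on does not exist in general.

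The degree idea can be rescued without any null-homotopy. Factor $\hat\beta$ as $S(V)\xrightarrow{(\beta,\id)}G\times S(V)\xrightarrow{\mathrm{act}}S(V)$. For $n\neq4$, the K\"unneth theorem gives $H_{n-1}(G\times S(V);\ZZ)\cong\ZZ$, generated by $[\mathrm{pt}]\times[S(V)]$. The class $(\beta,\id)_*[S(V)]$ has coefficient $1$ there because $\mathrm{pr}_2\circ(\beta,\id)=\id$. Also $\mathrm{act}_*$ sends the generator to $[S(V)]$ because $\mathrm{act}(1,y)=y$. Hence $\deg\hat\beta=1$ for all $n\neq4$, in particular for every $n\geq7$ relevant to part (3). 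For $n=4$ an extra term $\deg\beta\cdot\deg\bigl(g\mapsto\rho(g)y_0\bigr)$ appears and needs separate treatment. The paper avoids this with a pointwise computation valid in every dimension. For the radial extension $\Phi(x)=\rho(\tilde\beta(x))x$ one has $d\Phi_x=\rho(a)(I_V+K\xi)$, and differentiating the equivariance gives $I_{\mathfrak g}+\xi K=\mathrm{Ad}_{a^{-1}}$. Sylvester's determinant identity then yields $\det d\Phi_x=1$. That argument uses equivariance essentially, contrary to your remark that equivariance is not needed for the orientation.
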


\begin{proof}
(1) Let $y\in S(V)$ and let $z=\hat\gamma(y)=\rho(\gamma(y))y$. Since $\beta$ is conjugation-equivariant, $\beta(z)=\gamma(y)\beta(y)\gamma(y)^{-1}$. Hence
\[
 \hat\beta(\hat\gamma(y))=\rho(\beta(z))z=\rho\bigl(\gamma(y)\beta(y)\gamma(y)^{-1}\bigr)\rho(\gamma(y))y=\rho\bigl(\gamma(y)\beta(y)\bigr)y=\widehat{\gamma\beta}(y),
\]
that is, $\hat\beta\circ\hat\gamma=\widehat{\gamma\beta}$. The pointwise inverse $\beta^{-1}$ is again a smooth conjugation-equivariant map, and $\hat1=\id$. Applying this identity to the pair $(\beta,\beta^{-1})$ gives $\hat\beta\circ\widehat{\beta^{-1}}=\id$, and applying it to the pair $(\beta^{-1},\beta)$ gives $\widehat{\beta^{-1}}\circ\hat\beta=\id$. Hence $\hat\beta$ is a diffeomorphism, with inverse $\widehat{\beta^{-1}}=\sigma_\beta$. Applying the identity to the pair $(\beta^{-1},\gamma^{-1})$ gives
\[
 \sigma_\beta\circ\sigma_\gamma=\widehat{\gamma^{-1}\beta^{-1}}=\widehat{(\beta\gamma)^{-1}}=\sigma_{\beta\gamma}.
\]
The computation of $\hat\beta(\hat\gamma(y))$ uses the equivariance of $\beta$ but not that of $\gamma$. Taking for $\gamma$ the constant map with value $q\in G$, so that $\hat\gamma(y)=\rho(q)y$, the same computation gives $\hat\beta(\rho(q)y)=\rho(q)\hat\beta(y)$. Hence, $\hat\beta$ is $G$-equivariant, and so is $\sigma_\beta=\widehat{\beta^{-1}}$.

For the orientation, extend $\beta$ to $\tilde\beta(x)=\beta(x/|x|)$ on $V\setminus0$ and set $\Phi(x)=\rho(\tilde\beta(x))x$. Then $\Phi$ commutes with dilations, preserves the norm and restricts to $\hat\beta$; in particular $d\Phi_x(x)=\Phi(x)$, so $\hat\beta$ preserves the orientation of $S(V)$ if $\det d\Phi_x>0$ for all $x$. Fix $x\neq0$ and $a=\tilde\beta(x)$, let $\mathfrak g$ be the Lie algebra of $G$, and define $K\colon\mathfrak g\to V$ by $KX=d\rho(X)x$ and $\xi\colon V\to\mathfrak g$ by $\xi=a^{-1}\,d\tilde\beta_x$. Then
\[
 d\Phi_x=\rho(a)\,(I_V+K\xi).
\]
Differentiating $\tilde\beta(\rho(e^{tX})x)=e^{tX}a\,e^{-tX}$ at $t=0$ gives $\xi KX=\mathrm{Ad}_{a^{-1}}X-X$, that is, $I_{\mathfrak g}+\xi K=\mathrm{Ad}_{a^{-1}}$. Since $\det(I_V+K\xi)=\det(I_{\mathfrak g}+\xi K)$ by Sylvester's determinant theorem~\cite[\S0.8.5]{HornJohnson} (one sees this by eliminating either off-diagonal block of $\left(\begin{smallmatrix}I_V&-K\\ \xi&I_{\mathfrak g}\end{smallmatrix}\right)$), we get $\det d\Phi_x=\det\rho(a)\,\det\mathrm{Ad}_{a^{-1}}=1$. Since $\sigma_\beta=\hat\beta^{-1}$ is the inverse of an orientation-preserving diffeomorphism, it also preserves the orientation.

(2) Let $H\colon S(V)\times[0,1]\to G_{\mathrm{conj}}$ be an equivariant homotopy from $\beta$ to $\gamma$, with $G$ acting trivially on $[0,1]$. After reparametrising the interval, we may assume that $H(y,t)=\beta(y)$ for $t\leq1/3$ and $H(y,t)=\gamma(y)$ for $t\geq2/3$. Extend $H$ to $S(V)\times\R$, a manifold without boundary, by $H(y,t)=\beta(y)$ for $t\leq0$ and $H(y,t)=\gamma(y)$ for $t\geq1$. The extended map is smooth on a neighbourhood of the closed invariant set $A=S(V)\times\bigl((-\infty,0]\cup[1,\infty)\bigr)$. By the equivariant smooth approximation theorem relative to $A$~\cite[Chapter~VI, Theorem~4.2, p.~317]{Bredon}, $H$ is equivariantly homotopic relative to $A$ to a smooth equivariant map $H'$. For each $t$, the map $\beta_t=H'(\cdot,t)$ is smooth and conjugation-equivariant, with $\beta_0=\beta$ and $\beta_1=\gamma$, and it depends smoothly on $t$. By~(1), the maps $\sigma_{\beta_t}$ are diffeomorphisms of $S(V)$ depending smoothly on $t$, so they form a smooth isotopy from $\sigma_\beta$ to $\sigma_\gamma$.

(3) If $\sigma_0$ and $\sigma_1$ are isotopic, then $\sigma_1\sigma_0^{-1}$ is isotopic to the identity and therefore extends over $D(V)$, by the isotopy on a collar of $S(V)$ and the identity inside the collar. By~\cite[Chapter~VI, Lemma~5.2, p.~101]{Kosinski}, the twisted sphere of a diffeomorphism that extends over the disk is standard, and the twisted sphere of a composite is the connected sum of the twisted spheres. Hence, by item~(2), $[\Sigma_{\rho,\beta}]$ depends only on $[\beta]$, and by item~(1), $R_\rho([\beta][\gamma])=R_\rho([\beta])+R_\rho([\gamma])$. For $\beta=\pm1$, the map $\sigma_\beta$ is the restriction of $\rho(\pm1)\in\SO(V)$, which extends over $D(V)$.
\end{proof}

\subsection{Positive sectional curvature}\label{subsec:curvature}
In this subsection, we let $n=\dim V\geq3$. Star spheres are the quotients of star bundles by their free $\star$-action~\cite[Lemma~2.4 and Proposition~3.1]{GG810}. The compatible-disk construction of Deng, Hu and Zhang~\cite[Proposition~5.1]{DHZ}, applied to the reentrance $\sigma_\beta$ of $\beta^{-1}$, together with the gluing theorem of Reiser and Wraith~\cite[Theorem~A(i)]{RW}, gives the following general positive curvature result, noted in~\cite[Remark~4.4]{GG810}.

\begin{theorem}[{\cite[Remark~4.4]{GG810}}]\label{thm:curvature-general}
Every star homotopy sphere admits a smooth Riemannian metric of strictly positive sectional curvature.
\end{theorem}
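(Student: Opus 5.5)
The statement is cited from \cite[Remark~4.4]{GG810}, so the plan is to assemble it from three ingredients already named in the text: Speran\c{c}a's star bundles, the compatible-disk construction of Deng, Hu and Zhang, and the Reiser--Wraith gluing theorem.

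\emph{Step 1: reduce to two disks glued along $S(V)$.} Write $\Sigma_{\rho,\beta}=D(V)\cup_{\sigma_\beta}(-D(V))$ as in \eqref{eq:star-sphere}. By Lemma~\ref{lem:reentrance}(1), $\sigma_\beta=\widehat{\beta^{-1}}$ is the reentrance of the smooth conjugation-equivariant map $\beta^{-1}$. It is therefore enough to produce positively curved metrics on two copies of $D(V)$ whose boundary data match under a reentrance diffeomorphism.

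\emph{Step 2: the compatible-disk construction.} Apply \cite[Proposition~5.1]{DHZ} to the reentrance of $\beta^{-1}$. Realise each disk as a $\star$-quotient of a trivialised piece $D(V)\times G$ of the star bundle; the trivialisations over the two disks are compatible with both actions by \cite[Proposition~3.1]{GG810}. Choose the metrics so that the following holds:
\begin{enumerate}
\item the metric on each disk has positive sectional curvature;
\item near the boundary it is a product-type collar;
\item the induced boundary metric is round (or close to round) on $S(V)$;
\item the second fundamental form of the boundary is positive definite;
\item the diffeomorphism $\sigma_\beta$ is an isometry of the two boundary metrics.
\end{enumerate}
Isometry under $\sigma_\beta$ is where equivariance enters. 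Because $\sigma_\beta(y)=\rho(\beta(y)^{-1})y$ moves each point within its $G$-orbit, one uses a metric on $S(V)$ obtained by a Cheeger-type deformation along the $G$-action. In such a metric the twist acts isometrically, since $\beta$ intertwines $\rho$ with conjugation, which is an isometry of the bi-invariant metric on $G$.

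\emph{Step 3: glue.} Check the Reiser--Wraith hypotheses \cite[Theorem~A(i)]{RW}: the boundary metrics agree under $\sigma_\beta$, and the sum of the second fundamental forms, one taken with respect to the outward normal and one with respect to the inward normal, is positive semidefinite. The theorem then yields a smooth metric of positive sectional curvature on the glued manifold, which is $\Sigma_{\rho,\beta}$.

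\emph{Main obstacle.} The hard step is Step 2, specifically keeping sectional curvature positive on the disk while imposing boundary convexity and the Cheeger-deformed boundary geometry. In \cite{DHZ} this requires controlling the O'Neill terms of the $\star$-submersion. The relevant quantities are bounded in terms of the action fields $K_y=d\rho(\cdot)y$, and in the generality of all $\rho$ one must verify that only compactness and equivariance are used, not specific bounds. I would first follow the argument of \cite[Remark~4.4]{GG810}. There the construction is uniform in $\rho$ after rescaling, because $\sup_y\|K_y\|$ is finite and can be absorbed into the Cheeger parameter. For the representations $\rho_b$ the bound $\|K_y\|\le1$ of Lemma~\ref{lem:action-bound} makes this explicit.
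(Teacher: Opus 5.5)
Your citation chain is the paper's. Theorem~\ref{thm:curvature-general} is obtained there only by applying \cite[Proposition~5.1]{DHZ} to the reentrance $\sigma_\beta=\widehat{\beta^{-1}}$ and then \cite[Theorem~A(i)]{RW}. So if you use that proposition as a black box, your Steps~1 and~3 are complete at the paper's level.

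The problem is the mechanism you put into Step~2 for condition~(5). It is false, and if you rely on it, the isometry hypothesis of \cite{RW} fails. Let $g$ be any $\rho$-invariant metric on $S(V)$ and $g_t$ a Cheeger deformation of it, including $g$ itself. Then $g_t$ changes $g$ only on the orbit directions $K_y(\im\HH)$, and keeps them orthogonal to the $g$-horizontal space $\mathcal H_y$. Since $\sigma_\beta$ maps each orbit onto itself, a $g_t$-isometry $\sigma_\beta$ would have to preserve $\mathcal H$. For $v\in\mathcal H_y$ and $a=\beta(y)$,
\[
 d\sigma_\beta(v)=\rho(a^{-1})v+K_{\sigma_\beta(y)}\eta,\qquad \eta=-a^{-1}\,d\beta_y(v).
\]
The term $\rho(a^{-1})v$ is horizontal, so on the free part this forces $d\beta_y|_{\mathcal H_y}=0$. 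Equivariance gives $d\beta_y(K_y\xi)=\xi a-a\xi$, which has zero real part. Hence $\operatorname{Re}\beta$ would be locally constant on the free part. For $\rho_b$ the free part $S(V_b)\setminus S(\R^3)$ is connected and dense, so $\beta\equiv\varepsilon_\beta=\pm1$ and $\sigma_\beta=\rho(\pm1)$. Your mechanism therefore works only for the standard sphere.

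Equivariance makes $\sigma_\beta$ act on each orbit by a right translation, which is an isometry of the orbit. It does nothing about the shear term $K\eta$. What the compatible-disk construction must supply is two positively curved disks whose boundary metrics correspond under $\sigma_\beta$, that is, $g_1|_{\partial}=\sigma_\beta^*(g_2|_{\partial})$. This cannot be arranged with one common Cheeger-deformed boundary metric.

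Two smaller corrections. First, taken literally, condition~(2) contradicts~(1) and~(4). A product collar $dt^2+h$ has zero curvature on every plane containing $\partial_t$, and its boundary is totally geodesic. The gluing theorem of \cite{RW} needs no collar: only positive curvature on each piece, an isometry of the boundaries and the second-fundamental-form inequality.

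Second, the generality in $\rho$ does not come from rescaling the construction of \cite{GG810}. The paper keeps \cite[Theorem~4.1]{GG810}, here Theorem~\ref{thm:curvature-bounded}, with its hypothesis $\|K_y\|\le2$. It credits \cite[Proposition~5.1]{DHZ}, which weakened the hypotheses of the method, for the statement for every $\rho$. So cite that proposition rather than the rescaling heuristic. For the representations $\rho_b$ you need neither: Lemma~\ref{lem:action-bound} gives $\|K_y\|\le1$, so Theorem~\ref{thm:curvature-bounded} applies directly, which is how the paper proves Corollary~\ref{cor:main.cor.B}.
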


\begin{remark}\label{rem:connected-sums}
Positive sectional curvature is not preserved under connected sum in general. For $n\ge3$, the manifold
$\R P^n\#\R P^n$ has infinite fundamental group, so it carries no metric of positive sectional curvature by Myers'
theorem~\cite[Corollary, p.~404]{Myers}, and the connected sum of sufficiently many copies of $\mathbb{C}P^2$ carries no
metric of nonnegative sectional curvature by Gromov's Betti number theorem~\cite[0.2.A and Corollary, p.~181]{Gromov}.
Star spheres of a fixed representation behave differently. By Lemma~\ref{lem:reentrance}(3), the connected sum of the
star spheres of two maps for $\rho$ is the star sphere of their pointwise product, so it carries a metric of positive
sectional curvature by Theorem~\ref{thm:curvature-general}. Hence, if $\Theta_n$ is generated by star spheres for a
single representation, then every homotopy $n$-sphere carries such a metric. This is the case for $n=7$. The
Gromoll--Meyer sphere generates $\Theta_7\cong\Z/28$~\cite[Theorem~1, p.~270]{DPR} and is a star sphere for the
representation of $S^3$ on $\operatorname{Im}\HH\oplus\HH$ by conjugation (see the introduction), which recovers the
theorem of He, Liu and Yau~\cite{HLY} that every homotopy $7$-sphere carries a metric of positive sectional curvature.
\end{remark}

The construction of~\cite{GG810}, which adapts that of He, Liu and Yau~\cite{HLY}, assumes that the operator norm of the infinitesimal action fields of the representation is bounded above by $2$. For $y\in S(V)$ and $\xi\in\im\HH$, let
\[
 K_y\xi=\frac{d}{dt}\Big|_{t=0}\rho\bigl(e^{t\xi}\bigr)y\in T_yS(V),\qquad
 \|K_y\|=\sup_{|\xi|=1}|K_y\xi|,
\]
with the norm on $\im\HH$ induced from $\HH$ and the round metric on $S(V)$.

\begin{theorem}[{\cite[Theorem~4.1 and Lemma~2.4(3)]{GG810}}]\label{thm:curvature-bounded}
If $\|K_y\|\leq2$ for every $y\in S(V)$, then every star sphere $\Sigma_{\rho,\beta}$ admits a smooth Riemannian metric of strictly positive sectional curvature.
\end{theorem}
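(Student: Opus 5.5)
The plan is to reduce the statement to the two-disk gluing theorem of Reiser and Wraith~\cite[Theorem~A(i)]{RW}, using the description of $\Sigma_{\rho,\beta}$ as $D(V)\cup_{\sigma_\beta}(-D(V))$ from~\eqref{eq:star-sphere}. That theorem needs two metrics $g_1,g_2$ of positive sectional curvature on $D(V)$ satisfying three conditions. First, the boundaries have positive definite second fundamental forms. Second, $\sigma_\beta$ is an isometry of the induced boundary metrics. Third, $\mathrm{II}_1+\sigma_\beta^*\mathrm{II}_2$ is positive definite. Both metrics will be $G$-invariant, so the only real difficulty is the boundary matching, since $\sigma_\beta$ is $G$-equivariant but not an isometry of the round metric.

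To get a metric on the boundary sphere for which $\sigma_\beta$ is an isometry, I would use the star-bundle model, as in~\cite[Lemma~2.4(3)]{GG810}. Consider $S(V)\times G$ with two commuting free actions: the diagonal action $(\rho,\text{left})$ and the $\star$-action by right multiplication. The map $(y,q)\mapsto(\rho(\beta(y))y,\ \beta(y)q)$ intertwines the two ways of identifying the orbit space with $S(V)$, and so realises $\sigma_\beta$ as the map induced by an isometry of a product-type metric. Concretely, I would equip $D(V)\times G$ with a Cheeger deformation of the Euclidean disk metric times a bi-invariant metric, scaled by a small parameter $\varepsilon$ on the $G$-factor. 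I would then push it down to $D(V)$ as a Riemannian submersion metric, once through each of the two trivialisations. By construction, the two induced metrics on $S(V)$ agree under $\sigma_\beta$.

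Curvature positivity of the quotient metrics comes from O'Neill's formula together with the standard Cheeger-deformation estimates. The tangent vectors to the disk contribute positively, and vertical and mixed planes are controlled by $\varepsilon$. The hypothesis $\|K_y\|\leq2$ enters here. The horizontal lift of a vector tangent to $D(V)$ picks up a component $K_y\xi$ along the action fields. To keep the deformed metric's $A$-tensor, and the mixed curvature terms, dominated by the positive curvature of the round factor, one needs $|K_y\xi|$ bounded relative to the bi-invariant norm. With the normalisation of $\operatorname{Im}\HH$ and the constant $2$ as in~\cite{HLY}, I expect this to be exactly the required inequality. I would then bend a collar near the boundary with a warped-product profile $dt^2+f(t)^2 g_t$, choosing $f$ concave, to make the boundary strictly convex while preserving positive curvature. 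This also makes $\mathrm{II}_1+\sigma_\beta^*\mathrm{II}_2$ positive, because both second fundamental forms are positive multiples of the (common) boundary metric.

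I expect the main obstacle to be the mixed-curvature estimate, in which $\|K_y\|\leq2$ must be shown sufficient uniformly in $y$ and independent of $\beta$. My first attempt would be to follow the computation of~\cite[Theorem~4.1]{GG810}. That is, I would write the curvature of planes spanned by horizontal lifts as the round curvature minus $\tfrac34|A|^2$, and check that the bound $|K_y\xi|\le2|\xi|$ makes the resulting quadratic form positive for small $\varepsilon$.
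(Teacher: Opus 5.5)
\paragraph{The gap: boundary matching.} Your top-level reduction is the right one, and it is how the statement is obtained. The paper does not reprove it but imports it from \cite[Theorem~4.1 and Lemma~2.4(3)]{GG810}. That construction adapts \cite{HLY}: it builds two positively curved metrics on $D(V)$ meeting the hypotheses of the Reiser--Wraith gluing theorem, and it assumes $\|K_y\|\le2$. The gap is in the step you call the only real difficulty and then treat as automatic.

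The map $(y,q)\mapsto(\hat\beta(y),\beta(y)q)$ is not an isometry of $g_{D(V)}+\varepsilon g_{\mathrm{bi}}$, nor of any product-type metric, since it sends $(X,0)$ to $(d\hat\beta(X),d\beta(X)q)$. The same holds for the actual star-bundle transition $(y,q)\mapsto(y,\beta(y)q)$. Neither reading of ``push down once through each trivialisation'' works.
\begin{itemize}
\item If both disks receive the same push-down, you need $\sigma_\beta$ to be an isometry of a single $G$-invariant metric $h$ on $S(V)$. Since $\sigma_\beta$ maps every orbit to itself, it would have to preserve $h$-horizontal spaces. The formula $d\Phi_x=\rho(a)(I_V+K\xi)$ from the proof of Lemma~\ref{lem:reentrance}(1), applied to $\beta^{-1}$, shows that $d\sigma_\beta$ is $\rho(\beta(y)^{-1})$ plus a vertical term vanishing exactly on $\ker d\beta_y$. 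So $d\beta_y$ would vanish on horizontal vectors. Then $\operatorname{rank}d\beta_y\le2$ at free points, because $d\beta_y\circ K_y=a(\mathrm{Ad}_{a^{-1}}-I)$ with $a=\beta(y)$, which kills the axis of $a$. Consequently $\beta\circ s_{w_0}$ would miss a point and $k(\beta)=0$, which by Theorem~\ref{thm:ek} rules out every exotic example.
\item If instead the second disk receives the push-down through the $\beta$-twisted trivialisation, that trivialisation must exist on the whole chart. This means $\beta$ must extend smoothly and equivariantly over all of $D(V)$. Then $\beta(t\,\cdot)$, $0\le t\le1$, is an equivariant homotopy to the constant $\beta(0)=\pm1$, and $\Sigma_{\rho,\beta}$ is standard by Lemma~\ref{lem:reentrance}(3).
\end{itemize}

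\paragraph{What is missing.} The substance of the theorem is absent from your sketch. You need a metric on the second disk that is smooth at the centre, has $\sec>0$ and strictly convex boundary, and whose boundary metric is the $\sigma_\beta$-transport of the first. The twist by $\beta$ cannot be carried to the centre, so it must be switched off in between. That interpolation is what the cited construction does, and it is where $\|K_y\|\le2$ is used.

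A symptom of the omission: for an honest Cheeger deformation of the round disk, O'Neill's formula gives $\sec>0$ for every isometric action, with no bound on $K_y$. So the estimates you describe would never need the hypothesis; you only conjecture its role.

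The collar step has the same defect. The slice $\{t\}$ of $dt^2+f(t)^2g_t$ has second fundamental form $ff'g_t+\tfrac12f^2\dot g_t$, which is a multiple of the boundary metric only if $g_t$ is frozen. Even a frozen collar over $(\sigma_\beta)_*h$ on the second disk still has to be joined to the interior, which is the unresolved problem again. As written, the proposal establishes the statement only insofar as it defers to \cite[Theorem~4.1]{GG810}, which is exactly what the paper does.
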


The representations $\rho_b$ of Section~\ref{subsec:semifree} satisfy this bound (see Lemma~\ref{lem:action-bound}). 

\subsection{The Eells--Kuiper invariant}\label{subsec:prelim-ek}
Let $\Sigma$ be an oriented homotopy $(4m-1)$-sphere, $m\geq2$, that bounds a compact spin manifold $W$. Since $\Sigma$ is a homotopy sphere, the homomorphism $j\colon H^{4i}(W,\Sigma;\Q)\to H^{4i}(W;\Q)$ induced by the inclusion is an isomorphism for $0<i<m$. Hence each Pontryagin class $p_i(W)$, $0<i<m$, is the image under $j$ of a unique class $j^{-1}(p_i(W))\in H^{4i}(W,\Sigma;\Q)$~\cite[p.~94]{EK}, and Eells and Kuiper~\cite[(6) and~(7), p.~97]{EK} define
\[
 \mu(\Sigma)=\frac{N_m[W]+t_m\,\tau[W]}{a_m}\mod 1,
\]
where $N_m=\Ahat_m-t_mL_m$ is the combination of the $\Ahat$- and $L$-polynomials in which $p_m$ cancels, so that it involves only $p_1,\dots,p_{m-1}$; $t_m=-1/\bigl(2^{2m+1}(2^{2m-1}-1)\bigr)$~\cite[(9), p.~99]{EK}; $a_m=1$ for $m$ even and $a_m=2$ for $m$ odd; and $\tau[W]$ is the signature of $W$, that is, the signature of the symmetric bilinear form $(\alpha,\alpha')\mapsto\langle\alpha\smile\alpha',[W,\Sigma]\rangle$ on $H^{2m}(W,\Sigma;\RR)$~\cite[p.~965]{MilnorDiffSpheres}. The value does not depend on $W$~\cite[Theorem, p.~97]{EK}. In Section~\ref{sec:ek} we use this definition with the notation of Definition~\ref{def:ek-mu}, where $\sgn(W)=\tau[W]$.

The standard sphere bounds a disk, and the star spheres we consider bound spin manifolds constructed in Section~\ref{sec:ek}. We use the following properties: $\mu$ is an invariant of oriented diffeomorphism types, $\mu(S^{4m-1})=0$, and $\mu(-\Sigma)=-\mu(\Sigma)$~\cite[(6) and Theorem, p.~97; Remark, p.~99]{EK}. In particular, a homotopy sphere $\Sigma$ with $\mu(\Sigma)\neq0$ is not diffeomorphic to the standard sphere, that is, it is exotic.

\subsection{The semifree representations}\label{subsec:semifree}
For $b\geq1$, let
\[
 V_b=\R^3\oplus\HH^b,\qquad \rho_b(q)(p,w)=(p,qw)\qquad(p\in\R^3,\ w=(w_1,\dots,w_b)\in\HH^b),
\]
where $qw=(qw_1,\dots,qw_b)$. We orient $\HH$ by the basis $1,i,j,k$ and $V_b$ by the product orientation. Then $\dim V_b=4b+3$. Since left multiplication is free on $\HH^b\setminus0$, the action of $G$ on $S(V_b)$ fixes the $2$-sphere $S(\R^3)$ and is free on its complement. The unit sphere is the join $S(V_b)=S(\R^3)*S(\HH^b)$, and $S(\HH^b)\to S(\HH^b)/G=\HP^{b-1}$ is the quaternionic Hopf bundle.

\begin{lemma}\label{lem:action-bound}
For every $b\geq1$ and every $y\in S(V_b)$, we have $\|K_y\|\leq1$.
\end{lemma}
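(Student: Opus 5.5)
We compute $K_y$ directly from the formula for $\rho_b$ and bound its norm pointwise. Write $y=(p,w)\in S(V_b)$ with $p\in\R^3$, $w=(w_1,\dots,w_b)\in\HH^b$ and $|p|^2+|w|^2=1$. Since $\rho_b(e^{t\xi})(p,w)=(p,e^{t\xi}w)$, differentiating at $t=0$ gives
\[
 K_y\xi=(0,\xi w)=(0,\xi w_1,\dots,\xi w_b)\in T_yS(V_b)\subset V_b .
\]
This vector is tangent to the sphere, because $\langle \xi w_l,w_l\rangle=\operatorname{Re}(\xi w_l\bar w_l)=|w_l|^2\operatorname{Re}\xi=0$ for $\xi\in\im\HH$. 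The round metric on $S(V_b)$ is the restriction of the Euclidean metric of $V_b$, so $|K_y\xi|$ is just the Euclidean norm of this vector.

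The quaternionic norm is multiplicative, so $|\xi w_l|=|\xi|\,|w_l|$, and therefore
\[
 |K_y\xi|^2=\sum_{l=1}^b|\xi|^2|w_l|^2=|\xi|^2|w|^2 .
\]
Taking the supremum over $|\xi|=1$ gives $\|K_y\|=|w|=\sqrt{1-|p|^2}\leq1$, with equality exactly on $S(\HH^b)$.

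There is no real obstacle. The only point needing care is to confirm two conventions: that the norm on $\im\HH$ is the one induced from $\HH$, so that $|e^{t\xi}|=1$ and the derivative is $\xi w$, and that the round metric is the ambient Euclidean one. Both hold by the definitions in Section~\ref{subsec:curvature}. In particular the hypothesis $\|K_y\|\leq2$ of Theorem~\ref{thm:curvature-bounded} is satisfied.
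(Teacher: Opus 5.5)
Your proof is correct and follows the paper's argument step for step: you compute $K_y\xi=(0,\xi w)$, check tangency via $\operatorname{Re}(\xi)\,|w|^2=0$, and use multiplicativity of the quaternionic norm to get $\|K_y\|=|w|\le 1$, with equality exactly when $p=0$. One small quibble: $|e^{t\xi}|=1$ holds for every $\xi\in\im\HH$ whatever norm is chosen on $\im\HH$; that norm matters only for the supremum over $|\xi|=1$.
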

This was observed for $b=2$ and $b=3$ in~\cite[\S5.3]{DHZ} and~\cite[(7.1)]{CDZ}.

\begin{proof}
Let $y=(p,w)\in S(V_b)$, so that $|p|^2+|w|^2=1$, and let $\xi\in\im\HH$. The map $t\mapsto e^{t\xi}$ is a curve of unit quaternions with derivative $\xi$ at $t=0$, and $\rho_b(e^{t\xi})y=(p,e^{t\xi}w)$. Since right multiplication by $w_k$ is linear on $\HH$, differentiating at $t=0$ gives
\[
 K_y\xi=(0,\xi w),\qquad \xi w=(\xi w_1,\dots,\xi w_b).
\]
This vector is tangent to $S(V_b)$ at $y$, since $\langle(0,\xi w),(p,w)\rangle=\operatorname{Re}\sum_k\xi w_k\bar w_k=\operatorname{Re}(\xi)\,|w|^2=0$, because $\xi$ is purely imaginary. The norm of quaternions is multiplicative, so $|\xi w_k|=|\xi|\,|w_k|$ for each $k$, and therefore
\[
 |K_y\xi|^2=\sum_k|\xi|^2|w_k|^2=|\xi|^2|w|^2 .
\]
Hence $|K_y\xi|=|\xi|\,|w|$. Taking the supremum over $|\xi|=1$ gives $\|K_y\|=|w|\leq1$, with equality if and only if $p=0$.
\end{proof}

By Lemma~\ref{lem:action-bound} and Theorem~\ref{thm:curvature-bounded}, every star sphere for $\rho_b$ admits a metric of strictly positive sectional curvature. The case $b=1$ gives only the standard $7$-sphere (Remark~\ref{rem:b-one}), and we assume $b\geq2$ from Section~\ref{sec:ek} on.

\begingroup\renewcommand{\theenumi}{\roman{enumi}}
\section{The Eells--Kuiper invariant of semifree star spheres}\label{sec:ek}

In this section, we compute the Eells--Kuiper invariant of every star sphere of the
semifree representations
\[
  V_b=\RR^3\oplus\HH^b,\qquad \rho_b(q)(p,w)=(p,qw),\qquad b\ge 2,
\]
and prove Theorem~\ref{thm:main.thm.C}.
The proof relies on the fact that the group $G=S^3$ acts on $\Sigma_{\rho_b,\beta}$ with a $3$-sphere of fixed
points and is free elsewhere. We will show that, outside an open invariant tubular neighbourhood of the fixed
sphere, the action is a principal $S^3$-bundle over $D^4\times\HP^{b-1}$ (Lemma~\ref{lem:ek-twisted} and
Proposition~\ref{prop:ek-filling}(v)), and that its mapping cylinder, closed off over the fixed sphere by a
copy of $D^4\times\HP^b$, is a spin manifold $Z$ with boundary $\Sigma_{\rho_b,\beta}$
(Proposition~\ref{prop:ek-filling}).
The rest of the proof computes the characteristic numbers of $Z$. For $b=2$ and
$b=3$ this filling is the one used by Deng, Hu and Zhang~\cite{DHZ} and by Chen, Deng and
Zhang~\cite{CDZ}, respectively, and our computation recovers their results.

\subsection{Conventions and statement}\label{subsec:ek-statement}

Throughout, let $b\ge2$, $n=4b+3=\dim V_b$ and $m=b+1$, so that $\Sigma_{\rho_b,\beta}$ is a
homotopy $(4m-1)$-sphere. We write $\rho$ for $\rho_b$ when $b$ is fixed.

\emph{Orientations.} The space $\HH$ is oriented by the basis $1,i,j,k$; $\RR^3$, $\HH^b$, $V_b$ and
$\RR\oplus V_b$ carry the product orientations, in the order written. The unit sphere of an
oriented Euclidean space, and more generally the boundary of an oriented manifold, is
oriented by the outward normal first~\cite[Proposition~15.24, p.~386]{Lee}. In particular
$S^3\subset\HH$ is oriented as the boundary of the unit disk.

\emph{The star sphere.} Recall from~\eqref{eq:star-sphere} that
$\Sigma_{\rho,\beta}=D(V_b)\cup_{\sigma_\beta}\bigl(-D(V_b)\bigr)$, with attaching diffeomorphism
$\sigma_\beta(y)=\rho(\beta(y)^{-1})y$. In coordinates $y=(p,w)$,
\begin{equation}\label{eq:ek-sigma}
  \sigma_\beta(p,w)=\bigl(p,\ \beta(p,w)^{-1}w\bigr).
\end{equation}
Since $\sigma_\beta$ commutes with $\rho$, the linear actions on the two disks give a smooth
action of $G$ on $\Sigma_{\rho,\beta}$ (see Lemma~\ref{lem:reentrance}(1)).

\emph{Quaternionic lines.} Throughout this section, $G$ acts on $\HH^b$ by left multiplication. A \emph{line} is a left $\HH$-submodule $\ell=\HH w$ of $\HH^b$,
$w\ne0$, and $\HP^{b-1}$ is the space of lines.  The space $\HP^{b-1}$ is the orbit space $S(\HH^b)/G$ of the
action $q\cdot w=qw$. The tautological bundle $\gamma=\gamma_{b-1}$ has fibre $\ell$ over
$\ell$. A line $\ell$ carries the \emph{canonical orientation}, the image of the orientation of
$\HH$ under $h\mapsto hw$ for any $w\in\ell\setminus0$. This orientation does not depend on $w$. If
$w'=cw$ with $c\in\HH\setminus0$, then $h\mapsto hw'$ is $h\mapsto hw$ composed with right multiplication by $c$,
which preserves the orientation of $\HH$, since its determinant is $|c|^4>0$. Left multiplication
by $i$ makes $\gamma$ a complex vector bundle of rank two, and its complex orientation, given
by the real basis $w,iw,jw,ijw=kw$, is the canonical one. Let
\[
  x=c_2(\gamma)=e(\gamma)\in H^4(\HP^{b-1};\ZZ).
\]
Here $e$ denotes the Euler class of an oriented real vector bundle~\cite[\S9]{MS}, and $c_2$ the second
Chern class of a complex vector bundle~\cite[\S14]{MS}. For a complex vector bundle of rank two, with its
complex orientation, the top Chern class is the Euler class, $c_2=e$~\cite[\S14, Definition, p.~158]{MS}.
Every quaternionic line bundle below is oriented canonically, so its Euler class is its second Chern class.

\emph{Slices.} For a smooth action of $G$ on a manifold $X$ and a point $x\in X$, a subset $S\subset X$
containing $x$ is a \emph{slice} at $x$ if $G_x(S)=S$ and the map $G\times_{G_x}S\to X$, $[g,s]\mapsto gs$, is
an equivariant embedding onto an open neighbourhood of the orbit $Gx$~\cite[Chapter~II, Definition~4.1, p.~82]{Bredon}.
Fix $w_0\in S(\HH^b)$, and let $\HH w_0$ be the quaternionic line spanned by $w_0$. The subspace
$\RR^3\oplus\HH w_0$ of $V_b$ is invariant, since $\rho_b(q)(p,hw_0)=(p,(qh)w_0)$. On its unit sphere, a
$6$-sphere, $G$ fixes $S(\RR^3)$ and acts freely elsewhere. Let
\[
  D_{w_0}=\bigl\{\bigl(p,(1-|p|^2)^{1/2}w_0\bigr):p\in\RR^3,\ |p|<1\bigr\}.
\]

\begin{lemma}[The slice]\label{lem:ek-slice}
Let $w_0\in S(\HH^b)$. The map
\[
  \Psi\colon G\times D_{w_0}\longrightarrow S(\RR^3\oplus\HH w_0)\setminus S(\RR^3),\qquad
  \Psi\bigl(q,(p,tw_0)\bigr)=(p,tqw_0),
\]
is a $G$-equivariant diffeomorphism, where $G$ acts on $G\times D_{w_0}$ by left multiplication on the first factor.
Its inverse is $(p,hw_0)\mapsto\bigl(h/|h|,(p,|h|w_0)\bigr)$. In particular, $D_{w_0}$ is a slice at each of its
points for the action of $G$ on $S(\RR^3\oplus\HH w_0)$, and every point of $S(\RR^3\oplus\HH w_0)\setminus S(\RR^3)$
is $\rho_b(q)y$ for exactly one $q\in G$ and $y\in D_{w_0}$.
\end{lemma}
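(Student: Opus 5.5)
The plan is to verify directly that $\Psi$ lands in the right set, is smooth and equivariant, and that the stated formula gives a smooth two-sided inverse. The slice statement then follows from the definition in~\cite[Chapter~II, Definition~4.1]{Bredon}, since the isotropy groups on $D_{w_0}$ are trivial.

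\emph{Well-definedness, smoothness and equivariance.} For $(q,(p,tw_0))$ with $t=(1-|p|^2)^{1/2}>0$, the image $(p,tqw_0)$ has norm squared $|p|^2+t^2|q|^2|w_0|^2=1$. Its $\HH^b$-component $tqw_0=(tq)w_0$ lies in $\HH w_0$ and is nonzero, so it avoids $S(\RR^3)$. Smoothness is clear, because $\Psi$ is the restriction of a polynomial map composed with the smooth parametrisation $p\mapsto(p,(1-|p|^2)^{1/2}w_0)$ of $D_{w_0}$ by the open unit ball. Equivariance is the identity $\rho_b(q')(p,tqw_0)=(p,t(q'q)w_0)=\Psi(q'q,(p,tw_0))$.

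\emph{Inverse.} A point of $S(\RR^3\oplus\HH w_0)\setminus S(\RR^3)$ has the form $(p,hw_0)$. Here $h\in\HH$ is uniquely determined, since $h\mapsto hw_0$ is injective: $|hw_0|=|h|$. We have $h\neq0$, and $|p|^2+|h|^2=1$, so $|h|=(1-|p|^2)^{1/2}$ and $|p|<1$. Thus the map $(p,hw_0)\mapsto(h/|h|,(p,|h|w_0))$ is defined and lands in $G\times D_{w_0}$. It is smooth, because $h$ depends linearly on $hw_0$ (for instance $h=\langle\cdot\rangle$ extracted by right multiplication by $\bar w_0$ in any coordinate where $w_0$ is nonzero, or more invariantly $h=\sum_k (hw_0)_k\bar w_{0,k}$) and $|h|\neq0$. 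Composing both ways gives the identity. One direction is $\Psi(h/|h|,(p,|h|w_0))=(p,hw_0)$. The other is $tqw_0\mapsto(q,(p,tw_0))$, using $|tq|=t$.

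\emph{Slice.} Since $\Psi$ is an equivariant diffeomorphism onto an open invariant subset, for $y\in D_{w_0}$ we have $G_y=\{1\}$ by freeness. The map $G\times_{G_y}D_{w_0}=G\times D_{w_0}\to X$ is then exactly $\Psi$, an equivariant embedding onto an open neighbourhood of $Gy$. The uniqueness statement is the bijectivity of $\Psi$. No real obstacle is expected; the only point needing care is the smooth extraction of $h$ from $hw_0$, which the formula $h=\sum_k (hw_0)_k\bar w_{0,k}$ handles using $|w_0|=1$.
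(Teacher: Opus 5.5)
Your proposal is correct and follows the paper's proof essentially step for step. You check well-definedness, smoothness and equivariance of $\Psi$, verify both composites with the stated inverse, and get the slice property from Bredon's definition using trivial stabilisers; your formula $h=\sum_k (hw_0)_k\,\bar w_{0,k}$ makes precise the smoothness of the inverse, which the paper only asserts, and the one point you leave implicit is that the target is open because $S(\RR^3)$ is closed.
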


\begin{proof}
For $(p,tw_0)\in D_{w_0}$ we have $t=(1-|p|^2)^{1/2}>0$. Since $|q|=1$, the point $\Psi(q,(p,tw_0))$ has norm $1$
and a nonzero $\HH w_0$-component, so $\Psi$ is well defined. It is smooth, and
$\rho_b(q')\Psi(q,y)=\Psi(q'q,y)$, so it is equivariant. On $S(\RR^3\oplus\HH w_0)\setminus S(\RR^3)$ the coefficient
$h$ of $w_0$ is nonzero, and $|h|=(1-|p|^2)^{1/2}$, so the stated inverse is well defined and smooth. The two
composites are the identity, since $\Psi\bigl(h/|h|,(p,|h|w_0)\bigr)=(p,hw_0)$, and
$\bigl(tq/|tq|,(p,|tq|w_0)\bigr)=\bigl(q,(p,tw_0)\bigr)$ because $t>0$ and $|q|=1$. The action of $G$ is free off
$S(\RR^3)$, so the stabiliser $G_y$ is trivial for $y\in D_{w_0}$. By
definition~\cite[Chapter~II, Definition~4.1, p.~82]{Bredon}, it then suffices that $\Psi$ is an equivariant
embedding onto an open neighbourhood of each orbit $Gy$. Its image is open, since $S(\RR^3)$ is closed.
\end{proof}

\begin{definition}[Slice degree]\label{def:ek-slice-degree}
For $w_0\in S(\HH^b)$ let
\[
  s_{w_0}\colon D^3\longrightarrow S(V_b),\qquad s_{w_0}(p)=\bigl(p,(1-|p|^2)^{1/2}w_0\bigr),
\]
where $D^3\subset\RR^3$ is the closed unit disk with its standard orientation. The map $s_{w_0}$ is a
diffeomorphism from the interior of $D^3$ onto the slice $D_{w_0}$, and it sends $\partial D^3$ into the fixed
sphere $S(\RR^3)$. The \emph{slice degree} of $\beta$ is
\[
  k(\beta)=\deg\bigl(\beta\circ s_{w_0}\colon(D^3,\partial D^3)\to(S^3,\varepsilon_\beta)\bigr)\in\ZZ,
\]
where $\varepsilon_\beta\in\{\pm1\}$ is the constant value of $\beta$ on $S(\RR^3)$
(see Lemma~\ref{lem:ek-slice-degree}(i)).
\end{definition}

We use Lemma~\ref{lem:ek-slice} in two ways. First, $\beta(\rho_b(q)y)=q\,\beta(y)\,q^{-1}$ by equivariance. So, by
Lemma~\ref{lem:ek-slice}, the restriction of $\beta$ to the invariant $6$-sphere $S(\RR^3\oplus\HH w_0)$ is
determined by $\beta\circ s_{w_0}$ on $D^3$, whose boundary $s_{w_0}$ maps onto the fixed sphere $S(\RR^3)$. The slice
degree is the degree of this map, and Lemma~\ref{lem:ek-slice-degree} shows that it is an invariant of the class of
$\beta$. Second, $s_{w_0}$ is the parametrisation of the slice in which we compute. It gives the attaching map on the
slice (Lemma~\ref{lem:ek-slice-degree}(iv)), the map $g$ of the normal form (Section~\ref{subsec:ek-normal}), the
Euler number in Lemma~\ref{lem:ek-euler}, and the realisation of degrees (Lemma~\ref{lem:realisation}).

\begin{lemma}\label{lem:ek-slice-degree}
Let $\beta,\beta'\colon S(V_b)\to G_{\mathrm{conj}}$ be smooth and equivariant.
\begin{enumerate}
\item $\beta$ is constant on $S(\RR^3)$, with value $\varepsilon_\beta\in\{\pm1\}$.
\item $k(\beta)$ does not depend on $w_0$, and depends only on the class $[\beta]\in E(\rho_b)$ (Section~\ref{sec:prelim}).
\item $k(\beta\beta')=k(\beta)+k(\beta')$ and $k(-\beta)=k(\beta)$. Thus $k\colon E(\rho_b)\to\ZZ$
  is a homomorphism, and $k(\beta^{-1})=-k(\beta)$.
\item For $|p|<1$, the quaternion $\beta(s_{w_0}(p))^{-1}$ is the unique $q\in G$ with
  $\sigma_\beta(s_{w_0}(p))=\rho(q)s_{w_0}(p)$. Thus $-k(\beta)$ is the degree of the
  rotation by which the attaching map moves the slice.
\end{enumerate}
\end{lemma}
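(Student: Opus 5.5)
We prove the four items in order, using only equivariance and Lemma~\ref{lem:ek-slice}.

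\emph{Item (i).} Points $y\in S(\RR^3)$ are fixed by $G$. Equivariance therefore gives $\beta(y)=q\beta(y)q^{-1}$ for all $q\in G$, so $\beta(y)$ lies in the centre $\{\pm1\}$ of $S^3$. Since $S(\RR^3)$ is connected and $\beta$ is continuous, $\beta$ is constant there, with value $\varepsilon_\beta$. This makes the degree in Definition~\ref{def:ek-slice-degree} well defined. The map $\beta\circ s_{w_0}$ sends $\partial D^3$ to the point $\varepsilon_\beta$, so it factors through $D^3/\partial D^3\cong S^3$, and $k(\beta)$ is the degree of the induced map $S^3\to S^3$.

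\emph{Item (ii).} For independence of $w_0$, note that $S(\HH^b)$ is connected for $b\ge1$. Choose a path $w_t$ from $w_0$ to $w_1$. Then $\beta\circ s_{w_t}$ is a homotopy of maps of pairs $(D^3,\partial D^3)\to(S^3,\varepsilon_\beta)$, since the boundary always lands in $S(\RR^3)$, where $\beta\equiv\varepsilon_\beta$. Degrees of such maps are homotopy invariant, so $k$ does not depend on $w_0$. For invariance under the class of $\beta$, let $H$ be an equivariant homotopy from $\beta$ to $\beta'$. By item (i) applied to each $H(\cdot,t)$, the value $H(y,t)$ for $y\in S(\RR^3)$ lies in $\{\pm1\}$, and it is constant in $t$ by connectedness. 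Hence $H\circ(s_{w_0}\times\id)$ is a homotopy of pairs, so $k(\beta)=k(\beta')$. (Continuous representatives are fine here, since degree makes sense for continuous maps.)

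\emph{Item (iii).} To compute $k(\beta\beta')$, compose $\beta\circ s$ and $\beta'\circ s$, viewed as maps $S^3\to S^3$ after collapsing the boundary, with pointwise multiplication in $S^3$. The standard fact is that for maps $f,g\colon X\to S^3$ into a topological group, $\deg(fg)=\deg f+\deg g$. The proof: $fg$ is homotopic to the loop-sum, or equivalently, on $H_3$ the map $\mu_*\circ(f,g)_*$ gives $f_*+g_*$, because multiplication restricted to $S^3\vee S^3$ is the fold map. The base values $\varepsilon_\beta,\varepsilon_{\beta'}$ multiply to a constant, so this is compatible with the pairs. For $k(-\beta)=k(\beta)$, observe that $-\beta=(-1)\cdot\beta$, where $-1$ is a constant equivariant map of slice degree $0$, so additivity gives the claim. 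Alternatively, left multiplication by $-1$ is homotopic to the identity, so it has degree $1$. With $k(1)=0$, additivity gives $k(\beta^{-1})=-k(\beta)$ and shows that $k$ is a homomorphism on $E(\rho_b)$, using item (ii).

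\emph{Item (iv).} By \eqref{eq:ek-sigma},
\[
  \sigma_\beta(s_{w_0}(p))=\bigl(p,\beta(s_{w_0}(p))^{-1}tw_0\bigr)=\rho\bigl(\beta(s_{w_0}(p))^{-1}\bigr)s_{w_0}(p).
\]
Uniqueness of $q$ follows from freeness of the action off $S(\RR^3)$, or from Lemma~\ref{lem:ek-slice}. Then $p\mapsto\beta(s_{w_0}(p))^{-1}$ has degree $-k(\beta)$. This holds because inversion on $S^3$ has degree $-1$: it is $h\mapsto\bar h$, which is a reflection in three coordinates. Alternatively, $\deg(f^{-1})=-\deg f$ follows from item (iii)'s additivity, since $ff^{-1}=1$.

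The main obstacle is the additivity of degree under pointwise product in item (iii). It requires the argument via the $H$-space structure of $S^3$, together with care that the relative pairs and base points match up.
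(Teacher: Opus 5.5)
Your proposal is correct and follows essentially the same route as the paper. Items (i) and (ii) use the centre of $S^3$ and connectedness. Item (iii) uses the computation on $H_3$ through the diagonal and the multiplication map; you handle the value $-1$ on $S(\RR^3)$ by additivity with the constant map $-1$, or by $L_{-1}\simeq\mathrm{id}$, instead of first reducing to $\varepsilon_\beta=\varepsilon_{\beta'}=1$ as the paper does. Item (iv) follows from \eqref{eq:ek-sigma} and freeness. The only slip is cosmetic: in (iv) the undefined $t$ should be $(1-|p|^2)^{1/2}$.
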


\begin{proof}
(i) Suppose $y\in S(\RR^3)$. Then $\rho(q)y=y$ for every $q$, so $\beta(y)=q\beta(y)q^{-1}$ and
$\beta(y)$ lies in the centre $\{\pm1\}$ of $G$. As $S(\RR^3)$ is connected, $\beta$ is
constant there.

(ii) Let $w_1\in S(\HH^b)$. Since $S(\HH^b)$ is connected, there is a path $w_t$, $0\le t\le1$, from $w_0$
to $w_1$. The maps $\beta\circ s_{w_t}$ form a homotopy $D^3\times[0,1]\to S^3$, and each of them sends
$\partial D^3$ to $\varepsilon_\beta$, because $s_{w_t}(\partial D^3)\subset S(\RR^3)$. So $\beta\circ s_{w_0}$ and $\beta\circ s_{w_1}$
are homotopic as maps of pairs $(D^3,\partial D^3)\to(S^3,\varepsilon_\beta)$, and have the same degree. Now let
$\beta_t$, $0\le t\le1$, be an equivariant homotopy from $\beta$ to $\beta'$. By~(i), each $\beta_t$ is constant on $S(\RR^3)$, with value
$\varepsilon_{\beta_t}\in\{\pm1\}$. For a fixed $y\in S(\RR^3)$, the path $t\mapsto\beta_t(y)$ is continuous
with values in the discrete set $\{\pm1\}$, so it is constant, and $\varepsilon_{\beta_t}=\varepsilon_\beta$ for every $t$.
Hence $\beta_t\circ s_{w_0}$ is a homotopy of maps of pairs $(D^3,\partial D^3)\to(S^3,\varepsilon_\beta)$, and
$k(\beta)=k(\beta')$.

(iii) Recall that $\varepsilon_\beta$ is the value of $\beta$ on $S(\RR^3)$. Left multiplication by $-1$ is an
orientation-preserving isometry of $S^3$ that sends $\varepsilon_\beta$ to $-\varepsilon_\beta=\varepsilon_{-\beta}$, so
$k(-\beta)=k(\beta)$. As $\varepsilon_{\beta\beta'}=\varepsilon_\beta\varepsilon_{\beta'}$, we may therefore assume
$\varepsilon_\beta=\varepsilon_{\beta'}=1$. Let $u=\beta\circ s_{w_0}$ and $v=\beta'\circ s_{w_0}$, both maps of pairs
$(D^3,\partial D^3)\to(S^3,1)$. Their pointwise product is $uv=(\beta\beta')\circ s_{w_0}$. Regard $u$, $v$ and $uv$ as
maps $D^3/\partial D^3=S^3\to S^3$. Then $uv=\nu\circ(u\times v)\circ\Delta$, where $\Delta$ is the diagonal and
$\nu\colon S^3\times S^3\to S^3$ is the multiplication. By the K\"unneth theorem,
\[
  H_3(S^3\times S^3;\ZZ)=H_3(S^3)\otimes H_0(S^3)\oplus H_0(S^3)\otimes H_3(S^3).
\]
Both projections composed with $\Delta$ are the identity, so $\Delta$ maps $[S^3]$ to
$[S^3]\otimes[\mathrm{pt}]+[\mathrm{pt}]\otimes[S^3]$. The map $u\times v$ sends this to
$\deg u\cdot[S^3]\otimes[\mathrm{pt}]+\deg v\cdot[\mathrm{pt}]\otimes[S^3]$, and $\nu$ sends each summand to $[S^3]$ times
its coefficient, because $\nu(g,1)=\nu(1,g)=g$. Hence $\deg(uv)=\deg u+\deg v$, that is,
$k(\beta\beta')=k(\beta)+k(\beta')$. Taking $\beta'=\beta^{-1}$ gives the last claim.

(iv) This is \eqref{eq:ek-sigma}, together with the freeness of left multiplication on
$\HH^b\setminus0$.
\end{proof}

When $\beta$ equals $1$ near $S(\RR^3)$, the map $p\mapsto\bigl(w\mapsto\beta(p,(1-|p|^2)^{1/2}w)\,w\bigr)$ is a
family of automorphisms of the Hopf bundle $S(\HH^b)\to\HP^{b-1}$ covering the identity, and $k(\beta)$ is
the degree of its evaluation at $w_0$. This is the degree of $p\mapsto b(p,w_0)$
in~\cite[(3.4)]{DHZ}, and the evaluation degree of~\cite[\S3.1]{CDZ}. Lemma~\ref{lem:ek-euler} gives a second intrinsic
description, namely that $-k(\beta)$ is the second Chern number, over a $4$-sphere $S^4\times\{\ell\}$, of a quaternionic line bundle
that extends the bundle associated with the free part of the action on $\Sigma_{\rho,\beta}$ (Section~\ref{subsec:ek-filling}).

\emph{The numbers.} We recall the definition of a multiplicative sequence~\cite[pp.~219--220]{MS}. Let
$A=\bigoplus_{n\ge0}A^n$ be a commutative graded $\QQ$-algebra with unit, and let $A^\Pi$ be the ring of formal sums
$a=a_0+a_1+a_2+\cdots$ with $a_n\in A^n$. A sequence of polynomials $K_1(x_1),K_2(x_1,x_2),\dots$ with rational
coefficients, with $K_n$ homogeneous of degree $n$ when $x_i$ has degree $i$, is a \emph{multiplicative sequence} if
$K(ab)=K(a)K(b)$ for all such $A$ and all $a,b\in A^\Pi$ with leading term $1$. Here
\[
  K(a)=1+K_1(a_1)+K_2(a_1,a_2)+\cdots .
\]
For each power series $Q(z)=1+\lambda_1z+\lambda_2z^2+\cdots$ there is exactly one multiplicative sequence with
$K(1+z)=Q(z)$~\cite[Lemma~19.1, p.~221]{MS}. If the total
Pontryagin class of a real vector bundle is written formally as $\prod_i(1+z_i)$, then
$K(p)=\sum_jK_j(p_1,\dots,p_j)=\prod_iQ(z_i)$; $Q$ is the \emph{characteristic power series} of the sequence.
The $L$-genus is the multiplicative sequence with characteristic power series
$Q_L(z)=\sqrt z/\tanh\sqrt z$~\cite[Theorem~19.4, p.~224]{MS}, and the $\Ahat$-genus the one with
$Q_{\Ahat}(z)=\tfrac12\sqrt z/\sinh\tfrac12\sqrt z$~\cite[p.~278]{AtiyahHirzebruch}. We write $L_m$ and $\Ahat_m$ for their
components of degree $4m$, as polynomials in $p_1,\dots,p_m$.

Let $a_m=2$ for $m$ odd and $a_m=1$ for $m$ even, and
\[
  t_m=-\frac{1}{2^{2m+1}\,(2^{2m-1}-1)} .
\]
For a power series
$Q(U)$ with $Q(0)=1$, let
\begin{equation}\label{eq:ek-FQ}
  F_Q(b)=[U^{b+1}]\,\frac{Q(U)^{2b+2}}{Q(4U)}-\frac{1}{b+1}\,[U^{b}]\,\frac{\bigl(Q(U)^{2b+2}\bigr)'}{Q(4U)},
\end{equation}
where $[U^j]$ denotes the coefficient of $U^j$ and the prime is $d/dU$, and set
$F_L(b)=F_{Q_L}(b)$, $F_{\Ahat}(b)=F_{Q_{\Ahat}}(b)$ and
\[
  r_b=\frac{F_{\Ahat}(b)-t_m F_L(b)}{a_m}\in\QQ,\qquad m=b+1 .
\]
Here $t_m$ is the ratio of the coefficients of $p_m$ in $\Ahat_m$ and in $L_m$
(see Lemma~\ref{lem:ek-normalisation}). We will first show that the Eells--Kuiper invariant of the star spheres we consider can be expressed as a multiple of the slice degree. We will then obtain a closed form of the factor $r_b$ in Theorem~\ref{thm:main.thm.C}.

\begin{theorem}\label{thm:ek}
Let $b\ge2$ and let $\beta\colon S(V_b)\to G_{\mathrm{conj}}$ be a smooth equivariant map. Then,
in the conventions above,
\[
  \mu(\Sigma_{\rho_b,\beta})=k(\beta)\,r_b\qquad\text{in }\QQ/\ZZ .
\]
\end{theorem}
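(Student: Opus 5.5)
We will build an explicit spin filling $Z$ of $\Sigma_{\rho_b,\beta}$, compute its characteristic numbers in terms of a single integer, and then evaluate the Eells--Kuiper formula of Section~\ref{subsec:prelim-ek}.

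\textbf{Normal form of $\beta$.} By Lemma~\ref{lem:ek-slice-degree}(ii),(iii), both sides of the identity are additive in $[\beta]$ and invariant under $\beta\mapsto-\beta$. The class $[\Sigma_{\rho,\beta}]$ depends only on $[\beta]$, by Lemma~\ref{lem:reentrance}(3). So we first replace $\beta$ by an equivariantly homotopic smooth map that equals $1$ near $S(\RR^3)$. We may then regard $\beta$ as a family, parametrised by $D^3$ and trivial near $\partial D^3$, of gauge automorphisms of the Hopf bundle over $\HP^{b-1}$. Its evaluation at $w_0$ has degree $k(\beta)$.

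\textbf{The filling.} Remove an invariant tubular neighbourhood $S^3\times D^{4b}$ of the fixed $3$-sphere; there $G$ acts linearly on $D^{4b}=D(\HH^b)$. The complement $P$ is a free $G$-manifold, and $P/G\cong D^4\times\HP^{b-1}$. We identify it by gluing the two halves $D^3\times\HP^{b-1}$ (coming from the two disks $D(V_b)$) along $S^2\times\HP^{b-1}$; the gluing is twisted by the gauge family. The principal bundle $P\to D^4\times\HP^{b-1}$ is the pullback of the Hopf bundle over $\HP^{b-1}$, clutched by the family $\beta$. The associated disk bundle has $4$-dimensional fibres and its boundary contains $P$; take its mapping cylinder. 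Close it off over the fixed sphere by gluing in $D^4\times D(\gamma^\perp)$-type pieces, which assemble to $D^4\times\HP^b$. The result is a manifold $Z$ with $\partial Z=\Sigma_{\rho_b,\beta}$.

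$Z$ has the homotopy type of a space whose cohomology is generated by the classes $x$ (pulled back from $\HP^{b-1}$) and $u$, the Euler class of the quaternionic line bundle $E$ over the $(4b+4)$-dimensional base extending the Hopf bundle. The clutching shows that $E$ restricted to $S^4\times\{\ell\}$ has $c_2=-k(\beta)$ (Lemma~\ref{lem:ek-euler}). Spinness follows because every summand in the tangent bundle is quaternionic or trivial. The relative cohomology is a free module on the Thom-type class, and the cup products reduce to the relation $u^2 = u\,x$ type identities on $S^4\times\HP^{b}$, each relative top product being a multiple of $k(\beta)$.

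\textbf{Characteristic numbers.} Write $TZ\oplus\RR^{?}\cong (b+1)\,\bar E\oplus\ldots$, adapting the formula $T\HP^b\oplus(\text{adjoint})=(b+1)\gamma^*\otimes\ldots$. The expected outcome is a total Pontryagin class of the form $(1+z)^{2b+2}/(1+4z)$ in formal roots, where $z$ is the Chern root square. This explains the factor $Q(U)^{2b+2}/Q(4U)$ in \eqref{eq:ek-FQ}. The numbers $N_m[Z]$ and $\sgn(Z)$ are then linear in $k(\beta)$.

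Evaluation requires lifting $p_i$ to relative classes and pairing against $[Z,\partial Z]$. For this we use a residue computation: a coefficient $[U^{b+1}]$ from the top class, minus a correction $\frac1{b+1}[U^b](\cdot)'$ from the relative lift. Linearity in $K$ gives $N_m[Z]+t_m\sgn(Z)=k(\beta)\bigl(F_{\Ahat}(b)-t_mF_L(b)\bigr)$. The signature term is handled by $\sgn=L_m$-number plus a correction, absorbed into $F_L$. Dividing by $a_m$ yields the theorem.

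\textbf{Main obstacle.} I expect the hardest step to be the intersection form and relative lifts on $Z$: determining $H^{2m}(Z,\partial Z)$, the signature, and the precise relative classes $j^{-1}p_i$, together with sign and orientation conventions (e.g.\ $-k(\beta)$ versus $k(\beta)$). I would first check everything against the known cases $r_2=1/5952$ and $r_3=-1/12192$.
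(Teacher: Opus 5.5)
Your outline follows the paper's architecture: normal form, the mapping cylinder of $N\to N/G=D^4\times\HP^{b-1}$ closed off by $D^4\times\HP^b$, and Euler number $-k(\beta)$ on $S^4\times\{\ell\}$. But the computational core is missing, and what you put in its place would not give $F_Q(b)$.

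\emph{The Pontryagin class.} It is not $(1+z)^{2b+2}/(1+4z)$ in a single root, and $TZ$ does not split off $(b+1)\bar E$. Along the zero section $B=S^4\times\HP^{b-1}$ one has $TZ|_B=TB\oplus\xi_\beta$, and only the normal bundle $\xi_\beta$ is twisted over $S^4$. You also need $TZ|_F=T\HP^b\oplus\varepsilon^4$ for $F=\{a'\}\times\HP^b$; this fixes the $X^b$-coefficient of $p_b$, since $X^b|_B=0$. With injectivity of restriction to $B$ and $F$ in degrees $\le4b$, this gives $p(TZ)=(1-X)^{2b}(1-X-\kappa Y)^2(1-4X)^{-1}$ with $\kappa=-k(\beta)$.

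The correction $-\frac1{b+1}[U^b](Q^{2b+2})'/Q(4U)$ comes exactly from the two twisted factors, through $Q(-X-\kappa Y)^2=Q(-X)^2-2\kappa Y\,Q(-X)Q'(-X)$ and $Y^2=0$. The factor $\frac1{b+1}=\frac2{2b+2}$ records that two of the $2b+2$ root factors are twisted. It is not produced by a ``relative lift''. Since $\partial Z$ is a homotopy sphere, $j\colon H^i(Z,\partial Z;\QQ)\to H^i(Z;\QQ)$ is an isomorphism for $0<i<4m-1$, so the lifts carry no correction.

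\emph{The ring.} You need it exactly, and it is not governed by a relation like $u^2=ux$. With $u=X+\kappa Y$ the Thom class of $B$, one has $u^2-uX=\kappa XY\neq0$ whenever $k(\beta)\neq0$. From $\langle a\smile u,[Z,\partial Z]\rangle=\langle a|_B,[B]\rangle$ and $x^b=0$ one gets $Y^2=0$, $X^{b+1}+\kappa X^bY=0$ and $\langle X^bY,[\Zh]\rangle=(-1)^{b-1}$. This is what makes every characteristic number linear in $\kappa$.

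\emph{The signature.} Your handling of it is wrong. There is no $L_m$-number of $Z$, because $p_m(TZ)$ has no canonical lift to $H^{4m}(Z,\partial Z)$. And if you did replace $t_m\sgn(Z)$ by $t_mL_m[Z]$, it would cancel the $-t_mL_m$ part of $N_m$ and leave only $F_{\Ahat}$. In the paper, $\sgn(Z)=0$ is read off from the ring. For $b$ even, $H^{2m}(\Zh;\QQ)=0$. For $b$ odd, the Gram matrix in the basis $X^{(b+1)/2},X^{(b-1)/2}Y$ is $\bigl(\begin{smallmatrix}-\kappa&1\\1&0\end{smallmatrix}\bigr)$, of determinant $-1$. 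The term $-t_mF_L(b)$ comes instead from the $-t_mL_m$ summand of $N_m$ itself, evaluated with the degree-$4m$ component of $P$ taken formally. This is legitimate only because $p_m$ cancels from $N_m$ (Lemma~\ref{lem:ek-normalisation}).

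\emph{Signs and spinness.} Since $k(\beta)r_b$ and $-k(\beta)r_b$ differ in $\QQ/\ZZ$, the sign is part of the statement. You must check that $\partial Z=+\Sigma_{\rho_b,\beta}$ and fix the orientation of $\HP^{b-1}$; its sign $(-1)^{b-1}$ cancels the $(-1)^m$ coming from $U=-X$. Spinness should come from the $3$-connectedness of $Z$. Your reason fails, since $T\HP^b$ is not quaternionic.
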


\begin{remark}[Other conventions]\label{rem:ek-conventions}
The sign in Theorem~\ref{thm:ek} depends on three choices.
\begin{enumerate}
\item \emph{The gluing map.} If the two disks are glued by the reentrance
  $\hat\beta=\sigma_{\beta^{-1}}$ instead of $\sigma_\beta$, the sphere is
  $\Sigma_{\rho,\beta^{-1}}$, and by Theorem~\ref{thm:ek} and Lemma~\ref{lem:ek-slice-degree}(iii) its Eells--Kuiper invariant is
  $-k(\beta)r_b$. This matches the convention $J_\beta=\hat\beta$ of~\cite[(3.6)]{DHZ}
  and~\cite[\S1.3]{CDZ}.
\item \emph{Left or right multiplication.} Let $\rho'_b(q)(p,w)=(p,wq^{-1})$ be the action by right multiplication used
  in~\cite[(1.1)]{CDZ}, and let $c(p,w)=(p,\bar w)$, with $\bar w$ the componentwise
  conjugate. Then $c\circ\rho_b(q)=\rho'_b(q)\circ c$, the map $\beta'=\beta\circ c^{-1}$ is
  $\rho'_b$-equivariant, and $\sigma'_{\beta'}=c\,\sigma_\beta\,c^{-1}$. So $c$ induces a
  diffeomorphism $\Sigma_{\rho_b,\beta}\to\Sigma_{\rho'_b,\beta'}$ of degree
  $\det c=(-1)^{3b}=(-1)^b$, since $c$ acts on both disks. The slice degrees agree, because
  $\beta'(p,t\bar w_0)=\beta(p,tw_0)$. Hence
  $\mu(\Sigma_{\rho'_b,\beta'})=(-1)^b k(\beta')\,r_b$ for $\rho'_b$.
\item \emph{The orientation of $V_b$.} Reversing this orientation reverses the orientations of both disks and
  the sign of $\mu$.
\end{enumerate}
In~\cite{DHZ,CDZ} the two disks are oriented by pulling back the orientation of a
filling, so those papers fix no orientation of $V_b$. Our computation reproduces their
values when their orientation conventions are imposed (Remark~\ref{rem:ek-checks}).
\end{remark}

\emph{Plan of the proof.} In Section~\ref{subsec:ek-normal}, we put $\beta$ in a normal form and
write $\Sigma_{\rho,\beta}$ as a twisted union of $D^4\times S^{4b-1}$ and $S^3\times D^{4b}$.
In Section~\ref{subsec:ek-filling}, we construct the filling $Z$ and determine the orientations. In
Sections~\ref{subsec:ek-ring}--\ref{subsec:ek-pont}, we compute the cohomology ring of the
closed-up filling, its Pontryagin classes and its signature. In Section~\ref{subsec:ek-mu} we recall the
Eells--Kuiper invariant and identify the polynomial $N_m$. In Section~\ref{subsec:ek-evaluation} we evaluate the
Eells--Kuiper invariants and prove Theorem~\ref{thm:ek}, and in Section~\ref{subsec:ek-closed} we give the closed
form of $r_b$.

\subsection{A normal form and the free part}\label{subsec:ek-normal}

In this subsection, we first replace the conjugation-equivariant map $\beta\colon S(V_b)\to G_{\mathrm{conj}}$ by an
equivariantly homotopic map that equals $1$ near the fixed sphere $S(\RR^3)$ of the action $\rho_b$ of $G$ on
$S(V_b)$ (Lemma~\ref{lem:ek-normal-form}; if $\beta$ equals $-1$ there, we first replace $\beta$ by $-\beta$). Such a
map is said to be in \emph{normal form}; the precise condition is stated after Lemma~\ref{lem:ek-normal-form}. The replacement
preserves the star sphere up to orientation-preserving diffeomorphisms and does not change the slice degree.
We then use the normal form to write $\Sigma_{\rho,\beta}$ as a twisted union of $D^4\times S^{4b-1}$ and
$S^3\times D^{4b}$. This decomposition is adapted to the fixed set and the free part of the action of $G$ on
$\Sigma_{\rho,\beta}$.

We begin with the normal form. Let $R=2^{-1/2}$ and, for $\delta\in(0,1)$, let
\[
  U_\delta=\{(p,w)\in S(V_b):|w|\le\delta\},
\]
an invariant neighbourhood of the fixed sphere $S(\RR^3)$.

\begin{lemma}\label{lem:ek-normal-form}
Every smooth equivariant map $\beta\colon S(V_b)\to G_{\mathrm{conj}}$ is equivariantly homotopic to a smooth
equivariant map equal to $\varepsilon_\beta$ on $U_{4/5}$, where $\varepsilon_\beta\in\{\pm1\}$ is the value of $\beta$ on
$S(\RR^3)$ (Lemma~\ref{lem:ek-slice-degree}(i)). If $\beta$ equals $\varepsilon_\beta$ on $U_{4/5}$,
then $\varepsilon_\beta\beta$ equals $1$ there, $\Sigma_{\rho,\varepsilon_\beta\beta}$ is
diffeomorphic to $\Sigma_{\rho,\beta}$ preserving orientation, and
$k(\varepsilon_\beta\beta)=k(\beta)$.
\end{lemma}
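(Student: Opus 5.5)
We first build the homotopy explicitly and then check the two closing claims, which are formal.

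\emph{Step 1: straighten $\beta$ near the fixed sphere.} We work in the join coordinates $y=(p,w)$, $|p|^2+|w|^2=1$. For $|w|$ small, $p\neq0$, and we can write $y$ as the image of $y_0=(p/|p|,0)\in S(\RR^3)$ under a $G$-equivariant radial motion: define, for $s\in[0,1]$, the map
\[
 r_s(p,w)=\frac{(p,\,\phi_s(|w|)\,w/|w|)}{\bigl|(p,\,\phi_s(|w|)\,w/|w|)\bigr|}\quad(w\neq0),\qquad r_s(p,0)=(p,0),
\]
where $\phi_s\colon[0,1]\to[0,1]$ is a smooth family with $\phi_0=\id$, $\phi_1(t)=0$ for $t\le 4/5$, $\phi_s(t)=t$ for $t\ge 9/10$, and $\phi_s(t)=t\,\psi_s(t)$ with $\psi_s$ smooth. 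The last condition makes $r_s$ smooth across $w=0$, because $\phi_s(|w|)w/|w|=\psi_s(|w|)w$. Scaling $w$ commutes with left multiplication, so each $r_s$ commutes with $\rho_b$. We set $\beta_s=\beta\circ r_s$. This is smooth, equivariant, and satisfies $\beta_0=\beta$. On $U_{4/5}$ the map $r_1$ lands in $S(\RR^3)$, so $\beta_1=\varepsilon_\beta$ there by Lemma~\ref{lem:ek-slice-degree}(i).

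The main technical point is this smoothness of $r_s$ at the fixed sphere. We avoid it entirely by factoring $\phi_s(t)=t\psi_s(t)$ with $\psi_s$ smooth and bump-like, and by using the fact that $|(p,\psi w)|$ stays bounded away from $0$. Alternatively, we could invoke the equivariant smooth approximation theorem~\cite[Chapter~VI, Theorem~4.2]{Bredon} relative to a closed invariant set, exactly as in the proof of Lemma~\ref{lem:reentrance}(2).

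\emph{Step 2: the orientation-preserving diffeomorphism.} Suppose now that $\beta=\varepsilon_\beta$ on $U_{4/5}$. The map $-1$ is central and lies in the image of $\rho$, and $\rho(-1)=-\id$ on $\HH^b$. Hence $(\varepsilon_\beta\beta)$ is equivariant, and
\[
 \sigma_{\varepsilon_\beta\beta}=\sigma_{\varepsilon_\beta}\circ\sigma_\beta=\rho(\varepsilon_\beta)\circ\sigma_\beta
\]
by Lemma~\ref{lem:reentrance}(1). The factor $\rho(\varepsilon_\beta)\in\SO(V_b)$ extends linearly over $D(V_b)$. Applying this extension on the second disk and the identity on the first therefore gives an orientation-preserving diffeomorphism $\Sigma_{\rho,\beta}\to\Sigma_{\rho,\varepsilon_\beta\beta}$. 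Equivalently, this follows from Lemma~\ref{lem:reentrance}(3), since $R_\rho([\pm1])=0$ and $R_\rho$ is a homomorphism. By construction $\varepsilon_\beta\beta$ equals $1$ on $U_{4/5}$.

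\emph{Step 3: the slice degree.} We have $k(\varepsilon_\beta\beta)=k(\beta)$ directly from Lemma~\ref{lem:ek-slice-degree}(iii), which gives $k(-\beta)=k(\beta)$. Since $\beta_s$ in Step~1 is an equivariant homotopy, Lemma~\ref{lem:ek-slice-degree}(ii) shows that the normal-form replacement also does not change $k$.
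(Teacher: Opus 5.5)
Your proposal is correct and follows the paper's proof. Your $\psi_s$ plays the role of the paper's $\lambda_t=(1-t)+t\lambda$, and the closing claims come from Lemma~\ref{lem:reentrance} and Lemma~\ref{lem:ek-slice-degree}(iii) just as there; your explicit map (the identity on the first disk, $\rho(\varepsilon_\beta)$ on the second) simply unpacks $R_\rho([\varepsilon_\beta])=0$.

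One precision is needed in Step~1. Smoothness of $\psi_s$ on $[0,1]$ alone does not make $w\mapsto\psi_s(|w|)\,w$ smooth at $w=0$ (take $\psi(t)=t$), so you should require that $\psi_s$ be constant near $0$, as the paper's $\lambda_t$ is. You should also note that the normalising denominator never vanishes: the only dangerous case $p=0$ forces $|w|=1$, where $\psi_s(1)=1$.
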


\begin{proof}
We first construct the homotopy. Choose a smooth nondecreasing function
\[
  \lambda\colon[0,1]\longrightarrow[0,1]
\]
with $\lambda=0$ on $[0,4/5]$ and $\lambda=1$ near $1$, and let
$\lambda_t=(1-t)+t\lambda$ for $0\le t\le1$. Define
\[
  \phi_t\colon S(V_b)\longrightarrow S(V_b),\qquad
  \phi_t(p,w)=\frac{(p,\lambda_t(|w|)w)}{|(p,\lambda_t(|w|)w)|}.
\]
The denominator does not vanish: if $\lambda_t(|w|)w=0$ and $p=0$, then $|w|=1$ and
$\lambda_t(1)=1$. Each $\phi_t$ is smooth, since $\lambda_t(|w|)$ is constant near $w=0$, and
commutes with $\rho$, which acts on $w$ by left multiplication. Hence
\[
  \beta\circ\phi_t,\qquad 0\le t\le1,
\]
is a smooth equivariant homotopy from $\beta$. Its end $\beta\circ\phi_1$ equals
$\varepsilon_\beta$ on $U_{4/5}$, because
\[
  \phi_1(U_{4/5})\subset S(\RR^3)
\]
and $\beta=\varepsilon_\beta$ on $S(\RR^3)$.

We next prove the second statement. If $\varepsilon_\beta=1$, there is nothing to prove. If
$\varepsilon_\beta=-1$, then $\varepsilon_\beta\beta=-\beta$. By
Lemma~\ref{lem:reentrance}(3), $R_\rho$ is a homomorphism with $R_\rho([-1])=0$, so
\[
  [\Sigma_{\rho,-\beta}]=R_\rho([-1])+R_\rho([\beta])=[\Sigma_{\rho,\beta}],
\]
and $k(-\beta)=k(\beta)$ by Lemma~\ref{lem:ek-slice-degree}.
\end{proof}

By Lemma~\ref{lem:ek-normal-form}, Lemma~\ref{lem:ek-slice-degree}(ii) and the homotopy
invariance of $\Sigma_{\rho,\beta}$ (Lemma~\ref{lem:reentrance}(3)), it is enough to prove
Theorem~\ref{thm:ek} for $\beta$ in \emph{normal form}, that is,
\[
  \beta=1\quad\text{on }U_{4/5}.
\]
We assume this from now on. For $p\in D^3_R$, the closed disk of radius $R$ in $\RR^3$, and
$w\in S^{4b-1}=S(\HH^b)$, set
\[
  g(p,w)=\beta\bigl(p,(1-|p|^2)^{1/2}w\bigr)\in S^3 .
\]
Then
\[
  g(p,qw)=q\,g(p,w)\,q^{-1}\qquad(q\in G),
\]
and $g(p,w)=1$ for $|p|\ge3/5$. For such $p$ and every $w\in S^{4b-1}$, the point
$y=(p,(1-|p|^2)^{1/2}w)$ has $\HH^b$-component of norm $(1-|p|^2)^{1/2}\le4/5$, so $y\in U_{4/5}$, where
$\beta=1$.

We now define the clutching maps. Write $\RR^4=\RR\oplus\RR^3$ with coordinates $a=(s,p)$, and
write $D^4_R$ and $S^3_R=\partial D^4_R$ for the disk and the sphere of radius $R$. For
$a=(s,p)\in S^3_R$, define
\[
  f_a\colon S^{4b-1}\longrightarrow S^{4b-1}
\]
by
\begin{equation}\label{eq:ek-fa}
  f_a(w)=\begin{cases} g(p,w)\,w & s\ge0,\\ w & s\le0.\end{cases}
\end{equation}
The two formulas agree near $s=0$, where $|p|$ is close to $R>3/5$. For $s\ge0$ and $q\in G$,
equivariance of $g$ gives
\[
  f_a(qw)=g(p,qw)\,qw=q\,g(p,w)\,q^{-1}qw=q\,f_a(w),
\]
and the same holds trivially for $s\le0$. The point $f_a(w)=g(p,w)\,w$ lies in the line $\HH w$. Hence, $f_a$
maps each fibre $S(\HH w)=\{qw:q\in G\}$ of the Hopf bundle $S^{4b-1}\to\HP^{b-1}$ to itself and commutes
with left multiplication. Thus, $f_a$ is an automorphism of this bundle covering the identity. The map
$f_a$ is a diffeomorphism. For $s\ge0$, its inverse is
\[
  w\longmapsto g(p,w)^{-1}w,
\]
because $g(p,g(p,w)w)=g(p,w)$ by equivariance. Since $a\mapsto f_a$ joins every $f_a$ to
the identity, each $f_a$ preserves orientation.

Let $M_f$ be the closed manifold obtained from the oriented manifolds
\[
  D^4_R\times S^{4b-1}\qquad\text{and}\qquad S^3_R\times D^{4b}
\]
(oriented with the product orientations, $S^{4b-1}=\partial D^{4b}$) by identifying
$(a,w)\in S^3_R\times S^{4b-1}=\partial(D^4_R\times S^{4b-1})$ with
\[
  \Phi(a,w)=(a,f_a(w))\in S^3_R\times S^{4b-1}=\partial(S^3_R\times D^{4b}).
\]
The two boundary orientations of $S^3_R\times S^{4b-1}$ are opposite and $\Phi$ preserves the
product orientation. Hence $M_f$ is oriented. The group $G$ acts on $M_f$ by left
multiplication on the factors $S^{4b-1}$ and $D^{4b}$. This action is well defined on $M_f$ because the
identification $\Phi$ commutes with it. For $q\in G$, we have $\Phi(a,qw)=(a,f_a(qw))=(a,q\,f_a(w))$.

\begin{lemma}\label{lem:ek-twisted}
There is a $G$-equivariant, orientation-preserving diffeomorphism
\[
  \Sigma_{\rho,\beta}\cong M_f .
\]
\end{lemma}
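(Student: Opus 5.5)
The plan is to realise $\Sigma_{\rho,\beta}$ as the round sphere $S^{4b+3}=S(\RR\oplus V_b)$, cut along its equator and reglued by $\sigma_\beta$. I will then observe that, because $\beta$ is in normal form, the regluing only happens inside a region $D^4\times S^{4b-1}$. There it can be absorbed into a change of the clutching map between the two standard pieces of $S^{4b+3}$.

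\emph{Step 1 (hemispheres).} Write points of $S(\RR\oplus V_b)$ as $(s,p,w)$ with $s\in\RR$, $p\in\RR^3$, $w\in\HH^b$. I identify $D(V_b)$ with the upper closed hemisphere $\{s\ge0\}$ and $-D(V_b)$ with the lower one $\{s\le0\}$. In both cases the identification is the map $(s,y)\mapsto y$, smoothed near the equator by a collar as usual for twisted spheres. The orientation conventions fixed in Section~\ref{subsec:ek-statement} should make both identifications orientation preserving: the boundary of the upper hemisphere and the boundary of the lower hemisphere induce opposite orientations on the equator $S(V_b)$. Then $\Sigma_{\rho,\beta}$ is the sphere with the upper hemisphere reattached to the lower one through $\sigma_\beta$ along $S(V_b)$. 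Every map in this construction commutes with $\rho$, so the resulting description is $G$-equivariant.

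\emph{Step 2 (the standard decomposition).} Put $a=(s,p)\in\RR^4$. The standard sphere is the union of two pieces, $A=\{|w|\ge R\}$ and $B=\{|a|\ge R\}$, which meet along $\{|a|=|w|=R\}$. The maps $(a,w)\mapsto(a,w/|w|)$ and $(a,w)\mapsto(a/|a|\cdot R,w)$, suitably rescaled, identify $A\cong D^4_R\times S^{4b-1}$ and $B\cong S^3_R\times D^{4b}$ equivariantly, with identity clutching. This yields $S^{4b+3}=M_{\id}$.

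\emph{Step 3 (absorbing the twist).} Since $\beta=1$ on $U_{4/5}$ and $4/5>R$, the map $\sigma_\beta$ is the identity on the equator outside the set $\{|w|>4/5\}$. That set lies in the interior of $A\cap\{s=0\}$, which corresponds to $|p|<3/5$. I will define $\Sigma_{\rho,\beta}\to M_f$ as follows:
\begin{itemize}
\item on the lower part of $A$ and on all of $B$, use the identity, via the identifications of Step 2;
\item on the upper part of $A$, i.e.\ $s\ge0$, $|a|\le R$, use $(a,w)\mapsto(a,g(p,w)\,w)$ in the $S^{4b-1}$-coordinate, with $p$ rescaled as in the definition of $g$.
\end{itemize}
By~\eqref{eq:ek-sigma} and Lemma~\ref{lem:ek-slice-degree}(iv), on the equator this map differs from the identity exactly by $\sigma_\beta^{-1}$, so it is compatible with the regluing. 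On the part $s\ge0$, $|a|=R$ of the common boundary with $B$, it differs from the identity by $f_a$ as defined in~\eqref{eq:ek-fa}. Hence the pieces glue to a well-defined bijection onto $M_f$. Each local formula is a diffeomorphism by the inverse formula $w\mapsto g(p,w)^{-1}w$ established above. Each is $G$-equivariant, because $g(p,qw)qw=q\,g(p,w)w$. Each is orientation preserving, since it is isotopic to the identity through the maps $f_a$.

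\emph{Main obstacle.} I expect the main difficulty to be bookkeeping rather than ideas, in two places.
\begin{itemize}
\item \emph{Direction of the twist.} I must check whether the factor in Step 3 is $g$ or $g^{-1}$, that is, whether the factor that comes out matches $f_a$ as defined in~\eqref{eq:ek-fa} and not its inverse. This depends on the convention that $y\in\partial D(V)$ is identified with $\sigma_\beta(y)$ in $-D(V)$, and on which hemisphere carries which disk. If the sign comes out wrong, I would fix it by swapping the roles of the hemispheres, using the reflection $s\mapsto -s$ together with an orientation check.
\item \emph{Smoothness across the equator and the corners.} Here I would use that $g\equiv1$ for $|p|\ge3/5$, which is exactly why the two branches of~\eqref{eq:ek-fa} agree near $s=0$. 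I would also use standard collar smoothing of the corners $\{|a|=|w|=R\}$, which is the same for $M_f$ and $M_{\id}$.
\end{itemize}
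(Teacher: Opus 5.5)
Your decomposition is the paper's: $A=\{|w|\ge|a|\}\cong D^4_R\times S^{4b-1}$, your $B\cong S^3_R\times D^{4b}$, identity clutching for the round sphere, and a fibrewise correction on the upper half of $A$. But Step~3 fails as written. Your correction twists in the wrong direction, and both compatibility claims you make for it are false.

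Work in the coordinates of $A$, with $w$ a unit vector. In $\Sigma_{\rho,\beta}$ the equatorial point $y=(p,w)$ of the upper hemisphere is identified with $\sigma_\beta(y)=(p,g(p,w)^{-1}w)$ of the lower one. Your map is the identity on the lower half, so it is well defined only if the upper map sends $y$ to $\sigma_\beta(y)$. Yours sends it to $(p,g(p,w)w)=\sigma_\beta^{-1}(y)$, and the two agree only where $\sigma_\beta^2=\id$. Differing from the identity by $\sigma_\beta^{-1}$ is therefore the incompatible case, not the compatible one.

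The face $\{|a|=R,\ s\ge0\}$ fails in the same way. There the point $(a,w)$ of $A$ is the point $(a,w)$ of your $B$, which you send to $(a,w)\in\partial(S^3_R\times D^{4b})$. The $A$-side image $(a,f_a(w))$, however, is glued in $M_f$ to $(a,f_a(f_a(w)))$, so you would need $f_a^2=\id$.

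The repair is to use the inverse, as the paper does. On the upper half of $A$ set $\Psi(a,w)=(a,h_p(w))$ with $h_p(w)=g(p,w)^{-1}w$.
\begin{itemize}
\item On the equator $\Psi$ is $\sigma_\beta$, so it matches the identity on the lower half through the gluing.
\item On the face with your $B$ it is $f_a^{-1}$, since $h_p^{-1}(w)=g(p,w)w$ by equivariance. So the point with $\Psi$-coordinates $(a,u)$ is the point $(a,f_a(u))$ of $B$, which is exactly the clutching $\Phi$ of $M_f$.
\end{itemize}
Both conditions then hold at once. The hemisphere swap you hold in reserve is unnecessary, and it would not directly help: it moves the nontrivial clutching to the face $s\le0$ or inverts it, and you would still have to return to $\Phi$ with the correct orientation.

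You should also check, rather than assume, that $S(\RR\oplus V_b)\cong M_{\id}$ preserves orientation; your isotopy argument covers only the correction $\Psi$. The paper checks this at a point $(0,w)$. There the outward normal of $S^{4b-1}$ lies in $\HH^b$ and moves past the four vectors of $\RR^4$ without a sign.
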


\begin{proof}
We first consider the standard sphere, the case $f_a=\mathrm{id}$. Write
\[
  S^n=S(\RR\oplus V_b)=S(\RR^4\oplus\HH^b),
\]
with points $(a,w)$. The maps
\[
  v\longmapsto\Bigl(\pm\cos\frac{\pi|v|}{2},\ \frac{\sin(\pi|v|/2)}{|v|}\,v\Bigr)
\]
are $G$-equivariant diffeomorphisms from $D(V_b)$ onto the upper and lower hemispheres
$\{s\ge0\}$ and $\{s\le0\}$, equal to the identity on the boundary. At the poles $(\pm1,0)$ the
outward normals are $\pm\partial_s$, so the first preserves orientation and the second reverses
it. Hence
\[
  S^n=D(V_b)\cup_{\mathrm{id}}\bigl(-D(V_b)\bigr).
\]
Let
\[
  A=\{|w|\ge|a|\},\qquad C=\{|w|\le|a|\}.
\]
The maps
\begin{align*}
  A&\longrightarrow D^4_R\times S^{4b-1}, & (a,w)&\longmapsto(a,\,w/|w|),\\
  C&\longrightarrow S^3_R\times D^{4b}, & (a,w)&\longmapsto(Ra/|a|,\,w/|a|),
\end{align*}
are diffeomorphisms which agree on $A\cap C=\{|a|=|w|=R\}$. The first preserves orientation:
at a point $(0,w)$ the tangent space is $\RR^4\oplus T_wS^{4b-1}$, the outward normal $(0,w)$
lies in $\HH^b$, and moving it past the four vectors of $\RR^4$ costs no sign. As $S^n$ is
connected, the second preserves orientation too. Thus $S^n=M_{\mathrm{id}}$ as oriented
manifolds.

We next introduce the twist. The star sphere is obtained from the two hemispheres by
identifying the equatorial point $y$ of the upper one with the point $\sigma_\beta(y)$ of the
lower one. On $C\cap\{s=0\}$ we have $|w|\le R<4/5$, so $\sigma_\beta=\mathrm{id}$ there. On
$A\cap\{s=0\}$ we use the coordinates $(a,w)=((0,p),w)$ of $A$, in which $w$ is a unit vector.
A point of $S(V_b)$ with these coordinates is $(p,(1-|p|^2)^{1/2}w)$, and there $\beta$ takes
the value $g(p,w)$. So formula~\eqref{eq:ek-sigma} reads
\[
  \sigma_\beta(p,w)=(p,h_p(w)),\qquad h_p(w)=g(p,w)^{-1}w .
\]
Let $A_\pm=A\cap\{\pm s\ge0\}$, and define
\[
  \Psi(a,w)=(a,h_p(w))\quad\text{on }A_+,\qquad \Psi=\mathrm{id}\quad\text{on }A_-,
\]
where $p$ is the $\RR^3$-component of $a$. An equatorial point $(p,w)$ of $A_+$ goes to
$(p,h_p(w))$, which is also the image of its partner $\sigma_\beta(p,w)=(p,h_p(w))$ in $A_-$.
So $\Psi$ is well defined on $A_+\cup_{\sigma_\beta}A_-$.

We now verify that $\Psi$ is smooth across the equator. The smooth structure of a manifold
glued along its boundary does not depend on the collars used to define
it~\cite[VI.5, pp.~100--101]{Kosinski}. For $|p|\ge3/5$ we have $h_p=\mathrm{id}$, and $\Psi$
and the gluing are the identity. Over $|p|<2/3$ we may use the collars
\[
  (t,(p,w))\longmapsto((\pm t,p),w),\qquad t\ge0,
\]
of the equator in $A_\pm$. Since $\sigma_\beta$ preserves $p$, in the resulting chart
$(t,(p,w))$ of $A_+\cup_{\sigma_\beta}A_-$, with $(p,w)$ in the upper copy, $\Psi$ is
\[
  (t,(p,w))\longmapsto((t,p),h_p(w))
\]
on both sides of $t=0$. Thus $\Psi$ is a smooth diffeomorphism from $A_+\cup_{\sigma_\beta}A_-$
onto $D^4_R\times S^{4b-1}$, and it preserves orientation because each $h_p$ does.

Finally, we compare the gluing with $C$. On the face $\{|a|=R,\ s\ge0\}$ of $A_+$, the point
with $\Psi$-coordinates $(a,w)$ is the point with $A$-coordinates
\[
  (a,h_p^{-1}(w))=(a,f_a(w))
\]
of the standard sphere, and that point lies on $C$; on the face $\{|a|=R,\ s\le0\}$ nothing
changes. So the identification of $A$ with $C$ becomes $\Phi$, and $\Psi\cup\mathrm{id}_C$ is
the required diffeomorphism. Both $\Psi$ and the coordinate maps commute with left
multiplication on $w$.
\end{proof}

Under the diffeomorphism given by Lemma~\ref{lem:ek-twisted}, the fixed set of $G$ on $\Sigma_{\rho,\beta}$ is the $3$-sphere
\[
  S^3_R\times\{0\}\subset S^3_R\times D^{4b},
\]
which is the union of the two disks $D(\RR^3)$. The piece $S^3_R\times D^{4b}$ is an invariant
tubular neighbourhood of it, and $G$ acts freely on the complementary piece
\[
  N=D^4_R\times S^{4b-1},\qquad N/G=D^4_R\times\HP^{b-1}.
\]
We call $N$, the complement of an open invariant tubular neighbourhood of the fixed sphere, the
\emph{free part}.

\subsection{The filling}\label{subsec:ek-filling}

In this subsection we construct a compact spin manifold $Z$ with boundary $\Sigma_{\rho,\beta}$
(Proposition~\ref{prop:ek-filling}). We will use $Z$ in our computation of the Eells--Kuiper invariant.
Over the free part $N$, the manifold $Z$ is the mapping cylinder of the orbit map $N\to N/G=D^4_R\times\HP^{b-1}$, and over
the fixed sphere it is closed off by a copy of $D^4\times\HP^b$. We then orient $Z$ so that its boundary is
$\Sigma_{\rho,\beta}$ as an oriented manifold (Proposition~\ref{prop:ek-filling}(iv)). The manifold $Z$ contains the union
$S^4\times\HP^{b-1}$ of $D^4_R\times\HP^{b-1}$ and a second copy of it (the manifold $B$ of Definition~\ref{def:ek-filling}), and we
express the Euler number of the normal bundle of this union, restricted to a $4$-sphere $S^4\times\{\ell\}$, through the slice degree
(Lemma~\ref{lem:ek-euler}). To close off the
mapping cylinder, we first place the disk bundle of $\gamma$ inside $\HP^b$, the space of lines in $\HH^{b+1}$. For
$b=3$ this embedding, and the use of $\HP^b$ to detect the top Pontryagin class, is due to Chen,
Deng and Zhang~\cite[Lemma~5.1 and \S5.2]{CDZ}.
Write $[w:t]$ for the line $\HH(w,t)$, $w\in\HH^b$, $t\in\HH$. Recall from Section~\ref{subsec:ek-statement} that $\gamma=\gamma_{b-1}$ is the tautological bundle over $\HP^{b-1}$, whose fibre over a line $\ell$ is $\ell$, and that $x=c_2(\gamma)$. For every $r\ge1$, let $\gamma_r$ be the tautological bundle over $\HP^r$; in particular, $\gamma_b$ is the tautological bundle over $\HP^b$.

\begin{lemma}\label{lem:ek-HPb}
The map $\iota\colon D(\gamma)\to\HP^b$, $\iota(hw)=[w:\bar h]$ for $w\in S^{4b-1}$ and
$|h|\le1$, is a well-defined smooth embedding whose image is the set $\{[w:t]:|t|\le|w|\}$. It maps the zero
section of $D(\gamma)$ to the standard $\HP^{b-1}=\{[w:0]\}$ and a unit vector $v$ to $[v:1]$. The
complement of the interior of its image is the closed unit ball $D^{4b}$ of the affine chart
$z\mapsto[z:1]$. Hence $\HP^b=\iota(D(\gamma))\cup D^{4b}$, glued along $S^{4b-1}$ by $v\mapsto v$,
and $c_2(\gamma_b)$ restricts to $x=c_2(\gamma_{b-1})$ on $\HP^{b-1}$.
\end{lemma}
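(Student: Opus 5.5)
The plan is to verify each assertion of Lemma~\ref{lem:ek-HPb} directly from the homogeneous coordinates $[w:t]$, using the identity $c^{-1}=\bar c$ for unit quaternions. I will write down an explicit smooth inverse rather than compute a differential.

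\emph{Well-definedness.} A point of $D(\gamma)$ over the line $\ell$ has the form $hw$ with $w\in S^{4b-1}\cap\ell$ and $|h|\le1$. Any other such representation is $hw=h'w'$ with $w'=cw$ and $|c|=1$. Then $h'=hc^{-1}=h\bar c$, so $\bar h'=c\bar h$. Hence
\[
  [w':\bar h']=[cw:c\bar h]=[c(w,\bar h)]=[w:\bar h],
\]
and $\iota$ is well defined. It is smooth, because locally over $\HP^{b-1}$ we may choose a smooth unit section $w$ of the Hopf bundle, and then $(h,w)\mapsto[w:\bar h]$ is smooth.

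\emph{Explicit inverse.} Let $O=\{[w:t]:w\neq0\}$. Define $\kappa\colon O\to\gamma$ by
\[
  \kappa([w:t])=\frac{\bar t\,w}{|w|^2}.
\]
This is independent of the representative: replacing $(w,t)$ by $(\lambda w,\lambda t)$ with $\lambda\in\HH\setminus0$ gives $\bar t\bar\lambda\lambda w/(|\lambda|^2|w|^2)$, which is the same vector. So $\kappa$ is smooth on the open set $O$, and it takes values in the line $\HH w$.

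Next I check that $\kappa$ inverts $\iota$. First, $\kappa\circ\iota=\mathrm{id}$, since $\kappa([w:\bar h])=hw$ when $|w|=1$. Second, if $|t|\le|w|$, normalise the representative to $|w|=1$. Then $\kappa([w:t])=\bar t w$ has norm $|t|\le1$, and
\[
  \iota(\bar t w)=[w:t].
\]
Consequently $\iota$ is a bijection from $D(\gamma)$ onto $\{[w:t]:|t|\le|w|\}$, with smooth inverse $\kappa$. In particular it is a smooth embedding (of a manifold with boundary). It is in fact the restriction of the diffeomorphism $\kappa^{-1}\colon\gamma\to O$.

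\emph{Zero section, unit vectors, complement.} For $h=0$ we get $\iota(w)=[w:0]$, which is the standard $\HP^{b-1}$. For a unit vector $v=hw$ with $|h|=1$,
\[
  \iota(v)=[w:\bar h]=[hw:h\bar h]=[v:1].
\]
The interior of the image is $\{|t|<|w|\}$, so its complement is $\{|t|\ge|w|\}$. On this set $t\ne0$, so we can normalise to $t=1$, and it becomes $\{[z:1]:|z|\le1\}$, the closed unit ball $D^{4b}$ of the affine chart. The two pieces meet along $\{|t|=|w|\}=\{[v:1]:|v|=1\}$. There the point $\iota(v)=[v:1]$ is the chart point $v$, so the gluing map is $v\mapsto v$.

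\emph{Chern class.} The restriction of $\gamma_b$ to $\HP^{b-1}=\{[w:0]\}$ has fibre $\HH(w,0)\cong\HH w$ over $[w:0]$. This identification is complex linear for left multiplication by $i$, so it respects the canonical orientations. Hence $\gamma_b|_{\HP^{b-1}}\cong\gamma_{b-1}$ as complex bundles, and naturality of Chern classes gives $c_2(\gamma_b)|_{\HP^{b-1}}=x$.

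The only real obstacle is bookkeeping with quaternionic conjugation, namely that $\bar h$ rather than $h$ is needed for the formula to be compatible with left scalar multiplication. The explicit inverse $\kappa$ avoids any separate immersion computation.
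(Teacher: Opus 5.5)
Your proof is correct and takes the same route as the paper's. You use the same well-definedness computation ($h\mapsto hc^{-1}$ under $w\mapsto cw$), the same explicit inverse $[w:t]\mapsto\bar t w$ after normalising $|w|=1$, the identity $[w:\bar h]=[hw:1]$ for $|h|=1$, the normalisation $z=t^{-1}w$ on the complement, and restriction of the tautological bundle for the Chern class.

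Your one refinement is to define the inverse $\kappa$ on all of $\HP^b\setminus\{[0:1]\}$. This exhibits $\iota$ as the restriction of a diffeomorphism from $\gamma$ onto an open subset of $\HP^b$, which makes the embedding and interior claims for the manifold with boundary $D(\gamma)$ immediate; the paper leaves this point implicit.
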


\begin{proof}
Replacing $w$ by $qw$ replaces $h$ by $hq^{-1}$ and $[w:\bar h]$ by $[qw:q\bar h]$, the same
line. Hence $\iota$ is well defined, and in a local section $w$ of the Hopf bundle it is smooth
in $h$. A line $[w:t]$ with $|t|\le|w|$ has $w\ne0$; scaling, we may take $|w|=1$, and then
$\bar t w$ is its unique preimage, which is unchanged when $(w,t)$ is replaced by $(qw,qt)$.
Hence $\iota$ is a smooth bijection onto its image with smooth inverse. If $|h|=1$, then
$[w:\bar h]=[hw:h\bar h]=[hw:1]$. A line $[w:t]$ with $|t|\ge|w|$ is $[z:1]$ with
$z=t^{-1}w$ and $|z|\le1$. The tautological bundle of $\HP^b$ restricts to that of
$\HP^{b-1}$, which gives the last claim.
\end{proof}

Let $\tilde D^4$ be a second copy of $D^4_R$, and let $S^4=D^4_R\cup_{S^3_R}\tilde D^4$, with the orientation $o$ induced on
$D^4_R\subset\RR^4$ by the standard orientation on $\RR^4$. For $a=(s,p)\in S^3_R$, let
$\tilde f_a$ be the bundle automorphism of $\gamma$ given on each line by
$\tilde f_a(hw)=h\,f_a(w)$, $w\in S^{4b-1}$, $h\in\HH$. It is left $\HH$-linear on each fibre and
covers the identity of $\HP^{b-1}$.

\begin{definition}[The filling]\label{def:ek-filling}
Let $Z$ be the smooth manifold obtained from $D^4_R\times D(\gamma)$ and $\tilde D^4\times\HP^b$ by
identifying $(a,v)\in S^3_R\times D(\gamma)$ with $(a,\iota(\tilde f_a(v)))\in\partial \tilde D^4\times\HP^b$, and
smoothing the corners. Let $B=S^4\times\HP^{b-1}$ be the union of the zero section
$D^4_R\times\HP^{b-1}$ of $D^4_R\times D(\gamma)$ and of $\tilde D^4\times\HP^{b-1}\subset\tilde D^4\times\HP^b$, where $\HP^{b-1}=\{[w:0]\}$ (Lemma~\ref{lem:ek-HPb}), let $\xi_\beta\to B$ be the quaternionic line
bundle obtained from $D^4_R\times\gamma$ and $\tilde D^4\times\gamma$ by the clutching
$(a,v)\sim(a,\tilde f_a(v))$, and let $F=\{a'\}\times\HP^b$ for a point $a'$ in the interior of
$\tilde D^4$. Let $\pi\colon Z\to S^4$ be the projection to the disk factors.
\end{definition}

The identification that defines $Z$ covers the identity of $S^3_R$, so $\pi$ is well defined.
We now prove some basic properties of the filling $Z$.

\begin{proposition}\label{prop:ek-filling}
\begin{enumerate}
\item $Z$ is a compact smooth $4m$-manifold. It contains $B$ in its interior with normal
  bundle $\xi_\beta$, and $Z=D(\xi_\beta)\cup(\tilde D^4\times D^{4b})$ is $D(\xi_\beta)$ with one
  handle of index $4b$ attached along $\tilde D^4\times S^{4b-1}$.
\item $Z$ is $3$-connected; in particular it is spin.
\item $F\subset\pi^{-1}(a')$ is a copy of $\HP^b$ with trivial normal bundle, and
  $F\cap B=\{a'\}\times\HP^{b-1}$ is the standard $\HP^{b-1}\subset\HP^b$.
\item Orient $B$ by $o$ and an orientation $O$ of $\HP^{b-1}$ (Lemma~\ref{lem:ek-orientation}),
  and orient $Z$ by $B$ followed by the canonical orientation of $\xi_\beta$. Then $\partial Z$,
  with the outward normal first, is $G$-equivariantly and orientation-preservingly
  diffeomorphic to $\Sigma_{\rho,\beta}$.
\item \emph{($S^3$-quotient description.)} Let $N\subset\Sigma_{\rho,\beta}$ be the free part
  defined after Lemma~\ref{lem:ek-twisted}. Then $D^4_R\times D(\gamma)$ is the mapping cylinder
  $N\times_G D(\HH)$ of the orbit map $N\to N/G$, where $G$ acts on $N\times D(\HH)$ by
  $q\cdot(y,h)=(\rho(q)y,hq^{-1})$. The zero section of this mapping cylinder is $N/G$, the restriction of $\xi_\beta$ to
  $N/G$ is the associated bundle $N\times_G\HH$, and
  \[
    Z=\bigl(N\times_G D(\HH)\bigr)\cup_{\partial N\times_G D(\HH)}\bigl(\tilde D^4\times\HP^b\bigr).
  \]
\end{enumerate}
\end{proposition}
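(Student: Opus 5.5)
I will prove the five items in the order (i), (v), (iii), (ii), (iv), since the later ones use the decompositions established first.

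\emph{Item (i).} The gluing map $(a,v)\mapsto(a,\iota(\tilde f_a(v)))$ identifies $S^3_R\times D(\gamma)$ with a codimension-zero submanifold of $\partial\tilde D^4\times\HP^b$. This uses that $\tilde f_a$ is a fibrewise linear diffeomorphism of $D(\gamma)$ and that $\iota$ is an embedding (Lemma~\ref{lem:ek-HPb}). So $Z$ is a compact manifold with corners of dimension $4+4b=4m$, and we smooth the corners in the standard way.

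Inside $\tilde D^4\times\HP^b$ the submanifold $\tilde D^4\times\HP^{b-1}$ has tubular neighbourhood $\tilde D^4\times\iota(D(\gamma))$. Its complement is $\tilde D^4\times D^{4b}$, again by Lemma~\ref{lem:ek-HPb}. Hence $Z=D(\xi_\beta)\cup(\tilde D^4\times D^{4b})$. The piece $\tilde D^4\times D^{4b}$ is attached along $\tilde D^4\times S^{4b-1}$, which is exactly a $4b$-handle. The clutching defining $\xi_\beta$ is precisely the restriction of the gluing of $Z$ to the zero sections and fibres, so $B$ has normal bundle $\xi_\beta$.

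\emph{Item (v).} The map $N\times D(\HH)\to D^4_R\times D(\gamma)$, $((a,w),h)\mapsto(a,hw)$, is invariant under $(\rho(q)y,hq^{-1})$. It induces a diffeomorphism $N\times_G D(\HH)\cong D^4_R\times D(\gamma)$, and the same formula with $\HH$ in place of $D(\HH)$ identifies the associated bundle. Since the boundary identification $\Phi$ uses $f_a$, and $\tilde f_a$ is its linear extension, the two descriptions of $Z$ agree.

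\emph{Item (iii).} We take $F=\{a'\}\times\HP^b$, which lies in the product piece $\tilde D^4\times\HP^b$. Its normal bundle is the pullback of $T_{a'}\tilde D^4$, hence trivial. Its intersection with $B$ is $\{a'\}\times\HP^{b-1}$ by construction.

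\emph{Item (ii).} By (i), $Z$ is homotopy equivalent to $B=S^4\times\HP^{b-1}$ with a $4b$-cell attached, since $D(\xi_\beta)\simeq B$. The manifold $B$ is $3$-connected, and attaching a cell of dimension $4b\ge8$ preserves $3$-connectivity. So $H^1(Z;\ZZ/2)=H^2(Z;\ZZ/2)=0$, which gives $w_1=w_2=0$ and a unique spin structure.

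\emph{Item (iv).} This is the main obstacle, both for the diffeomorphism itself and for the sign bookkeeping. Comparing with the free part $N$ alone is not enough, so I would compute $\partial Z$ piece by piece as follows.
\begin{itemize}
\item The boundary of $D^4_R\times D(\gamma)$ away from the glued region is $D^4_R\times S(\gamma)=D^4_R\times S^{4b-1}=N$.
\item The boundary of $\tilde D^4\times\HP^b$ away from the glued region is $\partial\tilde D^4\times D^{4b}=S^3_R\times D^{4b}$, the complement of $\iota(D(\gamma))$.
\item These two pieces meet along $S^3_R\times S^{4b-1}$. There a point $(a,w)$ with $|v|=1$ is sent to $(a,\iota(f_a(w)))=(a,[f_a(w):1])$, that is, $w\mapsto f_a(w)\in S^{4b-1}=\partial D^{4b}$.
\end{itemize}
This is exactly the clutching $\Phi$ defining $M_f$, so $\partial Z\cong M_f\cong\Sigma_{\rho,\beta}$ by Lemma~\ref{lem:ek-twisted}, and the identification is $G$-equivariant.

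For orientations it suffices to check at a single point of $N$, by connectedness. The outward normal of $D^4_R\times D(\gamma)$ at $(a,w)$ is radial in the fibre $\HH w$. Its complement in $\HH w$ is the tangent space of $S(\HH w)$, oriented as a boundary, outward normal first. Hence the orientation ``outward normal, then $o$, then $O$, then the fibre sphere'' must match the product orientation $\RR^4\times S^{4b-1}$ used in $M_f$. Moving the normal past the four-dimensional $o$ gives no sign. The only point needing care is that $O$ followed by the sphere fibre orientation reproduces the boundary orientation of $S^{4b-1}=S(\HH^b)$. This is where the choice of $O$ in Lemma~\ref{lem:ek-orientation} enters, and I expect to fix $O$ precisely so that this holds. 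I would verify it at a point $w_0$, using the canonical orientation of the line $\HH w_0$ and the complex orientation of $T\HP^{b-1}=\mathrm{Hom}(\gamma,\gamma^\perp)$.
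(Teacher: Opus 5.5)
Your proposal is correct and follows the paper's proof item by item: the handle decomposition from Lemma~\ref{lem:ek-HPb}, $3$-connectivity from $Z\simeq B\cup e^{4b}$, the piece-by-piece identification $\partial Z\cong M_f$ followed by Lemma~\ref{lem:ek-twisted}, and the quotient map $((a,w),h)\mapsto(a,hw)$ for (v). For the orientation in (iv), the condition you isolate is exactly the defining property of $O$ in Lemma~\ref{lem:ek-orientation}(i), once the radial vector is moved past the even-dimensional $O$. So no separate check at $w_0$ via a ``complex orientation'' of $T\HP^{b-1}$ is needed, which is just as well, since that bundle carries no complex structure in general (already $\HP^1=S^4$ is not almost complex).
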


\begin{proof}
(i) The pieces are compact manifolds of dimension $4+4b=4m$, and they are glued by a
diffeomorphism between codimension-zero submanifolds of their boundaries. The zero
section $\HP^{b-1}$ of $D(\gamma)$ is fixed by $\tilde f_a$, which is linear on each fibre, so $B$ is a closed submanifold of the interior. Its normal
bundle is $D^4_R\times\gamma$ over the first piece and, by Lemma~\ref{lem:ek-HPb}, $\tilde D^4\times\gamma$
over the second, clutched by $\tilde f_a$; this is $\xi_\beta$. By Lemma~\ref{lem:ek-HPb},
$\tilde D^4\times\HP^b=(\tilde D^4\times\iota(D(\gamma)))\cup(\tilde D^4\times D^{4b})$, which gives the handle
decomposition.

(ii) $B=S^4\times\HP^{b-1}$ is $3$-connected, $D(\xi_\beta)$ deformation retracts onto $B$, and
attaching a handle of index $4b\ge8$ does not change $\pi_j$ for $j\le3$. So $Z$ is
$3$-connected, $H^2(Z;\ZZ/2)=0$ and $w_2(Z)=0$.

(iii) A neighbourhood of $F$ is $\tilde D^4\times\HP^b$, which is a product. The second claim is
Lemma~\ref{lem:ek-HPb}.

(iv) $\partial Z$ consists of $D^4_R\times S(\gamma)$ and $\partial \tilde D^4\times D^{4b}$, the
complement of the interior of $\partial \tilde D^4\times\iota(D(\gamma))$ in $\partial \tilde D^4\times\HP^b$. Identify
$S(\gamma)$ with $S^{4b-1}$ by $v\mapsto v$. By Lemma~\ref{lem:ek-HPb}, the point
$(a,v)\in S^3_R\times S(\gamma)$ is glued to $(a,[f_a(v):1])$, the point $(a,f_a(v))$ of
$S^3_R\times\partial D^{4b}$. Hence $\partial Z$ is diffeomorphic to $M_f$, by the map that is the identity on
each of the two pieces. On the open set $D^4_R\times(D(\gamma)\setminus0)$ of $Z$, the map $(a,v)\mapsto(a,v)$ to
$D^4_R\times(D^{4b}\setminus0)$ carries the orientation of $Z$, given by $o$ on $D^4_R$, then $O$ on $\HP^{b-1}$
(Lemma~\ref{lem:ek-orientation}(i)), then the canonical orientation of the fibres of $\gamma$
(Section~\ref{subsec:ek-statement}), to the product orientation of $\RR^4\oplus\HH^b$, by the definition of $O$ in
Lemma~\ref{lem:ek-orientation}(i).
On the boundary piece $D^4_R\times S^{4b-1}$ the outward normal is the radial vector of
$\HH^b$, and moving it past the four vectors of $\RR^4$ costs no sign; so the boundary
orientation there is the product orientation of $D^4_R\times S^{4b-1}$, which is the
orientation of that piece of $M_f$. A diffeomorphism of connected oriented manifolds that
preserves orientation on a nonempty open set preserves it everywhere. Lemma~\ref{lem:ek-twisted}
completes the proof.

(v) The map $N\times D(\HH)\to D^4_R\times D(\gamma)$, $((a,w),h)\mapsto(a,hw)$, is constant
on orbits, since $(a,qw,hq^{-1})\mapsto(a,hw)$, and it induces a diffeomorphism from the
quotient, because the action is free and every vector of $\gamma$ over $\HH w$ has the form
$hw$ for a unique $h$. Its restriction to $h=0$ is the orbit map, and over $N/G$ it
identifies $N\times_G\HH$ with $D^4_R\times\gamma$. Finally
$\partial N\times_G D(\HH)=S^3_R\times D(\gamma)$, which is the glued part.
\end{proof}

\begin{lemma}\label{lem:ek-orientation}
\begin{enumerate}
\item There is a unique orientation $O$ of $\HP^{b-1}$ for which the diffeomorphism
  $D(\gamma)\setminus0\to D^{4b}\setminus0$, $v\mapsto v$, carries the orientation
  ($O$ followed by the canonical fibre orientation) to the orientation of $\HH^b$. In the
  chart $z\mapsto[z:1]$, $z\in\HH^{b-1}$, it is the standard orientation of $\HH^{b-1}$.
\item $\langle x^{b-1},[\HP^{b-1},O]\rangle=(-1)^{b-1}$.
\end{enumerate}
\end{lemma}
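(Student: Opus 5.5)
The plan for (i) is to use that $v\mapsto v$ is a diffeomorphism $D(\gamma)\setminus0\to D^{4b}\setminus0$. Every nonzero vector of $\HH^b$ lies in exactly one line, and $|v|$ is preserved, so this holds. Its source is connected, since $b\ge2$. The pulled-back orientation of $\HH^b$ on the total space, together with the fixed canonical fibre orientation, determines at each point a unique orientation of the tangent space of the base. This is because an orientation of a total space splits uniquely as base followed by fibre once the fibre orientation is fixed. The resulting local orientations of $\HP^{b-1}$ are continuous, so they define a global orientation $O$, and it is unique.

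To identify $O$, I will work in the chart $z\mapsto[z:1]$, $z\in\HH^{b-1}$, and trivialise $\gamma$ there by the section $z\mapsto(z,1)$. The vector $h(z,1)$ corresponds to $(z,h)$, and by the definition of the canonical orientation the fibre coordinate $h$ carries the orientation of $\HH$. In these coordinates the map is
\[
  (z,h)\longmapsto(hz,h)\in\HH^{b-1}\oplus\HH .
\]
At a point with $z=0$ and $h\neq0$ its differential is block triangular. Its diagonal blocks are left multiplication by $h$ on each coordinate of $\HH^{b-1}$, with determinant $|h|^{4(b-1)}>0$, and the identity on $\HH$. The target order $\HH^{b-1}\oplus\HH$ is the product orientation of $\HH^b$. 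Hence the standard orientation of $\HH^{b-1}$ followed by the fibre orientation maps to the orientation of $\HH^b$. Since the sign of the Jacobian is locally constant and the source is connected, this proves the chart description.

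For (ii), I will argue by induction on $b$, cutting down by zero loci of sections. The last coordinate $\ell\mapsto(w\mapsto w_b)$ defines a section of $\mathrm{Hom}_\HH(\gamma,\HH)$ whose zero set is $\HP^{b-2}=\{[w':0]\}$. This section is transverse to zero, as one sees in the chart $[z:1]$ moved to the appropriate coordinate. The normal bundle of $\HP^{b-2}$ is then identified with that dual bundle. The key sign inputs are the following.
\begin{enumerate}
\item The Euler class of the dual quaternionic line bundle, with its canonical orientation, compared with $x=e(\gamma)$. As complex bundles the dual is the conjugate, and $c_2(\bar E)=c_2(E)$ in rank two. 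The real orientations must still be compared explicitly in a fibre.
\item The orientation that $\HP^{b-2}$ inherits as a zero locus, namely ambient $=$ zero locus followed by normal, compared with its own orientation $O$. Both can be read off in the chart, where $O$ is standard by (i).
\end{enumerate}
Each inductive step should contribute a factor $-1$, giving $\langle x^{b-1},[\HP^{b-1},O]\rangle=-\langle x^{b-2},[\HP^{b-2},O]\rangle$. The base case is $\HP^0$, a point, with value $1$. Alternatively I can take $b=2$ as the base, where $\HP^1=S^4$ and $\langle e(\gamma),[S^4]\rangle=-1$ is checked from the clutching function $q\mapsto q$ or $\bar q$ on the equator.

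The main obstacle is the sign bookkeeping in step (ii), namely the orientation comparison in the zero-locus argument. This is where left versus right multiplication and the conjugation $h\mapsto\bar h$ (as in Lemma~\ref{lem:ek-HPb}) enter. I would first do the $b=2$ case by a direct Jacobian computation in the two affine charts of $\HP^1$ to pin down the sign $-1$. Then I would check that the inductive step reproduces exactly the same local computation, since the normal slice to $\HP^{b-2}$ in $\HP^{b-1}$ is a copy of that chart.
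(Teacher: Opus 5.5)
Your part (i) is the paper's argument: the same bundle chart $(z,h)\mapsto h(z,1)$ and the same block-triangular Jacobian, of determinant $|h|^{4(b-1)}$.

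For (ii), the inductive route through the zero locus $\HP^{b-2}$ of the section $w\mapsto w_b$ of $E=\mathrm{Hom}_{\HH}(\gamma,\HH)$ is viable. As written, though, it leaves out the content of (ii), which is a sign. You never fix an orientation of $E$, and you assert rather than compute that each step contributes $-1$. There is also a chart problem. The zero locus $\{[w':0]\}$ is disjoint from the chart $z\mapsto[z:1]$, so (i) as stated says nothing about $O$ near it. You need the chart $u\mapsto[1:u]$, $u=(u_2,\dots,u_b)$. In the bundle chart $(u,h)\mapsto h(1,u)=(h,hu)$ the differential at $u=0$ is $(\upsilon,\eta)\mapsto(\eta,h\upsilon)$. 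Moving a $4$-dimensional block past a $4(b-1)$-dimensional one costs no sign, so this chart is $O$-positive too, and the same holds for the analogous chart of $\HP^{b-2}$.

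The step then closes as follows. Orient $E$ by requiring $\phi\mapsto\phi(w)\in\HH$ to preserve orientation for $w\in\ell\setminus0$; this does not depend on $w$, since $\phi(cw)=c\phi(w)$. In the chart the section is $u\mapsto u_b$, whose differential is the identity on the normal coordinate $u_b$. So the zero-locus orientation of $\HP^{b-2}$ is its own $O$. The real isomorphism $\gamma\to E$, $v\mapsto\langle\,\cdot\,,v\rangle$, reads $h\mapsto\langle w,hw\rangle=\bar h$ for a unit vector $w\in\ell$. It has determinant $-1$, so $e(E)=-x$. Hence $\langle x^{b-1},[\HP^{b-1},O]\rangle=-\langle x^{b-2},[\HP^{b-2},O]\rangle$, and the point $\HP^0$ starts the induction at $1$.

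Both signs must be computed with the same orientation of $E$. The complex orientation from right multiplication by $i$ is the opposite one: with it $e(E)=c_2(E)=x$, and the $-1$ moves into the zero-locus comparison. Taking all the coordinates $w_2,\dots,w_b$ at once removes the induction. They give a section of $(b-1)E$ with the single zero $[e_1]$, of index $+1$.

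The paper instead counts the zeros of $b-1$ projection sections of $(b-1)\gamma$. In the chart $[z:1]$ these read $z\mapsto(\bar z_1,\dots,\bar z_{b-1})/(1+|z|^2)$ and have one zero, of index $(-1)^{3(b-1)}$. This needs no second chart, no orientation of $E$ and no duality for zero loci. In both routes the sign comes from one quaternionic conjugation per factor. In the paper it sits in the sections; in yours it sits in the conjugate-linear identification $\gamma\cong E$.
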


\begin{proof}
(i) Since $\HP^{b-1}$ and $D(\gamma)\setminus0$ are connected, exactly one of the two orientations of $\HP^{b-1}$ works.
In the chart, consider the bundle chart $(z,h)\mapsto h(z,1)$ of $\gamma$. Over the point $[z:1]$ it restricts to
$h\mapsto h(z,1)$, which is the map $h\mapsto hw$, with $w=(z,1)\in\ell\setminus0$, that defines the canonical
orientation of the line $\ell=[z:1]$ (Section~\ref{subsec:ek-statement}). Hence the bundle chart preserves the
canonical orientation of each fibre. Its differential is
$(\zeta,\eta)\mapsto(\eta z+h\zeta,\ \eta)$, which is block triangular with diagonal blocks
left multiplication by $h$ on $\HH^{b-1}$ and the identity of $\HH$, of determinant
$|h|^{4(b-1)}>0$. Hence the standard orientation of the chart is $O$.

(ii) For $\ell\in\HP^{b-1}$ let $s_i(\ell)$ be the orthogonal projection of the basis vector
$e_i$ onto $\ell$, for $i=1,\dots,b-1$. These form a section of $(b-1)\gamma$. With the product orientation of its fibres, the Euler class of
$(b-1)\gamma$ is $e(\gamma)^{b-1}=x^{b-1}$, since the Euler class of a Whitney sum is the cup product of the
Euler classes~\cite[Property~9.6, p.~100]{MS}. A common zero is a line orthogonal to $e_1,\dots,e_{b-1}$, so the only zero is
$\ell_0=\HH e_b$, the centre of the chart. With $\langle v,w\rangle=\sum_jv_j\bar w_j$, which
satisfies $\langle hv,w\rangle=h\langle v,w\rangle$, the projection onto $\HH u$ ($|u|=1$) is
$v\mapsto\langle v,u\rangle u$. For $\ell=[z:1]$ and $u=(z,1)/(1+|z|^2)^{1/2}$ this gives
$s_i=\bigl(\bar z_i/(1+|z|^2)\bigr)(z,1)$. Hence in the chart and its bundle chart the section is
$z\mapsto(\bar z_1,\dots,\bar z_{b-1})/(1+|z|^2)$. Its differential at $0$ is quaternionic conjugation on
each factor $\HH$, of determinant $(-1)^{3(b-1)}=(-1)^{b-1}$. Recall that, for an oriented vector bundle $E$
over a closed oriented manifold $M$ with $\operatorname{rank}E=\dim M$, the Euler number $\langle e(E),[M]\rangle$ is
the sum of the indices of the zeros of any section with isolated zeros. At a zero where the section is
transverse to the zero section, the index is the sign of the determinant of its differential, computed in an
oriented chart of $M$ and an oriented local trivialisation of $E$~\cite[\S12 and Theorem~12.5, p.~147]{MS},
\cite[\S6, proof of Lemma~3, p.~36]{MilnorTDV}. By~(i), the chart $z\mapsto[z:1]$ is oriented by $O$, and the
bundle chart by the canonical orientations of the fibres. So the only zero, at $\ell_0$, has index $(-1)^{b-1}$,
and $\langle x^{b-1},[\HP^{b-1},O]\rangle=(-1)^{b-1}$.
\end{proof}

\begin{lemma}\label{lem:ek-euler}
For every line $\ell_0$, $\langle e(\xi_\beta),[S^4\times\{\ell_0\},o]\rangle=-k(\beta)$.
\end{lemma}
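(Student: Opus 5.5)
The plan is to compute the Euler number of $\xi_\beta|_{S^4\times\{\ell_0\}}$ by the standard obstruction-theoretic count. Take a nowhere-zero section over one hemisphere, transport it through the clutching map to the other hemisphere, and read off the Euler number as the degree of the resulting boundary map $S^3\to\HH\setminus0$. By Lemma~\ref{lem:ek-slice-degree}(ii) the slice degree does not depend on $w_0$. The Euler number over $S^4\times\{\ell_0\}$ is locally constant in $\ell_0$, so it is constant since $\HP^{b-1}$ is connected. We may therefore take $\ell_0=\HH w_0$, with $w_0$ the vector used in Definition~\ref{def:ek-slice-degree}.

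\emph{Step 1: trivialise and identify the clutching function.} Trivialise the line $\ell_0$ by $h\mapsto hw_0$, which is canonically oriented by Section~\ref{subsec:ek-statement}. On both $D^4_R\times\ell_0$ and $\tilde D^4\times\ell_0$ this makes the restricted bundle a product with fibre $\HH$ carrying its standard orientation. The clutching is $\tilde f_a(hw_0)=h\,f_a(w_0)$. By~\eqref{eq:ek-fa} this equals $h\,g(p,w_0)\,w_0$ for $s\ge0$ and $hw_0$ for $s\le0$. In coordinates, the clutching is therefore right multiplication by $\hat g(a)$, where $\hat g(a)=g(p,w_0)$ for $s\ge0$ and $\hat g(a)=1$ for $s\le0$.

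\emph{Step 2: degree of $\hat g$.} Since $g(p,w_0)=\beta(s_{w_0}(p))$ and $g=1$ for $|p|\ge3/5$, the map $\hat g$ factors through the projection of the closed upper hemisphere of $S^3_R$ onto the disk $D^3_R$. That projection collapses the equator, where $\hat g\equiv1$, and $\hat g\equiv1$ on the lower hemisphere. Hence $\deg\hat g$ equals $\pm$ the degree of $\beta\circ s_{w_0}$ on $(D^3,\partial D^3)$, and the latter is $k(\beta)$ because $\varepsilon_\beta=1$ in normal form. For the sign, look at the north pole $(R,0)$ of $S^3_R=\partial D^4_R$. There the outward normal is $\partial_s$, so the boundary orientation is the standard orientation of the $p$-coordinates. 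The projection therefore preserves orientation near the pole, and $\deg\hat g=k(\beta)$ for $S^3_R$ with its boundary orientation from $(D^4_R,o)$.

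\emph{Step 3: the section count.} Take the constant section $1$, that is $w_0$, over $D^4_R$. Through the clutching it becomes $a\mapsto\hat g(a)$ on $\partial\tilde D^4$ in the trivialisation of $\tilde D^4\times\ell_0$. Extend it over $\tilde D^4$ with isolated transverse zeros. By the index formula recalled in the proof of Lemma~\ref{lem:ek-orientation}(ii), the Euler number is the sum of the indices. That sum equals the degree of the boundary map $\partial\tilde D^4\to\HH\setminus0\simeq S^3$, taken with the boundary orientation of $\tilde D^4$ and with $\tilde D^4$ oriented by $o$.

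\emph{Main obstacle: the sign.} Since $S^4=D^4_R\cup\tilde D^4$ is oriented by $o$, the boundary orientation of $\partial\tilde D^4$ is opposite to that of $S^3_R=\partial D^4_R$. The boundary degree is therefore $-\deg\hat g=-k(\beta)$, which gives the claim. Two conventions need care here. First, the clutching $(a,v)\sim(a,\tilde f_a(v))$ goes from the $D^4_R$-side to the $\tilde D^4$-side, so the transported section really is $\hat g$ and not $\hat g^{-1}$. Second, right multiplication by $\hat g$ applied to $1$ gives a map of the same degree as $\hat g$. As a cross-check, I would run the same argument for $b=2$ with $\xi$ replaced by the tautological bundle over $\HP^1=S^4$, and compare the result with Lemma~\ref{lem:ek-orientation}(ii), which gives $\langle x,[\HP^1,O]\rangle=-1$.
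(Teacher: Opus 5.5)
Your proof is correct and is essentially the paper's argument: you trivialise $\xi_\beta$ over $\ell_0=\HH w_0$ by $h\mapsto hw_0$, read off the transition $h'=h\,g_a$, and reduce to the degree of $a\mapsto g_a$ on $S^3_R$, which you compute via the orientation-preserving projection of the upper half of $S^3_R$ onto $D^3_R$. The only difference is which hemisphere carries the constant section: the paper puts it on $\tilde D^4$ and extends over $D^4_R$, so the boundary map is $g_a^{-1}$ and the sign comes from inversion having degree $-1$, whereas you put it on $D^4_R$ and extend over $\tilde D^4$, so the sign comes from $\partial\tilde D^4$ carrying the orientation opposite to that of $\partial D^4_R$; the two computations are equivalent.
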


\begin{proof}
Let $\ell_0=\HH w_0$ with $|w_0|=1$, and trivialise $\xi_\beta$ over $D^4_R\times\{\ell_0\}$ and
$\tilde D^4\times\{\ell_0\}$ by $h\mapsto hw_0$; these trivialisations preserve the canonical
orientation. Recall the map $g$ of Section~\ref{subsec:ek-normal}. Since $\tilde f_a(hw_0)=h\,g_a\,w_0$ with $g_a=g(p,w_0)$ for $s\ge0$ and $g_a=1$
for $s\le0$, the transition over $S^3_R$ is $h'=h\,g_a$, where $h$ and $h'$ are the
coordinates over $D^4_R$ and $\tilde D^4$. The section $h'=1$ has no zeros over $\tilde D^4$, and over
$S^3_R$ it reads $h=g_a^{-1}$. The Euler number is the obstruction to extending this nowhere-zero
section over $D^4_R$~\cite[\S12 and Theorem~12.5, p.~147]{MS}, that is, the degree of
$a\mapsto g_a^{-1}$ on $S^3_R=\partial D^4_R$; equivalently, it is the sum of the indices of the
zeros of any extension with isolated zeros (see the proof of Lemma~\ref{lem:ek-orientation}(ii)). Inversion on $S^3$ has degree
$-1$, since it fixes $1$ and reverses $i,j,k$; so the Euler number is $-\deg(a\mapsto g_a)$.
The closed upper half of $S^3_R$ is the graph of $p\mapsto(R^2-|p|^2)^{1/2}$ over $D^3_R$, and
its projection to $D^3_R$ preserves orientation, because at $(R,0)$ the outward normal is
$\partial_s$. The map $a\mapsto g_a$ is $1$ on the lower half, and on the upper half it is
$p\mapsto g(p,w_0)=\beta(s_{w_0}(p))$. As $\beta\circ s_{w_0}=1$ on $3/5\le|p|\le1$, the degree
is $k(\beta)$.
\end{proof}

The computation in the proof of Lemma~\ref{lem:ek-euler}, applied to $\gamma_1$ over $\HP^1$ with fibre coordinates $h_0$ and $h_1$
over the two affine charts, where $h_1=h_0z$ on $|z|=1$, gives $\langle x,[\HP^1,O]\rangle=-1$, in agreement with
Lemma~\ref{lem:ek-orientation}(ii) at $b=2$.

\subsection{The cohomology of the closed-up filling}\label{subsec:ek-ring}

In this subsection we compute the rational cohomology ring of the space $\Zh$ obtained from $Z$ by capping off
its boundary with a ball (Proposition~\ref{prop:ek-ring}). The Eells--Kuiper invariant is computed from the
Pontryagin numbers of $Z$, evaluated on the relative fundamental class $[Z,\partial Z]$
(Section~\ref{sec:prelim}). As shown below, since $\partial Z$ is a homotopy sphere, these relative numbers can be computed in the
cohomology ring of $\Zh$. That ring is generated by two classes $X$ and $Y$ of degree $4$ with two relations, and we
find the values of its top-degree monomials on the fundamental class.

Since $\partial Z$ is a homotopy $(4m-1)$-sphere, the map
$j\colon H^i(Z,\partial Z;\QQ)\to H^i(Z;\QQ)$ is an isomorphism for $0<i<4m-1$. Let $\Zh=Z\cup C(\partial Z)$. Since $\partial Z$ is a homotopy sphere of
dimension $4m-1\ge11$, it is homeomorphic to $S^{4m-1}$~\cite[Theorem~A, p.~391]{Smale}, so the cone $C(\partial Z)$
is homeomorphic to $D^{4m}$, and $\Zh$ is a closed topological manifold. It need not be smooth, since $\partial Z$
need not be diffeomorphic to the standard sphere, and we use only its rational cohomology. Then
$H^i(\Zh;\QQ)=H^i(Z,\partial Z;\QQ)$ for $i>0$, with the relative cup product, and
$[\Zh]\in H_{4m}(\Zh;\QQ)$ denotes the image of $[Z,\partial Z]$. For $0<i<4m$, we identify
$H^i(\Zh;\QQ)$ with $H^i(Z;\QQ)$ by $j$; a class of $\Zh$ is then restricted to $B$ or $F$ through
$Z$.

Let $y_o\in H^4(S^4;\ZZ)$ be the generator with $\langle y_o,[S^4,o]\rangle=1$. Let $X,Y\in
H^4(\Zh;\QQ)$ be the classes with $X|_B=x$ (pulled back from $\HP^{b-1}$) and $Y=\pi^*y_o$. Let $\ell_0\in\HP^{b-1}$, and let
\[
  \kappa=\langle e(\xi_\beta),[S^4\times\{\ell_0\},o]\rangle .
\]
By Lemma~\ref{lem:ek-euler}, $\kappa=-k(\beta)$; in particular, $\kappa$ does not depend on $\ell_0$.

\begin{proposition}\label{prop:ek-ring}
\begin{enumerate}
\item $H^i(\Zh;\QQ)=0$ unless $i\equiv0\pmod 4$ and $0\le i\le4m$; for $1\le j\le b$ the
  classes $X^j$ and $X^{j-1}Y$ form a basis of $H^{4j}(\Zh;\QQ)$, and the restriction
  $H^{4j}(\Zh;\QQ)\to H^{4j}(B;\QQ)\oplus H^{4j}(F;\QQ)$ is injective.
\item $X|_F=c_2(\gamma_b)$ and $Y|_F=0$, under $F\cong\HP^b$.
\item $Y^2=0$, $X^{b+1}+\kappa X^bY=0$, and $\langle X^bY,[\Zh]\rangle=(-1)^{b-1}$.
\end{enumerate}
Consequently
\[
  H^*(\Zh;\QQ)=\QQ[X,Y]\big/\bigl(Y^2,\ X^{b+1}+\kappa X^bY\bigr),\qquad
  \langle X^{b+1},[\Zh]\rangle=(-1)^b\kappa,
\]
with $X$ and $Y$ of degree $4$.
\end{proposition}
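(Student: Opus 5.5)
I would start from the handle decomposition of Proposition~\ref{prop:ek-filling}(i): $Z=D(\xi_\beta)\cup(\tilde D^4\times D^{4b})$. Since $D(\xi_\beta)$ retracts onto $B=S^4\times\HP^{b-1}$, the rational cohomology of $Z$ agrees with that of $B$ in degrees below $4b-1$. The $4b$-handle is attached along $\tilde D^4\times S^{4b-1}$, so it can add a class in degree $4b$ or kill one in degree $4b-1$. Because $B$ has no odd cohomology, I expect $H^{4b}(Z)$ to be $H^{4b}(B)$ plus one new class; this matches the rank $2$ claimed for $X^b,X^{b-1}Y$. Rather than fight the exact sequence directly, I would use the map $H^*(Z)\to H^*(B)\oplus H^*(F)$. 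In degrees $4j<4b$, restriction to $B$ is already an isomorphism onto $\langle x^j,x^{j-1}y_o\rangle$, which gives the basis and injectivity there. In degree $4b$, the class $x^b$ vanishes on $B$, so $X^b|_B=x^{b-1}y$-multiples only. Its restriction to $F\cong\HP^b$ is $c_2(\gamma_b)^b\neq0$, while $X^{b-1}Y|_F=0$. Together with a dimension count from the Mayer--Vietoris sequence of $D(\xi_\beta)\cup(\tilde D^4\times D^{4b})$, this should give (i). Poincaré duality of $\Zh$ gives $H^{4m}=\QQ$.

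For (ii), $Y=\pi^*y_o$, and $\pi$ is constant on $F$, so $Y|_F=0$. The class $X$ should be $c_2$ of a quaternionic line bundle on $Z$ extending $\xi_\beta$. Over $\tilde D^4\times\HP^b$ I take the pullback of $\gamma_b$, and over $D^4_R\times D(\gamma)$ the pullback of $\gamma$, clutched by $\tilde f_a$. This is compatible with Lemma~\ref{lem:ek-HPb}. Its restriction to $B$ is $\xi_\beta$, not $\gamma$, so $X$ should instead be defined as the unique class with $X|_B=x$. Then $c_2$ of this bundle equals $X-\kappa Y$ or $X+\kappa Y$ on $B$, since $e(\xi_\beta)=x+\kappa y_o$ by Lemma~\ref{lem:ek-euler} and $H^4(B)$. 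I would check that $X|_F=c_2(\gamma_b)$ either from this bundle description or by noting that $H^4(F)$ is detected by $F\cap B=\HP^{b-1}$, where both classes restrict to $x$.

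For (iii), $Y^2=\pi^*(y_o^2)=0$. For the top evaluations I use $[B]$ dual to the Thom class. A class of the form $\alpha\cdot\pi^*y_o$ with $\alpha$ of degree $4b$ evaluates via $Y$ as Poincaré dual to a fibre $\pi^{-1}(a')\supset F$, giving $\langle X^bY,[\Zh]\rangle=\langle c_2(\gamma_b)^b,[\pi^{-1}(a')]\rangle$. Since $\pi^{-1}(a')$ is $F$ with correct orientation, this should be computed like Lemma~\ref{lem:ek-orientation}(ii) with $b$ in place of $b-1$. The sign must be reconciled with the orientation of $Z$ (base $o$, then fibre), which I expect to yield $(-1)^{b-1}$. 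This orientation/sign bookkeeping is the main obstacle. I would handle it by evaluating both sides on the standard case $\beta=1$ and on the chart computation already done.

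For the relation, the Thom class $U$ of $\xi_\beta$ pushed into $Z$ has $U|_B=e(\xi_\beta)=x+\kappa y_o$. Since $X^b|_B=x^b=0$ on $\HP^{b-1}$, the product $X^b\smile U$ equals $X^b|_B$ evaluated through the Thom isomorphism. Hence $\langle X^{b}U\rangle=\langle x^b,[B]\rangle=0$. Writing $U=X+cY$ in $H^4$ and comparing restrictions to $B$ (where $y_o$ appears with coefficient $\kappa$) gives $c=\kappa$, so $X^{b+1}+\kappa X^bY=0$ in top degree. Because $H^{4m}$ is one-dimensional, this is the relation, and $\langle X^{b+1}\rangle=-\kappa(-1)^{b-1}=(-1)^b\kappa$.
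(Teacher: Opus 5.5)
\textbf{The gap: the sign of $\langle X^bY,[\Zh]\rangle$.} Your arguments for (i), (ii) and for the relation $X^{b+1}+\kappa X^bY=0$ follow the paper. The Mayer--Vietoris dimension count in degree $4b$ is equivalent to the paper's excision sequence, and note that $X^b|_B=x^b=0$ exactly. What is missing is the evaluation $\langle X^bY,[\Zh]\rangle=(-1)^{b-1}$: you leave its sign as an expectation, and that sign is part of the statement. Your route is legitimate as far as it goes. $Y=\pi^*y_o$ is the Thom class of $F=\pi^{-1}(a')$ with normal orientation $o$, so $\langle X^bY,[\Zh]\rangle=\langle c_2(\gamma_b)^b,[F,O_F]\rangle$, where $O_F$ is the orientation for which $o$ followed by $O_F$ is the orientation of $Z$. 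But the whole content of the sign lies in identifying $O_F$, and your claim that $F$ carries ``the correct orientation'' is never checked. If you run Lemma~\ref{lem:ek-orientation}(ii) with $b$ in place of $b-1$, as you propose, you get $(-1)^b$ for the chart orientation of $\HP^b$, which is the opposite of the claimed value. Testing on $\beta=1$ cannot settle this: there $\kappa=0$, the ring relations put no constraint on this sign, and the sign does not depend on $\beta$ anyway.

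\textbf{Closing it along your route.} You need to show that $O_F$ is minus the chart orientation.
\begin{enumerate}
\item Along $\tilde D^4\times\HP^{b-1}$ the orientation of $Z$ is $o$, then $O$, then the canonical fibre orientation transported by $d\iota$. Hence $O_F$ is $\iota_*$ of the orientation of $D(\gamma)$ given by $O$ followed by the canonical fibre orientation.
\item For $v=hw\neq0$ with $|w|=1$ one has $\iota(v)=[w:\bar h]=[v/|v|^2:1]$. So in the chart $z\mapsto[z:1]$, $\iota$ is the map $v\mapsto v$ of Lemma~\ref{lem:ek-orientation}(i) followed by the inversion $v\mapsto v/|v|^2$.
\item The first map carries the orientation of $D(\gamma)$ to that of $\HH^b$, and the inversion reverses orientation. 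Therefore $O_F$ is minus the chart orientation, and the value is $-(-1)^b=(-1)^{b-1}$.
\end{enumerate}

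\textbf{The shorter fix, which is the paper's.} Use the Thom class $U$ that you already introduce in your last paragraph, but establish $U=X+\kappa Y$ first. Then apply $\langle a\smile U,[Z,\partial Z]\rangle=\langle a|_B,[B]\rangle$ to $a=X^{b-1}Y$. Since $Y^2=0$, this gives $\langle X^bY,[\Zh]\rangle=\langle x^{b-1}y_o,[B]\rangle=\langle y_o,[S^4,o]\rangle\,\langle x^{b-1},[\HP^{b-1},O]\rangle=(-1)^{b-1}$, and no orientation of $F$ is needed.

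\textbf{One further omission.} The ring presentation still needs the short argument that $\QQ[X,Y]/(Y^2,X^{b+1}+\kappa X^bY)$ surjects onto $H^*(\Zh;\QQ)$ and that the two have equal dimensions in each degree.
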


\begin{proof}
(i) By Proposition~\ref{prop:ek-filling}(i), $Z\simeq B\cup e^{4b}$, so $H^i(Z)\to H^i(B)$ is an
isomorphism for $i\ne4b$, and $H^*(B;\QQ)=\QQ[x,y_o]/(x^b,y_o^2)$ by the K\"unneth theorem.
This gives the vanishing statement for $0<i<4m-1$; by Lefschetz duality
$H^{4m-1}(Z,\partial Z;\QQ)\cong H_1(Z;\QQ)=0$ and $H^{4m}(Z,\partial Z;\QQ)\cong\QQ$. In
degree $4b$, excision gives $H^{4b}(Z,D(\xi_\beta))\cong H^{4b}(D^{4b}\times \tilde D^4,S^{4b-1}\times
\tilde D^4)\cong\ZZ$, concentrated in that degree, and since $H^{4b-1}(B)=0$,
\[
  0\to H^{4b}(Z,D(\xi_\beta);\QQ)\to H^{4b}(Z;\QQ)\to H^{4b}(B;\QQ)\to0
\]
is exact. The first group maps isomorphically to $H^{4b}(F,F\cap D(\xi_\beta);\QQ)$, and then to
$H^{4b}(F;\QQ)$, by excision and because $H^{4b-1}(\HP^{b-1})=H^{4b}(\HP^{b-1})=0$. Hence the
restriction $H^{4b}(Z;\QQ)\to H^{4b}(B;\QQ)\oplus H^{4b}(F;\QQ)$ is injective. For $j<b$ the classes
$X^j$ and $X^{j-1}Y$ restrict to the basis $x^j$, $x^{j-1}y_o$ of $H^{4j}(B;\QQ)$. For $j=b$
they restrict to $(0,c_2(\gamma_b)^b)$ and $(x^{b-1}y_o,0)$, by~(ii), which are independent. In both cases a
basis of $H^{4j}(\Zh;\QQ)$ restricts to linearly independent classes, so the restriction to $B$ and $F$ is
injective.

(ii) $X|_F$ restricts to $x$ on $\HP^{b-1}\subset F$, and $H^4(\HP^b)\to H^4(\HP^{b-1})$ is an
isomorphism; Lemma~\ref{lem:ek-HPb} gives $X|_F=c_2(\gamma_b)$. Since $\pi$ maps $F$ to a point,
$Y|_F=0$.

(iii) We have $Y^2=\pi^*(y_o^2)=0$. Let $E\in H^4(Z,\partial Z)$ be the Thom class of the normal bundle of
$B$, extended by zero. With the orientation of $Z$ given by $B$ followed by $\xi_\beta$, it
satisfies $E|_B=e(\xi_\beta)$ and $\langle a\smile E,[Z,\partial Z]\rangle=\langle a|_B,[B]\rangle$ for
$a\in H^{4m-4}(Z)$~\cite[\S11]{MS}; as in~\cite[(5.6), p.~12]{CDZ} for $b=3$. Restricting $e(\xi_\beta)$ to the
submanifolds $\{a\}\times\HP^{b-1}$, for a point $a\in S^4$, and $S^4\times\{\ell_0\}$ of $B$ gives $e(\xi_\beta)=x+\kappa y_o$, and $H^4(\Zh)\to H^4(B)$ is
an isomorphism; so $E=X+\kappa Y$. Now $x^b=0$ and, by Lemma~\ref{lem:ek-orientation}(ii) and the
product orientation of $B$, we have $\langle x^{b-1}y_o,[B]\rangle=(-1)^{b-1}$. Therefore,
\begin{align*}
  \langle X^bY\rangle&=\langle X^{b-1}Y(X+\kappa Y)\rangle=\langle x^{b-1}y_o,[B]\rangle=(-1)^{b-1},\\
  \langle X^{b+1}+\kappa X^bY\rangle&=\langle X^bE\rangle=\langle x^b,[B]\rangle=0 .
\end{align*}
Since $H^{4m}(\Zh;\QQ)$ is one-dimensional and $\langle X^bY\rangle\ne0$, the second identity is the
relation $X^{b+1}+\kappa X^bY=0$.

The ring $\QQ[X,Y]/(Y^2,X^{b+1}+\kappa X^bY)$ has the basis $X^j,X^{j-1}Y$ ($1\le j\le b$), together
with $1$ and $X^bY$; in degrees above $4m$ it vanishes, since $X^{b+1}Y=-\kappa X^bY^2=0$ and
$X^{b+2}=-\kappa X^{b+1}Y=0$. By~(i) and~(iii) it maps onto $H^*(\Zh;\QQ)$, and both have the same
dimension in each degree.
\end{proof}

\subsection{Pontryagin classes and signature}\label{subsec:ek-pont}

In this subsection we compute the Pontryagin classes of $Z$ (Proposition~\ref{prop:ek-pont}) and its signature
(Lemma~\ref{lem:ek-signature}). These are the inputs of the Eells--Kuiper invariant
(Definition~\ref{def:ek-mu}), which we evaluate in Section~\ref{subsec:ek-evaluation}. We first compute the Pontryagin
classes of a quaternionic line bundle and of $\HP^r$ (Lemma~\ref{lem:ek-pont-HP}). The Pontryagin classes of $Z$
are then determined by their restrictions to $B$ and to $F$.

\begin{lemma}\label{lem:ek-pont-HP}
For a quaternionic line bundle $\eta$, we have $p(\eta)=(1-c_2(\eta))^2$, and
$p(T\HP^r)=(1-x)^{2r+2}(1-4x)^{-1}$ with $x=c_2(\gamma_r)$.
\end{lemma}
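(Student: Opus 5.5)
The proof has two parts: first the line bundle formula, then the tangent bundle of $\HP^r$.

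\emph{The line bundle.} Regard $\eta$ as a complex rank-two bundle $E$ via left multiplication by $i$. Its complex conjugate $\bar E$ is isomorphic to $E$: right multiplication by $j$ on each fibre is complex antilinear for this structure (equivalently, the structure group $S^3=SU(2)$ preserves a quaternionic structure). Hence $c_1(E)=-c_1(E)$, which gives $2c_1=0$. Since we work with classes in $H^*(\,\cdot\,;\QQ)$, or with universal classes over $\HP^\infty$ whose cohomology is torsion-free, we may take $c_1(E)=0$.

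By~\cite[Corollary~15.5]{MS}, $1-p_1+p_2$ equals $c(E)\,c(\bar E)$. With $c_1=0$ this is $(1+c_2)^2$. So $p_1=-2c_2$ and $p_2=c_2^2$, that is, $p(\eta)=1-2c_2+c_2^2=(1-c_2(\eta))^2$. Here $c_2(\eta)=e(\eta)$ for the canonical orientation (Section~\ref{subsec:ek-statement}). The universal case over $\HP^\infty$ has no torsion, so the identity holds integrally as well.

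\emph{The tangent bundle of $\HP^r$.} I follow the standard computation of $T\mathbb{C}P^n$. Write $\gamma=\gamma_r$ and $\gamma^\perp$ for its orthogonal complement in the trivial bundle $\HH^{r+1}$. There is an isomorphism of real bundles
\[
T\HP^r\cong\operatorname{Hom}_{\HH}(\gamma,\gamma^\perp),
\]
given by differentiating curves of lines, exactly as in Milnor--Stasheff's treatment of $\mathbb{C}P^n$. Adding $\operatorname{Hom}_\HH(\gamma,\gamma)$ gives
\[
T\HP^r\oplus\operatorname{Hom}_\HH(\gamma,\gamma)\cong\operatorname{Hom}_\HH(\gamma,\HH^{r+1})\cong(r+1)\,\operatorname{Hom}_\HH(\gamma,\HH).
\]
The bundle $\operatorname{Hom}_\HH(\gamma,\HH)$ is a real rank-four bundle isomorphic to $\gamma$ as a real bundle, via the inner product. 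It is possibly $\bar\gamma$, which has the same Pontryagin classes. So the right side has total Pontryagin class $(1-x)^{2(r+1)}$, up to the usual $2$-torsion error in the Whitney formula, which is irrelevant rationally and absent here since $H^*(\HP^r;\ZZ)$ is torsion-free.

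The step I expect to need the most care is identifying the Pontryagin class of $\operatorname{Hom}_\HH(\gamma,\gamma)$. This is the real rank-four bundle $\underline{\RR}\oplus\mathrm{ad}$, where $\mathrm{ad}$ is the rank-three bundle associated with conjugation of $S^3$ on $\operatorname{Im}\HH$, i.e.\ the $SO(3)$-bundle of the adjoint representation. I would compute its Pontryagin class by complexifying. We have $\mathrm{ad}\otimes\mathbb C\cong\operatorname{Sym}^2E$ with Chern roots $2t,0,-2t$, where $E$ has roots $\pm t$ and $c_2(E)=-t^2=x$. This gives
\[
p(\mathrm{ad})=1+4t^2=1-4x.
\]
Rather than trusting sign conventions blindly, I would check this by restricting to $\HP^1=S^4$, where $\mathrm{ad}$ is the $\Lambda^2_-$-type bundle with $p_1=\mp4$ times the generator.

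Dividing in the torsion-free ring $H^*(\HP^r;\ZZ)=\ZZ[x]/(x^{r+1})$ then gives
\[
p(T\HP^r)=(1-x)^{2r+2}(1-4x)^{-1}.
\]
As a sanity check, $r=1$ gives $p_1=-4x+4x=0$, which agrees with $p_1(S^4)=0$.
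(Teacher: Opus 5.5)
Your proposal is correct and follows the paper's proof. For the first formula you use $c_1=0$ and the Milnor--Stasheff formula, and for the second you use $T\HP^r\oplus\mathrm{Hom}_{\HH}(\gamma_r,\gamma_r)\cong(r+1)\,\mathrm{Hom}_{\HH}(\gamma_r,\HH)$, with $\mathrm{Hom}_{\HH}(\gamma_r,\gamma_r)=\underline{\RR}\oplus\mathrm{ad}$ and $\mathrm{ad}\otimes\mathbb{C}\cong\mathrm{Sym}^2$.

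One small slip: on a left line, the antilinear structure map is left multiplication by $j$, because right multiplication by $j$ commutes with left multiplication by $i$ and does not preserve $\HH w$. Your parenthetical $SU(2)$ argument already gives $c_1=0$ correctly.
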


\begin{proof}
For a complex bundle $\eta$ of rank two with $c_1=0$ we have $p_1=c_1^2-2c_2=-2c_2$ and
$p_2=e^2=c_2^2$; see~\cite[(3.1)]{DHZ}. For the second formula, $T\HP^r\oplus\mathrm{Hom}_\HH(\gamma_r,\gamma_r)
\cong\mathrm{Hom}_\HH(\gamma_r,\HH^{r+1})=(r+1)\,\mathrm{Hom}_\HH(\gamma_r,\HH)$. The real bundle
$\mathrm{Hom}_\HH(\gamma_r,\HH)$ is isomorphic to $\gamma_r$ by $v\mapsto\langle\,\cdot\,,v\rangle$, so its total
Pontryagin class is $(1-x)^2$. The bundle $\mathrm{Hom}_\HH(\gamma_r,\gamma_r)$ is a trivial line plus the
adjoint bundle, whose complexification is $\mathrm{Sym}^2\gamma_r$; with Chern roots $\pm t$ of
$\gamma_r$, those of $\mathrm{Sym}^2\gamma_r$ are $2t,0,-2t$, so $p_1=-c_2(\mathrm{Sym}^2\gamma_r)=-4x$.
The resulting formula for $p(T\HP^r)$ is~\cite[(5.4), p.~10]{CDZ}, derived there from~\cite{CrowleyGoette}. Chen, Deng and Zhang use right lines; the argument above does not depend on whether one chooses left or right lines.
\end{proof}

\begin{proposition}\label{prop:ek-pont}
For $1\le j\le b$, the class $p_j(TZ)\in H^{4j}(\Zh;\QQ)$ is the degree-$4j$ component of
\[
  P=(1-X)^{2b}\,(1-X-\kappa Y)^2\,(1-4X)^{-1}.
\]
\end{proposition}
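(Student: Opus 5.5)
By Proposition~\ref{prop:ek-ring}(i), the restriction $H^{4j}(\Zh;\QQ)\to H^{4j}(B;\QQ)\oplus H^{4j}(F;\QQ)$ is injective for $1\le j\le b$. It therefore suffices to check that $p_j(TZ)$ and the degree-$4j$ component of $P$ have the same restrictions to $B$ and to $F$. Restriction is a ring homomorphism and commutes with taking graded components. So we compute the total Pontryagin classes $p(TZ|_B)$ and $p(TZ|_F)$ and compare them with $P|_B$ and $P|_F$, using $X|_B=x$, $Y|_B=y_o$, $X|_F=c_2(\gamma_b)$ and $Y|_F=0$ (Proposition~\ref{prop:ek-ring}(ii) and the definitions of $X$, $Y$).

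\textbf{Restriction to $F$.} By Proposition~\ref{prop:ek-filling}(iii), $F\cong\HP^b$ has trivial normal bundle. Hence $TZ|_F\cong T\HP^b\oplus\varepsilon^{4}$, and Lemma~\ref{lem:ek-pont-HP} gives
\[
p(TZ|_F)=(1-c_2(\gamma_b))^{2b+2}(1-4c_2(\gamma_b))^{-1}.
\]
On the other side, setting $X=c_2(\gamma_b)$ and $Y=0$ in $P$ gives $(1-X)^{2b}(1-X)^2(1-4X)^{-1}$, which is the same class.

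\textbf{Restriction to $B$.} By Proposition~\ref{prop:ek-filling}(i), $TZ|_B\cong TB\oplus\xi_\beta$. Since $B=S^4\times\HP^{b-1}$ and $S^4$ is stably parallelisable, $p(TB)$ is pulled back from $p(T\HP^{b-1})=(1-x)^{2b}(1-4x)^{-1}$ by Lemma~\ref{lem:ek-pont-HP} with $r=b-1$. For the normal bundle, Lemma~\ref{lem:ek-pont-HP} gives $p(\xi_\beta)=(1-c_2(\xi_\beta))^2$, and $c_2(\xi_\beta)=e(\xi_\beta)=x+\kappa y_o$ (the decomposition found in the proof of Proposition~\ref{prop:ek-ring}(iii), from Lemma~\ref{lem:ek-euler} and the restriction of $\xi_\beta$ to $\{a\}\times\HP^{b-1}$, which is $\gamma$). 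Over $\QQ$ the Pontryagin classes are multiplicative for Whitney sums, so
\[
p(TZ|_B)=(1-x)^{2b}(1-x-\kappa y_o)^2(1-4x)^{-1}.
\]
This is exactly $P|_B$.

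The two restrictions agree in each degree $4j$ with $j\le b$, so injectivity gives the claim. Two points need care.
\begin{enumerate}
\item We must confirm that $\xi_\beta|_{\{a\}\times\HP^{b-1}}\cong\gamma$. This holds because $\xi_\beta$ is built from $D^4_R\times\gamma$ and $\tilde D^4\times\gamma$.
\item We must check that the identification $H^i(\Zh)\cong H^i(Z)$ is compatible with restricting $p_j(TZ)$ to $F$ when $j=b$. This works because $F$ lies in the interior of $Z$ and $4b<4m-1$, so $j$ is an isomorphism in that degree.
\end{enumerate}
Neither is a real obstacle. Sign conventions for $c_2$ versus $e$ do not matter here, because $p$ depends only on $c_2^2$ and the linear term, and both are fixed by Lemma~\ref{lem:ek-pont-HP}.
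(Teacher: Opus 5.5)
Your proposal is correct and follows the paper's proof essentially step by step. It uses injectivity of restriction to $B$ and $F$, then $TZ|_B\cong TB\oplus\xi_\beta$ with $c_2(\xi_\beta)=e(\xi_\beta)=x+\kappa y_o$, and $TZ|_F\cong T\HP^b\oplus\varepsilon^4$.

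One correction to your closing remark: the sign relation between $c_2$ and $e$ does matter, because $p_1=-2c_2$ is linear in it. Your step is valid because the canonical orientation of a quaternionic line bundle is its complex orientation, so $c_2=e$, as set up in Section~\ref{subsec:ek-statement}.
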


\begin{proof}
For $1\le j\le b$ we have $0<4j<4m-1$, so $p_j(TZ)\in H^{4j}(Z;\QQ)$ is a class of $\Zh$
(Section~\ref{subsec:ek-ring}). By Proposition~\ref{prop:ek-ring}(i), the restriction
$H^{4j}(\Zh;\QQ)\to H^{4j}(B;\QQ)\oplus H^{4j}(F;\QQ)$ is injective. Hence it is enough to show that $p(TZ)$
and $P$ have the same restrictions to $B$ and to $F$.

We first consider $B=S^4\times\HP^{b-1}$. Its normal bundle in $Z$ is $\xi_\beta$
(Proposition~\ref{prop:ek-filling}(i)), so $TZ|_B=TB\oplus\xi_\beta$, and $TB$ is the Whitney sum of the
pullbacks of $TS^4$ and $T\HP^{b-1}$ under the two projections. The bundle $TS^4$ is stably trivial. The normal
bundle of $S^4$ in $\RR^5$ is trivial, so $TS^4\oplus\varepsilon^1\cong\varepsilon^5$. Hence
$p(TS^4)=1$~\cite[Lemma~15.2, p.~174, and Example, p.~175]{MS}. In rational cohomology, the total Pontryagin
class of a Whitney sum is the product of the total Pontryagin classes of the summands~\cite[Theorem~15.3,
p.~175]{MS}. By Lemma~\ref{lem:ek-pont-HP} with $r=b-1$, we have $p(T\HP^{b-1})=(1-x)^{2b}(1-4x)^{-1}$. The bundle
$\xi_\beta$ is a quaternionic line bundle, glued from copies of $\gamma$ by the left $\HH$-linear maps
$\tilde f_a$, and $c_2(\xi_\beta)=e(\xi_\beta)=x+\kappa y_o$ (proof of Proposition~\ref{prop:ek-ring}(iii)).
Hence $p(\xi_\beta)=(1-x-\kappa y_o)^2$, by Lemma~\ref{lem:ek-pont-HP}. Therefore,
\[
  p(TZ)|_B=(1-x)^{2b}\,(1-4x)^{-1}\,(1-x-\kappa y_o)^2=P|_B,
\]
since $X|_B=x$ and $Y|_B=y_o$.

We next consider $F$. By Proposition~\ref{prop:ek-filling}(iii), the normal bundle of $F$ in $Z$ is trivial, so
$TZ|_F\cong T\HP^b\oplus\varepsilon^4$. By~\cite[Lemma~15.2, p.~174]{MS} and Lemma~\ref{lem:ek-pont-HP} with
$r=b$,
\[
  p(TZ)|_F=p(T\HP^b)=(1-x_F)^{2b+2}(1-4x_F)^{-1},\qquad x_F=c_2(\gamma_b).
\]
By Proposition~\ref{prop:ek-ring}(ii), $X|_F=x_F$ and $Y|_F=0$. Hence
\[
  P|_F=(1-x_F)^{2b}(1-x_F)^2(1-4x_F)^{-1}=(1-x_F)^{2b+2}(1-4x_F)^{-1}=p(TZ)|_F.
\]
\end{proof}

\begin{remark}\label{rem:ek-pont-CDZ}
For $b=3$, the classes $p_1$, $p_2$ and $p_3$ of Proposition~\ref{prop:ek-pont} are those
of~\cite[Proposition~5.2, (5.5), p.~11]{CDZ}, with the integer $k$ of~\cite{CDZ} in place of $\kappa$. The
restriction to $F$ is used only in degree $4b$, where it determines the coefficient of $X^b$ in $p_b$; the class
$X^b$ vanishes on $B$. That coefficient enters the Eells--Kuiper invariant only through
$\langle p_1p_b,[\Zh]\rangle$, with the factor $\langle p_1X^b,[\Zh]\rangle=(-1)^b(4-2b)\kappa$
(Proposition~\ref{prop:ek-ring}(iii)), which vanishes for $b=2$.
\end{remark}

We now show that the signature of $Z$ vanishes, so that only Pontryagin numbers contribute to
$\mu(\Sigma_{\rho,\beta})$. Recall from Section~\ref{subsec:prelim-ek} that $\sgn$ denotes the signature.

\begin{lemma}\label{lem:ek-signature}
$\sgn(Z)=0$.
\end{lemma}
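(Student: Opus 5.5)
The plan is to compute the intersection form on the middle cohomology directly from the ring structure in Proposition~\ref{prop:ek-ring}. By definition (Section~\ref{subsec:prelim-ek}), $\sgn(Z)$ is the signature of the form $(\alpha,\alpha')\mapsto\langle\alpha\smile\alpha',[Z,\partial Z]\rangle$ on $H^{2m}(Z,\partial Z;\RR)$. Since $0<2m<4m$, the identification $H^{i}(\Zh;\QQ)=H^i(Z,\partial Z;\QQ)$ for $i>0$ of Section~\ref{subsec:ek-ring} is compatible with the relative cup product and with $[\Zh]$. So it suffices to compute the signature of the cup-product pairing on $H^{2m}(\Zh;\QQ)$, evaluated on $[\Zh]$, and to pass to real coefficients, which does not change the signature.

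\emph{Case $m$ odd.} Then $2m\equiv2\pmod4$. By Proposition~\ref{prop:ek-ring}(i), $H^{2m}(\Zh;\QQ)=0$, so the form is zero and $\sgn(Z)=0$.

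\emph{Case $m$ even.} Write $2m=4j$ with $j=m/2$. Since $b\ge2$, we have $m\ge3$, hence $m\ge4$ and $2\le j\le b$. By Proposition~\ref{prop:ek-ring}(i), the classes $X^j$ and $X^{j-1}Y$ form a basis of $H^{4j}(\Zh;\QQ)$. I will compute the Gram matrix using the relations and evaluations of Proposition~\ref{prop:ek-ring}(iii):
\begin{itemize}
\item $\langle X^j\cdot X^j\rangle=\langle X^{b+1}\rangle=(-1)^b\kappa$;
\item $\langle X^j\cdot X^{j-1}Y\rangle=\langle X^bY\rangle=(-1)^{b-1}$;
\item $\langle X^{j-1}Y\cdot X^{j-1}Y\rangle=0$, because $Y^2=0$.
\end{itemize}
The Gram matrix is therefore
\[
\begin{pmatrix}(-1)^b\kappa&(-1)^{b-1}\\(-1)^{b-1}&0\end{pmatrix}.
\]
Its determinant is $-1<0$. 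A nondegenerate real symmetric $2\times2$ form with negative determinant has one positive and one negative eigenvalue, so its signature is $0$. This holds regardless of the value of $\kappa=-k(\beta)$.

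There is no serious obstacle. The only points needing care are that the dimension count $j\le b$ places the middle degree within the range where Proposition~\ref{prop:ek-ring}(i) supplies the basis, and that the relative form on $Z$ agrees with the form on $\Zh$.
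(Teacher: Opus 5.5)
Your proposal is correct and follows the paper's proof: you use the vanishing of $H^{2m}(\Zh;\QQ)$ when $m$ is odd, and when $m$ is even the Gram matrix in the basis $X^j,X^{j-1}Y$ from Proposition~\ref{prop:ek-ring}, whose determinant is $-1$. You keep the signs $(-1)^b\kappa$ and $(-1)^{b-1}$ unspecialised, whereas the paper substitutes $-\kappa$ and $1$ since $b$ is odd there; this makes no difference.
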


\begin{proof}
By definition, $\sgn(Z)$ is the signature of the symmetric bilinear form
$(\alpha,\alpha')\mapsto\langle\alpha\smile\alpha',[Z,\partial Z]\rangle$ on $H^{2m}(Z,\partial Z;\RR)$. Under the
identification $H^{2m}(\Zh;\QQ)=H^{2m}(Z,\partial Z;\QQ)$ of Section~\ref{subsec:ek-ring}, which respects cup
products and takes $[Z,\partial Z]$ to $[\Zh]$, this is the form
$(\alpha,\alpha')\mapsto\langle\alpha\smile\alpha',[\Zh]\rangle$ on $H^{2m}(\Zh;\RR)$.

If $b$ is even, then $m=b+1$ is odd, so $2m\equiv2\pmod4$, and $H^{2m}(\Zh;\QQ)=0$ by
Proposition~\ref{prop:ek-ring}(i). Hence the form is zero and $\sgn(Z)=0$.

If $b$ is odd, let $j=(b+1)/2$. Then $2m=4j$ and $1\le j\le b$, so $X^j$ and $X^{j-1}Y$ form a basis of
$H^{2m}(\Zh;\QQ)$ by Proposition~\ref{prop:ek-ring}(i). Since $b-1$ is even, Proposition~\ref{prop:ek-ring}(iii)
gives
\[
  \langle X^j\smile X^j,[\Zh]\rangle=\langle X^{b+1},[\Zh]\rangle=-\kappa,\qquad
  \langle X^j\smile X^{j-1}Y,[\Zh]\rangle=\langle X^bY,[\Zh]\rangle=1,
\]
and $X^{j-1}Y\smile X^{j-1}Y=0$, since $Y^2=0$. Hence the Gram matrix of the form in this basis is
$\left(\begin{smallmatrix}-\kappa&1\\1&0\end{smallmatrix}\right)$, of determinant $-1$. The determinant is the
product of the two eigenvalues, so one eigenvalue is positive and the other is negative. Hence
$\sgn(Z)=1-1=0$.
\end{proof}

\subsection{The Eells--Kuiper invariant}\label{subsec:ek-mu}

In this subsection we recall the Eells--Kuiper invariant $\mu(\Sigma)$ of an oriented homotopy $(4m-1)$-sphere
$\Sigma$ (Definition~\ref{def:ek-mu}), in the form in which we evaluate it for the star spheres in
Section~\ref{subsec:ek-evaluation}. It combines the Pontryagin numbers and the signature of a compact spin
manifold $W$ bounded by $\Sigma$, and the combination is chosen so that $\mu(\Sigma)$ does not depend on $W$. We
first identify the polynomial in the Pontryagin classes that enters (Lemma~\ref{lem:ek-normalisation}).

Recall the polynomials $L_m$ and $\Ahat_m$ from Section~\ref{subsec:ek-statement}.

\begin{lemma}\label{lem:ek-normalisation}
The ratio of the coefficients of $p_m$ in $\Ahat_m$ and in $L_m$ is $t_m$. Hence
$N_m=\Ahat_m-t_mL_m$ is a polynomial in $p_1,\dots,p_{m-1}$, each of whose monomials has at
least two factors. For $m=2,3,4$,
\[
  \tfrac{1}{a_2}N_2=\tfrac{p_1^2}{896},\quad
  \tfrac{1}{a_3}N_3=\tfrac{-3p_1^3+4p_1p_2}{190464},\quad
  \tfrac{1}{a_4}N_4=\tfrac{153p_1^4-360p_1^2p_2+80p_2^2+192p_1p_3}{187269120},
\]
and $t_2/a_2=-4/896$, $t_3/a_3=-24/190464$, $t_4/a_4=-1/65024$.
\end{lemma}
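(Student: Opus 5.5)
The plan is to treat the three assertions of Lemma~\ref{lem:ek-normalisation} in turn: the ratio of the $p_m$-coefficients, the structure of the monomials of $N_m$, and the explicit values for $m=2,3,4$.

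\emph{The coefficients of $p_m$.} For a multiplicative sequence with characteristic power series $Q(z)=1+\lambda_1z+\lambda_2z^2+\cdots$, the coefficient of $p_m$ in $K_m$ is read off in the standard way from \cite[Problem~19-C]{MS}. Write $p=\prod_i(1+z_i)$ and use $\log K(p)=\sum_i\log Q(z_i)$. The coefficient of $p_m$ then equals $(-1)^{m-1}m$ times the coefficient $c_m$ of $z^m$ in $\log Q(z)$, because $p_m$ enters the power sum $\sum_iz_i^m$ linearly with coefficient $(-1)^{m-1}m$.

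For $Q_L(z)=\sqrt z/\tanh\sqrt z$ and $Q_{\Ahat}(z)=\tfrac12\sqrt z/\sinh\tfrac12\sqrt z$ the logarithmic derivatives are elementary:
\[
 \tfrac{d}{du}\log\frac{u}{\tanh u}=\frac1u-\frac{2}{\sinh 2u},\qquad
 \tfrac{d}{du}\log\frac{u/2}{\sinh(u/2)}=\frac1u-\tfrac12\coth\tfrac u2 .
\]
Expanding both in Bernoulli numbers gives the coefficients of $u^{2m}$ in the two logarithms:
\[
 c^L_m=\frac{2^{2m}(2^{2m-1}-1)B_{2m}}{m(2m)!}\cdot(\pm1),\qquad
 c^{\Ahat}_m=-\frac{B_{2m}}{2m(2m)!}\cdot(\pm1),
\]
with the same sign in both. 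This recovers the classical values $s_m(L)=2^{2m}(2^{2m-1}-1)|B_{2m}|/(2m)!$ and $s_m(\Ahat)=-|B_{2m}|/(2\,(2m)!)$, up to the common factor. Their ratio is
\[
 -\frac{1}{2^{2m+1}(2^{2m-1}-1)}=t_m ,
\]
which is also \cite[(9)]{EK}. I would state this as a citation plus this short check. The only real care needed is the sign convention of $p_m$ versus $\sum z_i^m$, and that convention cancels in the ratio.

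\emph{Structure of $N_m$.} Since the $p_m$ terms cancel, $N_m$ involves only $p_1,\dots,p_{m-1}$. It is homogeneous of degree $4m$, so each monomial $p_{i_1}\cdots p_{i_r}$ has $\sum i_k=m$ with every $i_k<m$. This forces $r\ge2$, so every monomial has at least two factors.

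\emph{Explicit values.} This step is routine but error-prone, and it is the main obstacle only in the sense of bookkeeping. I would use the standard polynomials:
\[
 L_2=\tfrac{7p_2-p_1^2}{45},\qquad L_3=\tfrac{62p_3-13p_1p_2+2p_1^3}{945},
\]
\[
 L_4=\tfrac{381p_4-71p_1p_3-19p_2^2+22p_1^2p_2-3p_1^4}{14175},
\]
and the corresponding $\Ahat_2$, $\Ahat_3$, $\Ahat_4$ from Hirzebruch. For each $m$ I would form $\Ahat_m-t_mL_m$ with $t_2=-1/224$, $t_3=-1/3968$, $t_4=-1/65024$, then divide by $a_m$. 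Checking that the $p_m$ coefficient vanishes serves as a consistency test.

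The stated values $t_m/a_m$ follow directly. For example, $t_2/a_2=-1/224=-4/896$ and $t_3/a_3=-1/7936=-24/190464$.
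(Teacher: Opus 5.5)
Your proposal is correct and follows the paper's proof. The paper likewise takes the coefficient of $p_m$ in $K_m$ to be $(-1)^{m-1}m\,c_m$, with $c_m$ the $z^m$-coefficient of $\log Q$, obtains $c_m$ for $Q_L$ and $Q_{\Ahat}$ by Bernoulli expansions of logarithmic derivatives, takes the ratio to get $t_m$, and treats the cases $m=2,3,4$ as a direct expansion; your derivatives $1/u-2/\sinh 2u$ and $1/u-\tfrac12\coth\tfrac u2$ are an equivalent variant of the paper's $\log(\sinh x/x)$ and $\log\cosh x$, and the sign you left as $\pm1$ is $+1$ in both formulas.
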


\begin{proof}
Write the total Pontryagin class formally as $p=\prod_i(1+z_i)$, so that $p_j$ is the $j$-th elementary
symmetric function of the $z_i$, and let $s_j=\sum_iz_i^j$ be the power sums. If $\log Q(z)=\sum_{j\ge1}c_jz^j$,
then the multiplicative sequence $K$ of $Q$ satisfies
\[
  K(p)=\prod_iQ(z_i)=\exp\Bigl(\sum_{j\ge1}c_js_j\Bigr).
\]
By Newton's formula~\cite[Problem~16-A, p.~195]{MS}, each $s_j$ is a polynomial in $p_1,\dots,p_j$, and
\[
  s_m=(-1)^{m-1}m\,p_m+(\text{a polynomial in }p_1,\dots,p_{m-1}).
\]
In the component of degree $4m$ of $\exp(\sum_jc_js_j)$, the only term that contains $p_m$ is $c_ms_m$; every
other term is a product of at least two power sums $s_j$ with $j<m$. Hence the coefficient of $p_m$ in $K_m$ is
$(-1)^{m-1}m\,c_m$ (compare~\cite[Problem~19-B, p.~230]{MS}).

We now compute $c_m$ for $Q_L$ and $Q_{\Ahat}$. Let $B_n$ be the Bernoulli numbers, defined by
$t/(e^t-1)=\sum_{n\ge0}B_nt^n/n!$, so that $(-1)^{n-1}B_{2n}=|B_{2n}|$ for $n\ge1$. Then
\[
  \frac{x}{\tanh x}=\sum_{n\ge0}\frac{2^{2n}B_{2n}}{(2n)!}\,x^{2n}
\]
\cite[Appendix~B, p.~281]{MS}, where Milnor and Stasheff write $B_n$ for $|B_{2n}|$. Since
$\frac{d}{dx}\log(\sinh x/x)=1/\tanh x-1/x$ and $\frac{d}{dx}\log\cosh x=\tanh x=2/\tanh 2x-1/\tanh
x$~\cite[p.~282]{MS}, integration gives
\[
  \log\frac{\sinh x}{x}=\sum_{n\ge1}\frac{2^{2n}B_{2n}}{2n\,(2n)!}\,x^{2n},\qquad
  \log\cosh x=\sum_{n\ge1}\frac{2^{2n}(2^{2n}-1)B_{2n}}{2n\,(2n)!}\,x^{2n}.
\]
Since $Q_{\Ahat}(z)=x/\sinh x$ with $x=\tfrac12\sqrt z$, and $Q_L(z)=x\cosh x/\sinh x$ with $x=\sqrt z$, we obtain
\[
  c_m(Q_{\Ahat})=-\frac{B_{2m}}{2m\,(2m)!},\qquad
  c_m(Q_L)=\frac{2^{2m+1}(2^{2m-1}-1)B_{2m}}{2m\,(2m)!}.
\]
Multiplying by $(-1)^{m-1}m$, we find that the coefficient of $p_m$ is $-|B_{2m}|/(2\,(2m)!)$ in $\Ahat_m$ and
$2^{2m}(2^{2m-1}-1)|B_{2m}|/(2m)!$ in $L_m$. The latter is the coefficient given
in~\cite[p.~965]{MilnorDiffSpheres} and~\cite[Problem~19-C, p.~230]{MS}. The ratio of the two coefficients is
$-1/\bigl(2^{2m+1}(2^{2m-1}-1)\bigr)=t_m$, the number of~\cite[(9), p.~99]{EK}. Hence $p_m$ does not
occur in $N_m=\Ahat_m-t_mL_m$. The only monomial of weight $m$ with one factor is $p_m$, so every monomial of
$N_m$ has at least two factors. The three displayed
polynomials are a direct expansion; they are~\cite[(3.7), p.~7]{DHZ} for $m=3$,
\cite[(1.3), p.~3, and (6.3), p.~12]{CDZ} for $m=4$, and the formula of~\cite[(11), p.~101]{EK}
for $m=2$.
\end{proof}

\begin{definition}\label{def:ek-mu}
Let $\Sigma$ be an oriented homotopy $(4m-1)$-sphere, $m\ge2$, and let $W$ be a compact spin $4m$-manifold whose
boundary $\partial W$, with the boundary orientation, is $\Sigma$ as an oriented manifold. Here a basis of
$T_y\Sigma$ is positively oriented if an outward-pointing vector followed by this basis is a positively oriented
basis of $T_yW$. Since $\Sigma$ is a homotopy sphere, the map $j\colon H^{4i}(W,\Sigma;\QQ)\to H^{4i}(W;\QQ)$ is
an isomorphism for $0<i<m$. Hence each $p_i(TW)$, $0<i<m$, is the image under $j$ of a unique class
$\bar p_i\in H^{4i}(W,\Sigma;\QQ)$~\cite[Conditions~$\mu$(a), p.~96]{EK}. For a monomial $p_{i_1}\cdots p_{i_t}$ of
weight $i_1+\dots+i_t=m$ with $t\ge2$, so that every $i_s<m$, let
\[
  \langle p_{i_1}\cdots p_{i_t},[W,\Sigma]\rangle=\langle\bar p_{i_1}\smile\cdots\smile\bar p_{i_t},[W,\Sigma]\rangle
\]
\cite[(1), p.~94]{EK}, and extend this linearly. By Lemma~\ref{lem:ek-normalisation}, the polynomial
$N_m=\Ahat_m-t_mL_m$ is a linear combination of such monomials, so this defines
$\langle N_m(p(TW)),[W,\Sigma]\rangle$. The \emph{Eells--Kuiper invariant} of $\Sigma$ is
\[
  \mu(\Sigma)=\frac{1}{a_m}\Bigl(\langle N_m(p(TW)),[W,\Sigma]\rangle+t_m\,\sgn(W)\Bigr)\in\QQ/\ZZ .
\]
\end{definition}

This is the invariant of Eells and Kuiper~\cite[(6) and~(7), p.~97]{EK}, written in our notation. We write
$\sgn(W)$ for their $\tau[W]$ (Section~\ref{subsec:prelim-ek}), since $\sigma$ denotes the gluing map. The numbers $t_m$~\cite[(9), p.~99]{EK} and $a_m=4/(3+(-1)^m)$~\cite[p.~97]{EK} are theirs.

The value of $\mu(\Sigma)$ does not depend on $W$~\cite[Theorem, p.~97]{EK}. We
recall the argument~\cite[pp.~97--98]{EK}. Let $W'$ be a second compact spin manifold with oriented boundary
$\Sigma$, and let $M=W\cup_\Sigma(-W')$. Since $H^1(\Sigma;\ZZ/2)=0$, the closed manifold $M$ is
spin~\cite[p.~98]{EK}. By the additivity of relative Pontryagin numbers and of
signatures~\cite[(4), p.~95, and p.~98]{EK},
\begin{gather*}
  \langle N_m(p(TM)),[M]\rangle=\langle N_m(p(TW)),[W,\Sigma]\rangle-\langle N_m(p(TW')),[W',\Sigma]\rangle,\\
  \sgn(M)=\sgn(W)-\sgn(W').
\end{gather*}
By Hirzebruch's signature theorem~\cite[Theorem~19.4, p.~224]{MS},
$\sgn(M)=L_m[M]$, and hence
\[
  \langle N_m(p(TM)),[M]\rangle+t_m\sgn(M)=\Ahat[M].
\]
By a theorem of Atiyah and Hirzebruch, $\Ahat[M]$ is an integer for every closed spin manifold $M$, and
an even integer if $\dim M\equiv4\pmod8$~\cite[Corollary~2, p.~279]{AtiyahHirzebruch}. Since $4m\equiv4\pmod8$
if and only if $m$ is odd, $\Ahat[M]\in a_m\ZZ$. Hence the values of $\mu(\Sigma)$ computed from $W$ and from $W'$
differ by the integer $\Ahat[M]/a_m$.

The value of $\mu$ on the boundary of the parallelisable plumbing of signature $8$ is
$8t_m/a_m=-1/(a_m2^{2m-2}(2^{2m-1}-1))$, which is $-1/992$ for $m=3$ and $-1/8128$ for
$m=4$, as in~\cite[p.~7]{DHZ} and~\cite[(1.3)]{CDZ}.

\subsection{Evaluation}\label{subsec:ek-evaluation}

In this subsection we evaluate the Pontryagin numbers of $Z$ that enter the Eells--Kuiper invariant, and we prove
Theorem~\ref{thm:ek}. By Proposition~\ref{prop:ek-pont}, the Pontryagin classes of $Z$ are components of the
class $P$. The class $P$ is a product of factors $1+z$ with $z$ of degree $4$, together with the inverse of one such
factor. A multiplicative sequence turns such a product into a product of values of its characteristic power series,
and we read off the result in the ring of Proposition~\ref{prop:ek-ring} (Proposition~\ref{prop:ek-evaluation}).

We use the multiplicative sequences of Section~\ref{subsec:ek-statement} in the algebra $A$ with
$A^n=H^{4n}(\Zh;\QQ)$. By
Proposition~\ref{prop:ek-ring}, $H^*(\Zh;\QQ)$ is concentrated in degrees divisible by $4$, so $A$ is commutative,
and $A^n=0$ for $n>m$. A class of degree $4$ in cohomology has degree $1$ in $A$.

\begin{proposition}\label{prop:ek-evaluation}
Let $K$ be the multiplicative sequence of a power series $Q$ with $Q(0)=1$, and let $P$ be
the class of Proposition~\ref{prop:ek-pont}. We take the components of $P$ of degree above $4b$ to be the
formal ones. For $j\ge1$, let $P_j\in H^{4j}(\Zh;\QQ)$ be the component of degree $4j$ of $P$, so that
$P_j=p_j(TZ)$ for $j\le b$ by Proposition~\ref{prop:ek-pont}, while for $j>b$ the class $P_j$ is defined by this
formula alone. Then, with $K_m(P)=K_m(P_1,\dots,P_m)$,
\[
  \langle K_m(P),[\Zh]\rangle=-\kappa\,F_Q(b),
\]
with $F_Q(b)$ as in~\eqref{eq:ek-FQ}.
\end{proposition}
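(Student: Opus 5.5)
The idea is to use multiplicativity of $K$ to write $K(P)$ in closed form, and then read off the top-degree part in the ring of Proposition~\ref{prop:ek-ring}. Write $P=(1+(-X))^{2b}(1+(-X-\kappa Y))^2(1+(-4X))^{-1}$ as a formal product in $A^\Pi$. Multiplicativity gives $K(a^{-1})=K(a)^{-1}$, and $K(1+z)=Q(z)$ for $z$ of degree one. Hence
\[
 K(P)=Q(-X)^{2b}\,Q(-X-\kappa Y)^2\,Q(-4X)^{-1}.
\]
Its degree-$m$ component is $K_m(P_1,\dots,P_m)$; here the formal components $P_j$ with $j>b$ are exactly those of this product, so no truncation issue arises. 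Since $A^n=0$ for $n>m$, all identities hold in the finite-dimensional ring.

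Next, use $Y^2=0$ to linearise in $Y$. Put $U=-X$. Then $Q(U-\kappa Y)=Q(U)-\kappa YQ'(U)$, so
\[
 K(P)=\frac{Q(U)^{2b+2}}{Q(4U)}-\kappa Y\,\frac{2Q(U)^{2b+1}Q'(U)}{Q(4U)}.
\]
The second fraction equals $\frac{1}{b+1}\bigl(Q(U)^{2b+2}\bigr)'/Q(4U)$. Taking degree-$m$ parts, $K_m(P)=c_1U^{m}-\kappa c_2YU^{b}$, where
\[
 c_1=[U^{b+1}]\frac{Q^{2b+2}}{Q(4U)},\qquad c_2=\frac{1}{b+1}[U^b]\frac{(Q^{2b+2})'}{Q(4U)}.
\]

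Finally, evaluate on $[\Zh]$ using Proposition~\ref{prop:ek-ring}(iii): $\langle X^{b+1}\rangle=(-1)^b\kappa$ and $\langle X^bY\rangle=(-1)^{b-1}$. Thus $\langle U^{b+1}\rangle=(-1)^{b+1}(-1)^b\kappa=-\kappa$, and $\langle YU^b\rangle=(-1)^b(-1)^{b-1}=-1$. This gives
\[
 \langle K_m(P),[\Zh]\rangle=-\kappa c_1+\kappa c_2=-\kappa(c_1-c_2)=-\kappa F_Q(b).
\]

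The main point needing care is the justification that $K$ applied to the formal components equals the product of $Q$-values, including the inverse factor $(1-4X)^{-1}$. This follows from uniqueness and multiplicativity of $K$ in $A^\Pi$: $K$ is a ring homomorphism on units with leading term $1$, so $K(a)K(a^{-1})=K(1)=1$.
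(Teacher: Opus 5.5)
Your proposal is correct and follows the paper's proof essentially step for step: the same factorisation of $P$ into $2b+2$ factors $1+z$ and one inverse, the same linearisation in $Y$ using $Y^2=0$, the same identity $2Q^{2b+1}Q'=\frac{1}{b+1}(Q^{2b+2})'$, and the same sign bookkeeping with $\langle X^{b+1}\rangle=(-1)^b\kappa$ and $\langle X^bY\rangle=(-1)^{b-1}$. One small imprecision: $K(1+z)=Q(z)$ for $z\in A^1$ follows from naturality, not from uniqueness of $K$; the graded homomorphism $\QQ[t]\to A$, $t\mapsto z$, commutes with the polynomials $K_n$, and this is how the paper justifies the step.
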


\begin{proof}
In $A^\Pi$,
\[
  P=(1-X)^{2b}\,(1-X-\kappa Y)^2\,(1-4X)^{-1}
\]
is a product of $2b+2$ elements $1+z$ with $z\in A^1$, namely $z=-X$ ($2b$ times) and $z=-X-\kappa Y$ (twice),
and of the inverse of $1+z$ with $z=-4X$. We use two properties of $K$.
\begin{enumerate}
\item[(a)] For $z\in A^1$, we have $K(1+z)=Q(z)$, since the homomorphism of graded algebras $\QQ[t]\to A$, $t\mapsto z$,
  with $t$ of degree $1$, carries $1+t$ to $1+z$ and commutes with $K$. It commutes with $K$ because each $K_n$ is a
  polynomial. Hence $K(1+z)$ is the image of $K(1+t)=Q(t)$, which is $Q(z)$.
\item[(b)] $K(a^{-1})=K(a)^{-1}$ for $a\in A^\Pi$ with leading term $1$, since $K(a)K(a^{-1})=K(1)=1$.
\end{enumerate}
By multiplicativity, (a) and (b),
\[
  K(P)=Q(-X)^{2b}\,Q(-X-\kappa Y)^2\,Q(-4X)^{-1}.
\]
All these power series are finite sums, since $A^n=0$ for $n>m$. Since $Y^2=0$, we have
$(-X-\kappa Y)^n=(-X)^n-n\kappa(-X)^{n-1}Y$ for every $n\ge1$. Hence $f(-X-\kappa Y)=f(-X)-\kappa Y\,f'(-X)$ for
every power series $f$, and with $f=Q^2$ we obtain $Q(-X-\kappa Y)^2=Q(-X)^2-2\kappa Y\,Q(-X)\,Q'(-X)$. Since
$2b+2=2m$, it follows that
\[
  K(P)=\frac{Q(-X)^{2m}}{Q(-4X)}-2\kappa Y\,\frac{Q(-X)^{2m-1}Q'(-X)}{Q(-4X)} .
\]
Write $U=-X$. The component of degree $4m$ of the first term is the coefficient of $U^m$ times $(-X)^m$, and that of
the second term comes from the coefficient of $U^b$, since $Y$ has degree $4$. Hence the component of degree $4m$
of $K(P)$ is $\alpha X^m+\alpha' X^{m-1}Y$, with
\[
  \alpha=(-1)^m[U^m]\frac{Q(U)^{2m}}{Q(4U)},\qquad
  \alpha'=-\frac{\kappa}{m}(-1)^{b}[U^{b}]\frac{(Q(U)^{2m})'}{Q(4U)},
\]
using $2m\,Q^{2m-1}Q'=(Q^{2m})'$. By Proposition~\ref{prop:ek-ring},
$\langle X^m\rangle=(-1)^b\kappa$ and $\langle X^{m-1}Y\rangle=(-1)^{b-1}$, so
\begin{align*}
  \langle K_m(P)\rangle&=(-1)^b\kappa\,\alpha+(-1)^{b-1}\alpha'\\
  &=-\kappa\Bigl([U^m]\frac{Q^{2m}}{Q(4U)}-\frac1m[U^b]\frac{(Q^{2m})'}{Q(4U)}\Bigr)=-\kappa F_Q(b).\qedhere
\end{align*}
\end{proof}

With these results in hand, we can now prove our factorisation of the Eells--Kuiper invariant for the star
spheres we consider.

\begin{proof}[Proof of Theorem~\ref{thm:ek}]
We may assume that $\beta$ is in normal form (Lemma~\ref{lem:ek-normal-form} and the reduction that follows it).
By Proposition~\ref{prop:ek-filling}(ii) and~(iv), $Z$ is a compact spin manifold whose oriented boundary is
$\Sigma_{\rho,\beta}$, a homotopy $(4m-1)$-sphere, so Definition~\ref{def:ek-mu} applies with $W=Z$. The polynomial $N_m$ involves only $p_1,\dots,p_b$ (Lemma~\ref{lem:ek-normalisation}), we
have $P_i=p_i(TZ)$ for $i\le b$ (Proposition~\ref{prop:ek-pont}), and the classes $\bar p_i=j^{-1}(p_i(TZ))$ of
Definition~\ref{def:ek-mu} correspond to the classes $P_i$ under the identification of $H^*(\Zh;\QQ)$ with $H^*(Z,\partial Z;\QQ)$ in positive degrees (Section~\ref{subsec:ek-ring}). Hence
$\langle N_m(p(TZ)),[Z,\partial Z]\rangle=\langle N_m(P),[\Zh]\rangle$. As $N_m=\Ahat_m-t_mL_m$ and $N_m$ has no
$p_m$ term, Proposition~\ref{prop:ek-evaluation} gives
$\langle N_m(P),[\Zh]\rangle=-\kappa\bigl(F_{\Ahat}(b)-t_mF_L(b)\bigr)=-\kappa\,a_mr_b$. By
Lemma~\ref{lem:ek-signature}, $\sgn(Z)=0$ and by Lemma~\ref{lem:ek-euler}, $\kappa=-k(\beta)$. Therefore,
Definition~\ref{def:ek-mu} gives $\mu(\Sigma_{\rho,\beta})=-\kappa\,r_b=k(\beta)\,r_b$.
\end{proof}

\begin{remark}[Signs]\label{rem:ek-signs}
Two signs in the proof of Theorem~\ref{thm:ek} depend on orientation conventions. First, in the proof of
Proposition~\ref{prop:ek-evaluation}, the sign $\langle X^bY,[\Zh]\rangle=(-1)^{b-1}$ cancels the sign $(-1)^m$
produced by the substitution $U=-X$, so no factor $(-1)^b$ survives in $-\kappa F_Q(b)$. The sign $(-1)^{b-1}$ comes from the
orientation $O$ of $\HP^{b-1}$ (Lemma~\ref{lem:ek-orientation}(ii)). Second, the overall sign comes from
$\kappa=-k(\beta)$ (Lemma~\ref{lem:ek-euler}). If the two disks are glued by the reentrance $\sigma_{\beta^{-1}}$
instead of $\sigma_\beta$, the sign of $\mu$ is reversed (Remark~\ref{rem:ek-conventions}(i)).
\end{remark}

\subsection{A closed form}\label{subsec:ek-closed}

In this subsection we write $F_L(b)$ and $F_{\Ahat}(b)$, and hence $r_b$, each as a single coefficient of an
explicit power series in one variable (Proposition~\ref{prop:ek-closed-form}). The definition~\eqref{eq:ek-FQ}
of $F_Q(b)$ involves the $(2b+2)$-th power of a power series. The closed form avoids this power, and we use the closed form
for the computations of $r_b$ in Section~\ref{sec:main}. The closed form follows from a Lagrange--B\"urmann identity
(Lemma~\ref{lem:lagrange}), which we prove first and which is also the starting point of
Section~\ref{sec:twoadic}, where we find the power of $2$ in the denominator of $r_b$.

Recall from Section~\ref{subsec:ek-statement} that, for a power series $Q(U)$ with $Q(0)=1$, the number $F_Q(b)$
is defined by~\eqref{eq:ek-FQ}. There, $[U^j]f$ denotes the coefficient of $U^j$ in a power series $f$, and the
prime is $d/dU$. We write $F_L(b)$ and $F_{\Ahat}(b)$ for $F_Q(b)$ with $Q=Q_L$ and $Q=Q_{\Ahat}$, the
characteristic power series of the $L$-genus and of the $\Ahat$-genus (Section~\ref{subsec:ek-statement}). In the
same way, $[y^{2m}]f$ denotes the coefficient of $y^{2m}$ in a power series $f(y)$. The functions
$\operatorname{artanh}$ and $\operatorname{arsinh}$ are the inverses of $\tanh$ and $\sinh$, and both series in
Proposition~\ref{prop:ek-closed-form} are even power series in $y$ with constant term $1$.

We use the following Lagrange--B\"urmann identity. Let $Q(U)$ be a power series with $Q(0)=1$. Let
$\varphi(x)=x/Q(x^2)$, an odd power series with $\varphi'(0)=1$, let $\psi$ be its compositional inverse, and let
$\omega(x)=1/Q(4x^2)$.

\begin{lemma}\label{lem:lagrange}
For every $j\ge1$,
\[
  F_Q(j-1)=[y^{2j}]\,\omega\bigl(\psi(y)\bigr).
\]
Equivalently, there is an identity of power series in $x$,
\[
  \sum_{j\ge0}F_Q(j-1)\,\varphi(x)^{2j}=\omega(x),
\]
where the term $j=0$ is $1$.
\end{lemma}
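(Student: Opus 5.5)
The plan is to apply the Lagrange inversion formula with a weight function. First I will check that the two statements are equivalent. Since $\varphi$ is odd with $\varphi'(0)=1$, the substitution $y=\varphi(x)$, i.e.\ $x=\psi(y)$, is an invertible change of variables of formal power series. Here $\psi$ is odd, so $\omega(\psi(y))$ is even in $y$. Hence $\omega(x)=\sum_{j}c_j\varphi(x)^{2j}$ holds exactly when $c_j=[y^{2j}]\,\omega(\psi(y))$. At $j=0$ we get $c_0=\omega(0)=1$, which is the constant term claimed.

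To compute $c_j$ for $j\ge1$, I will use the Lagrange--B\"urmann formula in the form
\[
 [y^n]\,H(\psi(y))=\frac1n[x^{n-1}]\,H'(x)\Bigl(\frac{x}{\varphi(x)}\Bigr)^{n},
\]
valid for $n\ge1$ and any power series $H$. Equivalently, I can use the form without a derivative,
\[
 [y^n]H(\psi(y))=[x^n]\,H(x)\Bigl(\frac{x}{\varphi(x)}\Bigr)^{n}\Bigl(1-\frac{x\varphi'(x)}{\varphi(x)}\Bigr)\cdot\Bigl(\text{sign conventions}\Bigr),
\]
and I will check which form matches \eqref{eq:ek-FQ} more directly. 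Take $n=2j$ and $H=\omega$, and note that $x/\varphi(x)=Q(x^2)$. The second form gives
\[
 [x^{2j}]\,\frac{Q(x^2)^{2j}}{Q(4x^2)}\Bigl(1-\frac{x\varphi'(x)}{\varphi(x)}+\dots\Bigr).
\]
More precisely, I will use the standard identity $[y^n]H(\psi)=[x^n]\,H(x)\,\varphi'(x)\,\bigl(x/\varphi(x)\bigr)^{n+1}$.

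Next I compute $\varphi'(x)\,x/\varphi(x)$. With $U=x^2$ we have $\varphi=x/Q(U)$, so $\varphi'=1/Q-2U Q'/Q^2$. Therefore
\[
 \varphi'(x)\frac{x}{\varphi(x)}=1-\frac{2UQ'(U)}{Q(U)},
\]
and
\[
 [y^{2j}]\,\omega(\psi)=[U^j]\,\frac{Q(U)^{2j}}{Q(4U)}\Bigl(1-\frac{2UQ'(U)}{Q(U)}\Bigr)=[U^j]\frac{Q^{2j}}{Q(4U)}-[U^{j-1}]\frac{2Q^{2j-1}Q'}{Q(4U)}.
\]
The last step is to match this with $F_Q(j-1)$. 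In \eqref{eq:ek-FQ}, $F_Q(b)$ has exponent $2b+2=2j$ and second term $\frac{1}{b+1}[U^b](Q^{2j})'/Q(4U)=\frac1j[U^{j-1}]\,2jQ^{2j-1}Q'/Q(4U)$. This agrees with the expression above.

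The main obstacle is choosing and justifying the correct Lagrange--B\"urmann variant. The reduction from $x$ to $U=x^2$ is legitimate because all the series involved are even in $x$. To make the argument self-contained, I will prove the variant via the residue identity. For $n\ge1$, $[y^n]H(\psi(y))=\operatorname{res}_y H(\psi(y))y^{-n-1}$. Substituting $y=\varphi(x)$, which is valid for formal Laurent series because $\varphi'(0)=1$, turns this into $\operatorname{res}_x H(x)\varphi'(x)\varphi(x)^{-n-1}$. That equals $[x^n]H\varphi'(x/\varphi)^{n+1}$, following \cite[Theorem~5.4.2]{StanleyEC2}.
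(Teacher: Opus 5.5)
Your proof is correct. It uses the same tool as the paper, Lagrange--B\"urmann inversion, but a different variant of it.

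The paper first applies $[U^{j-1}]f'=j[U^j]f$ to $f=Q(U)^{2j}/Q(4U)$. This collapses the two terms of \eqref{eq:ek-FQ} into the single term $\frac1j[U^{j-1}]\,Q(U)^{2j}\,\frac{d}{dU}\bigl(1/Q(4U)\bigr)$. The paper then substitutes $U=x^2$ and cites the derivative form $[y^k]f(\psi(y))=\frac1k[x^{k-1}]f'(x)\,Q(x^2)^k$.

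You use the derivative-free form $[y^n]H(\psi(y))=[x^n]H(x)\varphi'(x)\bigl(x/\varphi(x)\bigr)^{n+1}$. Your identity $\varphi'(x)\,x/\varphi(x)=1-2UQ'(U)/Q(U)$ shows that this form reproduces the two terms of \eqref{eq:ek-FQ} one for one, so no integration by parts is needed. In effect, the definition of $F_Q$ is exactly this form of the inversion formula written in $U=x^2$.

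The two variants differ by exactly the step the paper performs, namely that the residue of a derivative vanishes. Your route gives a more transparent match with the definition and, via the residue change of variables, a self-contained proof. The paper's route reduces to citing the most commonly stated version of the formula.

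In a write-up, drop the intermediate ``form without a derivative'' with the factor $1-x\varphi'(x)/\varphi(x)$ and the ``sign conventions'' placeholder. As written it is incorrect: the factor multiplying $H(x)\bigl(x/\varphi(x)\bigr)^n$ should be $x\varphi'(x)/\varphi(x)$. The identity you actually use afterwards is the right one.
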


\begin{proof}
Let $\Lambda(U)=Q(U)^{2j}$ and $\Omega(U)=1/Q(4U)$. By~\eqref{eq:ek-FQ} with $b=j-1$, we have
$F_Q(j-1)=[U^j]\Lambda\Omega-\frac1j[U^{j-1}]\Lambda'\Omega$. For every power series $f$ we have
$[U^{j-1}]f'=j[U^j]f$. Applying this to $f=\Lambda\Omega$, whose derivative is $\Lambda'\Omega+\Lambda\Omega'$, gives
\[
  [U^j]\Lambda\Omega-\frac1j[U^{j-1}]\Lambda'\Omega=\frac1j[U^{j-1}]\Lambda\Omega' .
\]
Hence $F_Q(j-1)=\frac1j[U^{j-1}]\Lambda\Omega'$. Substitute $U=x^2$. Then $\Lambda(x^2)=Q(x^2)^{2j}$,
$\Omega(x^2)=\omega(x)$ and $\omega'(x)=2x\,\Omega'(x^2)$, so
\[
  [U^{j-1}]\Lambda\Omega'=[x^{2j-2}]\,Q(x^2)^{2j}\,\frac{\omega'(x)}{2x}=\frac12[x^{2j-1}]\,\omega'(x)\,Q(x^2)^{2j} .
\]
Hence $F_Q(j-1)=\frac{1}{2j}[x^{2j-1}]\,\omega'(x)\,Q(x^2)^{2j}$. The Lagrange--B\"urmann
formula~\cite[Theorem~2.1.1, p.~214]{Gessel}, \cite[Theorem~5.4.2, p.~38]{StanleyEC2} for the inverse of
$y=x/Q(x^2)$ states that $[y^k]f(\psi(y))=\frac1k[x^{k-1}]f'(x)\,Q(x^2)^k$ for every power series $f$ and every
$k\ge1$. With $k=2j$ and $f=\omega$, this is the first claim. The series $\omega$ is even and $\psi$ is odd, so
$\omega\circ\psi$ is even. Substituting $y=\varphi(x)$ gives the second claim.
\end{proof}

\begin{proposition}\label{prop:ek-closed-form}
For every $b\ge1$,
\[
  F_L(b)=[y^{2m}]\frac{y}{(1+y^2)\operatorname{artanh}y},\qquad
  F_{\Ahat}(b)=[y^{2m}]\frac{y\sqrt{1+y^2/4}}{2\operatorname{arsinh}(y/2)},
\]
and consequently
\begin{equation}\label{eq:ek-rb-closed}
  r_b=\frac{1}{a_m}\,[y^{2m}]\left(\frac{y\sqrt{1+y^2/4}}{2\operatorname{arsinh}(y/2)}
     -t_m\,\frac{y}{(1+y^2)\operatorname{artanh}y}\right).
\end{equation}
\end{proposition}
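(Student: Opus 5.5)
By Lemma~\ref{lem:lagrange} with $j=m=b+1$, we have $F_Q(b)=[y^{2m}]\,\omega(\psi(y))$, where $\psi$ is the compositional inverse of $\varphi(x)=x/Q(x^2)$ and $\omega(x)=1/Q(4x^2)$. So the plan is, for each of $Q_L$ and $Q_{\Ahat}$, to compute $\varphi$ and its inverse in closed form, compose with $\omega$, and simplify using hyperbolic double-angle identities. The formula for $r_b$ then follows at once from the definition $r_b=(F_{\Ahat}(b)-t_mF_L(b))/a_m$.

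\emph{The $L$-genus.} Take $Q_L(z)=\sqrt z/\tanh\sqrt z$. Then $\varphi(x)=x/Q_L(x^2)=\tanh x$, so $\psi(y)=\operatorname{artanh}y$. Next, $\omega(x)=1/Q_L(4x^2)=\tanh(2x)/(2x)$. By the double-angle formula, $\tanh 2x=2\tanh x/(1+\tanh^2x)$. Substituting $x=\operatorname{artanh}y$ gives
\[
  \omega(\psi(y))=\frac{2y/(1+y^2)}{2\operatorname{artanh}y}=\frac{y}{(1+y^2)\operatorname{artanh}y}.
\]

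\emph{The $\Ahat$-genus.} Take $Q_{\Ahat}(z)=\tfrac12\sqrt z/\sinh(\tfrac12\sqrt z)$. Then $\varphi(x)=x/Q_{\Ahat}(x^2)=2\sinh(x/2)$, so $\psi(y)=2\operatorname{arsinh}(y/2)$. Next, $\omega(x)=1/Q_{\Ahat}(4x^2)=\sinh x/x$. Using $\sinh x=2\sinh(x/2)\cosh(x/2)$ and $\cosh(x/2)=\sqrt{1+\sinh^2(x/2)}$, and setting $\sinh(x/2)=y/2$, we obtain
\[
  \omega(\psi(y))=\frac{y\sqrt{1+y^2/4}}{2\operatorname{arsinh}(y/2)}.
\]
Both results are even power series with constant term $1$, as the text asserts.

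The only points needing care are formal ones. First, $\sqrt z/\tanh\sqrt z$ and the analogous expression for $\Ahat$ are genuine power series in $z$, so $x/Q(x^2)$ is an honest odd series with $\varphi'(0)=1$, and its compositional inverse exists; here it is given by the convergent analytic functions $\operatorname{artanh}$ and $2\operatorname{arsinh}(\cdot/2)$. Second, the branch of the square root must be chosen so that $\cosh(x/2)$ has constant term $1$. Identities between convergent analytic functions near $0$ give identities of Taylor series, so taking $[y^{2m}]$ is legitimate. Finally, $-t_m=1/\bigl(2^{2m+1}(2^{2m-1}-1)\bigr)$, which turns \eqref{eq:ek-rb-closed} into the formula of Theorem~\ref{thm:main.thm.C}. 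I expect no real obstacle; the main risk is a slip in the normalisation of the $\Ahat$ series, namely the factors of $\tfrac12$, which I would check against $r_2=1/5952$.
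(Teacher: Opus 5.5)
Your proposal is correct and follows the paper's proof essentially step for step. You apply Lemma~\ref{lem:lagrange} with $j=m$, identify $\varphi=\tanh$ and $\varphi(x)=2\sinh(x/2)$ together with their explicit inverses, and simplify $\omega\circ\psi$ using the double-angle identities $\tanh 2x=2\tanh x/(1+\tanh^2x)$ and $\sinh 2c=2\sinh c\cosh c$, after which the formula for $r_b$ follows directly from its definition.
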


\begin{proof}
By Lemma~\ref{lem:lagrange} with $j=m=b+1$, $F_Q(b)=[y^{2m}]\,\omega(\psi(y))=[y^{2m}]\,1/Q\bigl(4\psi(y)^2\bigr)$,
where $\psi$ is the compositional inverse of $\varphi(x)=x/Q(x^2)$.

For the $L$-genus, $Q_L(x^2)=x/\tanh x$, so $\varphi=\tanh$ and $\psi=\operatorname{artanh}$, and
$1/Q_L(4x^2)=\tanh(2x)/(2x)$. With $x=\operatorname{artanh}y$, the double-angle identity
$\tanh 2x=2\tanh x/(1+\tanh^2x)$ gives $\tanh 2x=2y/(1+y^2)$. Hence
\[
  \frac{1}{Q_L\bigl(4\psi(y)^2\bigr)}=\frac{y}{(1+y^2)\operatorname{artanh}y}.
\]

For the $\Ahat$-genus, $Q_{\Ahat}(x^2)=(x/2)/\sinh(x/2)$, so $\varphi(x)=2\sinh(x/2)$ and
$\psi(y)=2\operatorname{arsinh}(y/2)$, and $1/Q_{\Ahat}(4x^2)=\sinh x/x$. With $x=\psi(y)=2c$, where
$\sinh c=y/2$, the double-angle identity $\sinh 2c=2\sinh c\cosh c$ gives $\sinh x=y\sqrt{1+y^2/4}$. Hence
\[
  \frac{1}{Q_{\Ahat}\bigl(4\psi(y)^2\bigr)}=\frac{y\sqrt{1+y^2/4}}{2\operatorname{arsinh}(y/2)}.
\]
The formula for $r_b$ follows from its definition in Section~\ref{subsec:ek-statement}.
\end{proof}

The first values are listed in Table~\ref{tab:ek-rb}. The sign of $r_b$ is $(-1)^b$ in
this range.

\begin{table}[ht]
\centering\footnotesize
\renewcommand{\arraystretch}{1.15}
\begin{tabular}{cccc}
$b$ & $n$ & $r_b$ & denominator of $r_b$\\\hline
2 & 11 & $1/5952$ & $2^6\cdot3\cdot31$\\
3 & 15 & $-1/12192$ & $2^5\cdot3\cdot127$\\
4 & 19 & $449/54942720$ & $2^{10}\cdot3\cdot5\cdot7^2\cdot73$\\
5 & 23 & $-95033/29712614400$ & $2^{10}\cdot3^4\cdot5^2\cdot7\cdot23\cdot89$\\
6 & 27 & $6649999/20925344563200$ & $2^{14}\cdot3^4\cdot5^2\cdot7\cdot11\cdot8191$\\
7 & 31 & $-3695751877/28565710677504000$ & $2^{12}\cdot3^5\cdot5^3\cdot7^3\cdot11\cdot13\cdot31\cdot151$\\
8 & 35 & $80575914071/5983354894712832000$ & $2^{18}\cdot3^7\cdot5^3\cdot7^2\cdot13\cdot131071$\\
9 & 39 & $-6090643156337/1065613112080957440000$ & $2^{18}\cdot3^6\cdot5^4\cdot7\cdot11\cdot13\cdot17\cdot524287$\\
\end{tabular}
\caption{The coefficient $r_b$ of Theorem~\ref{thm:ek}, with $m=b+1$. In every row the denominator of
$r_b$ is divisible by $2^{2m-1}-1$, the odd factor of the denominator of $t_m$; for example $31$ for
$b=2$, and $511=7\cdot73$ for $b=4$.}\label{tab:ek-rb}
\end{table}

\begin{remark}[Checks]\label{rem:ek-checks}
The computations of this section have been checked by computer. The programs and their outputs are archived at Zenodo~\cite{GGINFcode}, where a README describes each program, the statement it supports and its controls. In exact rational arithmetic, for $2\le b\le12$, they check the series $Q_L$ and
$Q_{\Ahat}$ against Bernoulli numbers; $t_m$ and Lemma~\ref{lem:ek-normalisation}; the
consistency of the ring of Proposition~\ref{prop:ek-ring} with the Thom-class identities in the proof of
Proposition~\ref{prop:ek-ring}(iii) and the
restrictions to $B$ and $F$ in Propositions~\ref{prop:ek-ring} and~\ref{prop:ek-pont}; and the
signature. They compute $r_b$ in three ways, from the polynomial
$N_m$ evaluated in the ring, from Proposition~\ref{prop:ek-evaluation} and from
Proposition~\ref{prop:ek-closed-form}, and check the evaluation of
Sections~\ref{subsec:ek-ring}--\ref{subsec:ek-evaluation} by a fixed-point computation on the zero section $B$ for the circle
$S^1=\{e^{i\theta}\}\subset G$ acting on $Z$; Section~\ref{sec:main} extends this last check to
$b\le64$. The circle acts on $D^4_R\times D(\gamma)$ by left multiplication on the fibres of $\gamma$, which
commutes with $\tilde f_a$ since $\tilde f_a$ is left $\HH$-linear on each fibre, and on $\tilde D^4\times\HP^b$ by
$[w:t]\mapsto[w:tz^{-1}]$, $z\in S^1$. Since $\iota(zhw)=[w:\bar h z^{-1}]$, the two actions agree under the
identification that defines $Z$. The fixed set is $B\cup(\tilde D^4\times\{[0:1]\})$. Under the conventions of~\cite{DHZ} ($\langle xy\rangle=1$, $e=x+ky$) the computation gives
$\mu=k/5952$ and the intermediate numbers $\langle\bar p_1p_1^2\rangle=-16k$,
$\langle\bar p_1p_2\rangle=-4k$ of~\cite[(3.10)--(3.12)]{DHZ}; under those of~\cite{CDZ} it gives
\cite[(5.5), (5.7)]{CDZ} and $\mu=15m'/1016$ at $k=180m'$~\cite[Theorem~1.2]{CDZ}. The same
code gives $L=0$ and $\Ahat\in a_m\ZZ$ on the closed spin manifolds $\HP(\eta_k\oplus\HH^b)$,
where $\eta_k\to S^4$ is the quaternionic line bundle with second Chern number $k$, and it
fails on them when their ring relation is perturbed. A second program checks numerically the signs in
Lemmas~\ref{lem:ek-twisted}, \ref{lem:ek-orientation} and~\ref{lem:ek-euler}, including the value
$\chi(S^4)=2$ obtained by the computation in the proof of Lemma~\ref{lem:ek-euler}.
\end{remark}

\endgroup
\section{Realisable degrees}\label{sec:degrees}

The Eells--Kuiper formula of Section~\ref{sec:ek} expresses $\mu(\Sigma_{\rho_b,\beta})$ as a rational multiple of the
slice degree $k(\beta)$. To produce exotic spheres we therefore need to know which degrees are realised by smooth
conjugation-equivariant maps. In this section, we show that an explicit integer $k_0(b)$, the product of the exponents
of the groups $\pi_{4j+2}(S^3)$ for $1\le j\le b-1$, is always realised. These groups are finite by the work of
Serre~\cite[Chapter~V, Proposition~3, p.~494]{Serre}. Recall that the exponent of a finite abelian group is the least
common multiple of the orders of its elements, that is, the least positive integer $e$ with $ex=0$ for every element
$x$. The argument was carried out for $b=3$ by Chen, Deng and Zhang~\cite[\S3.1 and Lemma~3.1, pp.~6--7]{CDZ}. We
observe that it holds for every $b$. We also state the arithmetic properties of $k_0(b)$ we use in the proof of
Theorem~\ref{thm:main.thm.A}: its prime factors are small, and its $2$-adic valuation is at most $2(b-1)$, and at
most $2b-5$ for $b\ge5$.

\subsection{The tower of automorphism groups}
In this subsection we show that the slice degrees of smooth conjugation-equivariant maps for $\rho_b$ contain the
image of a homomorphism $\operatorname{ev}_*\colon\pi_3(\mathcal G_{b-1})\to\ZZ$ (Lemma~\ref{lem:realisation}). We
then bound this image from below by climbing a tower of groups $\mathcal G_{b-1}\to\cdots\to\mathcal G_0=S^3$, whose
successive fibres are the loop spaces $\Omega^{4j}S^3$ (Lemma~\ref{lem:restriction-fibration} and
Proposition~\ref{prop:degree-tower}).

Recall from Section~\ref{sec:prelim} that $S(V_b) = S^2 * S^{4b-1}$, and that $G = S^3$ acts freely on $S^{4b-1} = S(\HH^b)$
by left multiplication, with quotient $\HP^{b-1}$. For $0\le j\le b-1$ let
\[
 \mathcal G_j = \operatorname{Map}_G\bigl(S^{4j+3}, G_{\mathrm{conj}}\bigr)
\]
be the group, under pointwise multiplication, of maps $c\colon S^{4j+3}\to G$ with
$c(qw) = q\,c(w)\,q^{-1}$. In this section, all mapping spaces consist of continuous maps and carry the compact-open topology, and their homotopy groups are based at the constant map $1$. Smooth maps are obtained afterwards, in Lemma~\ref{lem:realisation}. Via the map $c\mapsto\bigl(w\mapsto c(w)^{-1}w\bigr)$, the group $\mathcal G_j$ is isomorphic to the group of automorphisms of the quaternionic Hopf bundle
$S^{4j+3}\to\HP^{j}$ that cover the identity. The map $c\mapsto\bigl(w\mapsto c(w)w\bigr)$ reverses the order of multiplication, as in Lemma~\ref{lem:reentrance}(1). For $j=0$ the base is a point, and an element of $\mathcal G_0$ is determined by its
value at $1$, so
\[
 \mathcal G_0 \cong S^3 .
\]
We regard $S^{4j-1}\subset S^{4j+3}$ as the unit vectors whose last quaternionic coordinate vanishes. These
inclusions are $G$-invariant, since left multiplication by $q\in G$ preserves the vanishing of the last coordinate.
Hence restriction defines homomorphisms
\[
 \operatorname{res}_j\colon \mathcal G_j \longrightarrow \mathcal G_{j-1}, \qquad \operatorname{res}_j(c)=c|_{S^{4j-1}},
 \qquad 1\le j\le b-1 .
\]
The restriction of an equivariant map to an invariant subset is equivariant, and restriction commutes with pointwise
multiplication.
The composite of the $\operatorname{res}_j$, $\operatorname{ev}\colon\mathcal G_{b-1}\to\mathcal G_0 = S^3$, is
evaluation at $w_0 = (1,0,\dots,0)$.

Recall that $E(\rho_b)$ is the group of equivariant homotopy classes of equivariant maps $S(V_b)\to G_{\mathrm{conj}}$ (Section~\ref{subsec:star}). Let $K_b = k\bigl(E(\rho_b)\bigr)\subset\ZZ$ be the set of slice degrees of smooth conjugation-equivariant maps for $\rho_b$, which is a
subgroup by Lemma~\ref{lem:ek-slice-degree}(iii). The next lemma shows that $K_b$ contains the image of
$\operatorname{ev}_*\colon\pi_3(\mathcal G_{b-1})\to\pi_3(S^3)=\ZZ$.

\begin{lemma}\label{lem:realisation}
Let $c\colon(D^3,\partial D^3)\to(\mathcal G_{b-1},1)$ be continuous. Then there is a smooth conjugation-equivariant map $\beta$ for $\rho_b$ with $k(\beta) = \deg\bigl(p\mapsto c(p)(w_0)\bigr) = \operatorname{ev}_*[c]$.
\end{lemma}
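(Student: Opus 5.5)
The plan is to build $\beta$ directly from the family $c$ by the inverse of the normal-form construction of Section~\ref{subsec:ek-normal}, to check that it is well defined, continuous and equivariant, and then to smooth it equivariantly without changing the slice degree.

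First I reparametrise $c$ so that it is the constant map $1$ near $\partial D^3$. Pick a homeomorphism of $D^3$ onto the disk of radius $3/5$, and extend $c$ by $1$ outside it. Then define
\[
 \beta_0(p,w)=c\bigl(\lambda(p)\bigr)\bigl(w/|w|\bigr)\quad\text{for }w\neq0,\qquad \beta_0(p,0)=1,
\]
where $\lambda(p)$ is the rescaled point of $D^3$ when $|p|\le 3/5$, and $\beta_0=1$ otherwise. Near $S(\RR^3)$, that is for $|w|$ small, we have $|p|$ close to $1$, so $\beta_0=1$ there. Hence $\beta_0$ is continuous on $S(V_b)$, including at the fixed sphere.

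Equivariance comes directly from the defining property of $\mathcal G_{b-1}$. Since $q(w/|w|)=(qw)/|qw|$ and each $c(\lambda(p))$ is conjugation-equivariant on $S^{4b-1}$, we get $\beta_0(p,qw)=q\,\beta_0(p,w)\,q^{-1}$. Continuity in $(p,w)$ jointly follows from the compact-open topology, because $S^{4b-1}$ is compact and evaluation is continuous.

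Next I smooth. The map $\beta_0$ is already smooth, indeed constant, on a neighbourhood of the closed invariant set $U_{4/5}\cup\{|p|\ge 3/5\}$. By the equivariant smooth approximation theorem relative to a closed invariant set \cite[Chapter~VI, Theorem~4.2]{Bredon}, as used in Lemma~\ref{lem:reentrance}(2), $\beta_0$ is equivariantly homotopic to a smooth equivariant $\beta$ that still equals $1$ near $S(\RR^3)$. By Lemma~\ref{lem:ek-slice-degree}(ii), the slice degree is invariant under equivariant homotopy, so $k(\beta)=k(\beta_0)$.

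It remains to compute $k(\beta_0)$. Using $s_{w_0}(p)=(p,(1-|p|^2)^{1/2}w_0)$, we find $\beta_0\circ s_{w_0}(p)=c(\lambda(p))(w_0)$. This is the reparametrised map $p\mapsto c(p)(w_0)$, which sends $\partial D^3$ to $1=\varepsilon_{\beta_0}$. The reparametrisation is an orientation-preserving homeomorphism onto the smaller disk, extended by a constant, so it has degree one as a map of pairs. Therefore $k(\beta)=\deg\bigl(p\mapsto c(p)(w_0)\bigr)$. Finally, $\operatorname{ev}_*[c]$ is by definition the class in $\pi_3(S^3)=\ZZ$ of $p\mapsto\operatorname{ev}(c(p))=c(p)(w_0)$ on $D^3/\partial D^3$, so the two numbers agree.

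The main obstacle I expect is the smoothing step. One must make sure that the relative approximation theorem applies to maps into $G_{\mathrm{conj}}$ with the conjugation action, and that it keeps the value $1$ on a neighbourhood of the fixed set. I would first try to handle this exactly as in Lemma~\ref{lem:reentrance}(2), with the closed invariant set taken to be $U_{4/5}$. If some care about joint continuity is needed, I would instead smooth $c$ first, since continuous maps $D^3\to\mathcal G_{b-1}$ correspond to equivariant maps $D^3\times S^{4b-1}\to G$.
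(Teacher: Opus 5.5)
Your proposal is correct and follows essentially the same route as the paper: make $c$ equal to $1$ near $\partial D^3$ (the paper uses a homotopy rel $\partial D^3$ so that $c(p)=1$ for $|p|\ge\frac13$), define $\beta$ in slice coordinates by $c(p)(w)$, smooth it by the relative equivariant approximation theorem, and read off the slice degree from $\beta\circ s_{w_0}(p)=c(p)(w_0)$. One small slip is that your $\beta_0$ equals $1$ on $\{|p|\ge 3/5\}=U_{4/5}$ but not necessarily on a neighbourhood of it; so either apply the relative approximation to a smaller closed invariant set such as $\{|p|\ge 4/5\}$ (the paper builds in this margin by taking $A=\{|p|\ge\frac12\}$), or use the non-relative version, since Lemma~\ref{lem:ek-slice-degree}(ii) does not need the smoothed map to stay equal to $1$ near $S(\RR^3)$.
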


\begin{proof}
The construction is that of~\cite[\S3.3, (3.4)--(3.5)]{DHZ}, where $b=2$. Points of $S(V_b)$ with $|p|<1$ are uniquely of the form $(p,(1-|p|^2)^{1/2}w)$ with $p\in\operatorname{int}D^3$
and $w\in S^{4b-1}$. Define $\beta_c\colon S(V_b)\to G$ by
\[
 \beta_c\bigl(p,(1-|p|^2)^{1/2}w\bigr) = c(p)(w)\quad(|p|<1),\qquad \beta_c = 1 \text{ on } S(\RR^3).
\]
After a homotopy of $c$ rel $\partial D^3$ we may assume that $c(p) = 1$ for $|p|\ge\frac13$. Then $\beta_c=1$
on a neighbourhood of $S(\RR^3)$, so $\beta_c$ is continuous. The map $\beta_c$ is conjugation-equivariant for $\rho_b$, that is,
$\beta_c(\rho_b(q)y)=q\,\beta_c(y)\,q^{-1}$ for $q\in G$ and $y\in S(V_b)$. On $S(\RR^3)$ both sides equal $1$. For
$y=(p,(1-|p|^2)^{1/2}w)$ with $|p|<1$, we have $\rho_b(q)y=(p,(1-|p|^2)^{1/2}qw)$, so
$\beta_c(\rho_b(q)y)=c(p)(qw)=q\,c(p)(w)\,q^{-1}$. By equivariant smooth approximation~\cite[Chapter~VI, Theorem~4.2, p.~317]{Bredon}, applied to the closed invariant set $A=\{(p,v)\in S(V_b):|p|\ge\frac12\}$, on a neighbourhood of which
$\beta_c=1$, the map $\beta_c$ is equivariantly homotopic to a smooth equivariant map $\beta$ that equals $1$ on $A$. The proof of
Lemma~\ref{lem:ek-slice-degree}(ii) applies without change to this homotopy, so $k(\beta) = \deg(\beta_c\circ s_{w_0})$, and the slice
$s_{w_0}(p) = (p,(1-|p|^2)^{1/2}w_0)$ gives $\beta_c\circ s_{w_0}(p) = c(p)(w_0)$. Hence
$k(\beta) = \deg(p\mapsto c(p)(w_0))$, the image of $[c]$ under
$\operatorname{ev}_*\colon\pi_3(\mathcal G_{b-1})\to\pi_3(S^3)$.
\end{proof}

\begin{lemma}\label{lem:restriction-fibration}
For $1\le j\le b-1$, the restriction $\operatorname{res}_j\colon\mathcal G_j\to\mathcal G_{j-1}$, $c\mapsto c|_{S^{4j-1}}$, is a Serre
fibration whose fibre over the identity is homeomorphic to the pointed mapping space
$\operatorname{Map}_*(S^{4j},S^3) = \Omega^{4j}S^3$.
\end{lemma}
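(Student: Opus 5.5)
The plan is to identify $\mathcal G_j$ with the space of sections of a bundle over $\HP^j$, then apply relative cell extension to the pair $(\HP^j,\HP^{j-1})$. An equivariant map $c\colon S^{4j+3}\to G_{\mathrm{conj}}$ is the same as a section of the associated bundle $\mathrm{Ad}_j=S^{4j+3}\times_G G_{\mathrm{conj}}\to\HP^j$, where $G$ acts on $S^{4j+3}\times G$ by $q\cdot(w,g)=(qw,qgq^{-1})$. This gives a homeomorphism $\mathcal G_j\cong\Gamma(\HP^j;\mathrm{Ad}_j)$ in the compact-open topology. Under it, $\operatorname{res}_j$ is restriction of sections to $\HP^{j-1}\subset\HP^j$, since $\mathrm{Ad}_j|_{\HP^{j-1}}=\mathrm{Ad}_{j-1}$. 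The bundle $\mathrm{Ad}_j$ has fibre $S^3$ and a canonical section $1$, which corresponds to the identity of $\mathcal G_j$.

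Restricting sections along a cofibration is a fibration. The inclusion $\HP^{j-1}\hookrightarrow\HP^j$ is a relative CW inclusion, hence a closed cofibration. For a fibre bundle $E\to X$ and a cofibration $A\subset X$ with $X$ compact, the map $\Gamma(X;E)\to\Gamma(A;E|_A)$ has the homotopy lifting property for all spaces. To see this, lift a homotopy of sections on $A$ together with an initial section on $X$. The cofibration gives a retraction $X\times I\to X\times0\cup A\times I$. One then uses the homotopy lifting property of $E\times I\to X\times I$, or equivalently the contractibility of the fibre-preserving extension problem over the pair $(X\times I,X\times0\cup A\times I)$. A cleaner route is to work with a single cell: $\HP^j=\HP^{j-1}\cup_\phi D^{4j}$, and by Lemma~\ref{lem:ek-HPb} the top cell is the closed ball $\{[z:1]:|z|\le1\}$. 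Over the contractible cell $D^{4j}$ the bundle trivialises, so the extension problem becomes extending a map $S^{4j-1}\times I\cup D^{4j}\times0\to S^3$ over $D^{4j}\times I$. That is always possible because $(D^{4j}\times I,\,S^{4j-1}\times I\cup D^{4j}\times0)$ is a retract pair. Doing this parametrised over cubes $I^n$ gives the Serre property. This is the step requiring the most care: the trivialisation of $\mathrm{Ad}_j$ over the cell must be chosen compatibly near the attaching sphere, and the pushout topology on the section space must be handled correctly.

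The fibre is read off from the same trivialisation. The fibre over $1\in\mathcal G_{j-1}$ consists of sections equal to $1$ on $\HP^{j-1}$. Such a section is determined by its values on the top cell, where in the trivialisation it is a map $D^{4j}\to S^3$ sending $\partial D^{4j}$ to $1$. Concretely, the affine chart $z\mapsto[z:1]$ has the section $z\mapsto(z,1)/|(z,1)|$ of the Hopf bundle, and $c\mapsto\bigl(z\mapsto c((z,1)/|(z,1)|)\bigr)$ is a homeomorphism of the fibre onto $\{u\colon D^{4j}\to S^3:u|_{\partial D^{4j}}=1\}$. Outside the ball, the value of $c$ on $\{[w:t]:|t|\le|w|\}$ is forced to be $1$. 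This holds because the complement deformation retracts fibrewise onto $\HP^{j-1}$ via $\iota$, and $c$ is determined by its values on orbits. One must check that $c=1$ on all of $\iota(D(\gamma))$, not just on $\HP^{j-1}$. This is not automatic, which suggests defining the fibre as the sections that are $1$ on $\HP^{j-1}$ and then using the equivalent model with the cell attached along a collar. If strict homeomorphism fails, I would instead collapse the collar and replace $\HP^{j-1}$ by its closed disk-bundle neighbourhood $\iota(D(\gamma))$, which changes $\mathcal G_{j-1}$ only up to homotopy equivalence. Finally, $D^{4j}/\partial D^{4j}=S^{4j}$ identifies the fibre with $\operatorname{Map}_*(S^{4j},S^3)=\Omega^{4j}S^3$.
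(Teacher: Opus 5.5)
Your identification of $\mathcal G_j$ with sections of $S^{4j+3}\times_G G_{\mathrm{conj}}\to\HP^j$ is fine. So is your fibration argument, which restricts sections along the closed cofibration $\HP^{j-1}\subset\HP^j$; it carries the same content as the paper's appeal to Gottlieb's bundle-map theory. Your description of the fibre as the sections equal to $1$ over $\HP^{j-1}$ is also correct.

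The gap is the cell you use. The closed ball $\{[z:1]:|z|\le1\}$ of Lemma~\ref{lem:ek-HPb} is not the $4j$-cell of the pair $(\HP^j,\HP^{j-1})$. Its boundary $\{[v:1]:|v|=1\}$ is the sphere bundle of $\iota(D(\gamma))$, which is disjoint from $\HP^{j-1}$. Consequently, your map $c\mapsto\bigl(z\mapsto c((z,1)/|(z,1)|)\bigr)$, restricted to $|z|\le1$, does not land in $\{u: u|_{\partial D^{4j}}=1\}$. Your claim that $c$ is forced to be $1$ on $\{[w:t]:|t|\le|w|\}$ is false. Take any based map $f\colon S^{4j}\to S^3$ that is nontrivial only for $2<|z|<3$; then $c(q\sigma(z))=qf(z)q^{-1}$ lies in the fibre but is not $1$ there. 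Your fallback, thickening $\HP^{j-1}$ to $\iota(D(\gamma))$, changes the map. At best it yields a (weak) homotopy equivalence of fibres, and even that is not proved. That would suffice for Proposition~\ref{prop:degree-tower}, but it is not the homeomorphism the lemma asserts. The same misidentified cell undermines your ``cleaner route'' to the Serre property: the homotopy of sections is prescribed on $\HP^{j-1}$, not on the boundary of that ball.

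The fix is to use the whole open cell $\HP^j\setminus\HP^{j-1}=\{[z:1]:z\in\HH^j\}$, compactified to $\HH^j\cup\{\infty\}=S^{4j}$. Equivalently, use the characteristic map $z\mapsto[z:(1-|z|^2)^{1/2}]$ on $D^{4j}$, which sends $\partial D^{4j}$ onto $\HP^{j-1}$. This is what the paper does:
\begin{itemize}
\item Set $f_c(z)=c(\sigma(z))$ for all $z\in\HH^j$ and $f_c(\infty)=1$. This is continuous because $\sigma(z)$ approaches $S^{4j-1}$ as $|z|\to\infty$, and $c=1$ there.
\item The inverse is $c(q\sigma(z))=qf(z)q^{-1}$. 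It is continuous at $S^{4j-1}$ since $|qf(z)q^{-1}-1|=|f(z)-1|$.
\item The uniform distance is invariant under conjugation, so both maps are homeomorphisms in the compact-open topology.
\end{itemize}
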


\begin{proof}
Let $G$ act on the right of $S^{4j+3}$ by $w\cdot q=q^{-1}w$. By the discussion above, $c\mapsto\varphi_c$,
$\varphi_c(w)=c(w)^{-1}w$, maps $\mathcal G_j$ isomorphically onto the group of principal bundle maps of the Hopf
bundle $S^{4j+3}\to\HP^j$ that cover the identity. Its inverse sends $\varphi$ to the map $c$ with
$c(w)^{-1}=\sum_k\varphi(w)_k\,\bar w_k$, since $\varphi(w)=c(w)^{-1}w$ and $\sum_kw_k\bar w_k=1$. Both maps are
continuous, so this is an isomorphism of topological groups. Under this identification, $\operatorname{res}_j$ is
restriction to the part $S^{4j-1}$ of the bundle over $\HP^{j-1}$. Since $\HP^{j-1}$ is a subcomplex of $\HP^j$,
restriction is a Serre fibration~\cite[Proposition~(3.4), p.~27, and diagram~(5.4), p.~31]{Gottlieb}; see
also~\cite[\S3.1]{CDZ}.

By the definition of $\operatorname{res}_j$, its fibre over $1$ is $\{c\in\mathcal G_j : c=1\text{ on }S^{4j-1}\}$.
The pair $(\HP^{j},\HP^{j-1})$ is a relative CW complex with a single $4j$-cell, whose interior is
$\{[z:1] : z\in\HH^j\}$. Over it, $\sigma(z)=(z,1)/|(z,1)|$ is a section of the Hopf bundle. For $c$ in the fibre,
let $f_c(z)=c(\sigma(z))$ and $f_c(\infty)=1$. As $|z|\to\infty$, the point $\sigma(z)$ tends to $S^{4j-1}$, where $c=1$, so
$f_c\colon\HH^j\cup\{\infty\}=S^{4j}\to G$ is continuous and based. Conversely, every point off $S^{4j-1}$ is
$q\sigma(z)$ for exactly one pair $(q,z)$. So a based map $f$ determines $c$ by $c(q\sigma(z))=q\,f(z)\,q^{-1}$ and
$c=1$ on $S^{4j-1}$. The map $c$ is continuous. If $q\sigma(z)$ tends to a point of $S^{4j-1}$, then
$|z|\to\infty$, so $f(z)\to1$ and $|q\,f(z)\,q^{-1}-1|=|f(z)-1|\to0$. Since $|qxq^{-1}-qyq^{-1}|=|x-y|$, these
inverse bijections preserve the uniform distance, which induces the compact-open topology on maps from a compact
space to $G$. Hence they are homeomorphisms between the fibre and $\operatorname{Map}_*(S^{4j},S^3)=\Omega^{4j}S^3$.
\end{proof}

\begin{proposition}\label{prop:degree-tower}
For $1\le j\le b-1$, let $e_j$ denote the exponent of the finite abelian group $\pi_{4j+2}(S^3)$. Then
\[
 k_0(b) := \prod_{j=1}^{b-1} e_j \;\in\; K_b .
\]
\end{proposition}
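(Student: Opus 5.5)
By Lemma~\ref{lem:realisation}, $K_b$ contains the image of $\operatorname{ev}_*\colon\pi_3(\mathcal G_{b-1})\to\pi_3(S^3)=\ZZ$. So it suffices to show that this image contains $k_0(b)=\prod_{j=1}^{b-1}e_j$. Since $\operatorname{ev}$ is the composite $\operatorname{res}_1\circ\cdots\circ\operatorname{res}_{b-1}$, I will prove by induction on $j$, for $0\le j\le b-1$, that the image of $\pi_3(\mathcal G_j)\to\pi_3(\mathcal G_0)=\pi_3(S^3)\cong\ZZ$ contains $\prod_{i=1}^{j}e_i\cdot\ZZ$. The case $j=0$ is trivial, since the map is the identity and the empty product is $1$.

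For the inductive step, use Lemma~\ref{lem:restriction-fibration}: $\operatorname{res}_j$ is a Serre fibration with fibre $\Omega^{4j}S^3$ over the identity. Its long exact sequence contains
\[
 \pi_3(\mathcal G_j)\xrightarrow{(\operatorname{res}_j)_*}\pi_3(\mathcal G_{j-1})\xrightarrow{\partial}\pi_2(\Omega^{4j}S^3)\cong\pi_{4j+2}(S^3).
\]
Here $\mathcal G_{j-1}$ is a topological group, hence the sequence is one of groups at these terms, since $\pi_3$ and $\pi_2$ are abelian and $\partial$ is a homomorphism. Let $x\in\pi_3(\mathcal G_{j-1})$. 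Then $\partial(e_jx)=e_j\,\partial x=0$, because $e_j$ annihilates $\pi_{4j+2}(S^3)$. By exactness, $e_jx$ lies in the image of $(\operatorname{res}_j)_*$. Hence the image of $\pi_3(\mathcal G_j)$ in $\pi_3(\mathcal G_0)$ contains $e_j$ times the image of $\pi_3(\mathcal G_{j-1})$, and the latter contains $\prod_{i<j}e_i\cdot\ZZ$ by induction. Taking $j=b-1$, the image of $\operatorname{ev}_*$ contains $k_0(b)\ZZ$, and in particular $k_0(b)\in K_b$.

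Two points need care. First, the homotopy groups are based at the constant map $1$, and that is the basepoint of the fibre over $1$, so the identification of the fibre with $\Omega^{4j}S^3$ is a pointed homeomorphism. This gives $\pi_2$ of the fibre as $\pi_2(\Omega^{4j}S^3)=\pi_{4j+2}(S^3)$, which is finite by Serre. Second, an element of $\pi_3(\mathcal G_{b-1})$ is represented by a map $(D^3,\partial D^3)\to(\mathcal G_{b-1},1)$, which is the input that Lemma~\ref{lem:realisation} requires. Finally, $\pi_3(\mathcal G_0)=\pi_3(S^3)$ is identified with $\ZZ$ via the degree, compatibly with Lemma~\ref{lem:realisation}.

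I expect the main obstacle to be the exactness of the long exact sequence at $\pi_3(\mathcal G_{j-1})$ for a Serre fibration whose base is not connected: the map $\operatorname{res}_j$ need not be surjective on $\pi_0$. This is harmless, because exactness at $\pi_n$ of the base for $n\ge1$ only uses the component of the basepoint and the fibre over it. The rest is routine.
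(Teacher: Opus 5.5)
Your proposal is correct and follows the paper's proof essentially step for step: the same exact segment $\pi_3(\mathcal G_j)\to\pi_3(\mathcal G_{j-1})\to\pi_{4j+2}(S^3)$ of the Serre fibration $\operatorname{res}_j$, the same observation that $e_j$ annihilates the image of $\partial$, giving $e_j\,K_{(j-1)}\subset K_{(j)}$, and then induction up the tower combined with Lemma~\ref{lem:realisation}. One small correction to your justification: $\partial$ is a homomorphism of abelian groups because the homotopy groups involved have degree at least two (as the paper notes), not because $\mathcal G_{j-1}$ is a topological group.
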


\begin{proof}
We follow~\cite[proof of Lemma~3.1]{CDZ}, where $b=3$. For $0\le j\le b-1$, let $r_j\colon\mathcal G_j\to\mathcal G_0$ be the composite of the restrictions, with
$r_0=\mathrm{id}$, so that $r_j=r_{j-1}\circ\operatorname{res}_j$ for $j\ge1$, and let
$K_{(j)}=\operatorname{im}(r_j)_*\subset\pi_3(\mathcal G_0)=\ZZ$. Thus $K_{(0)} = \ZZ$ and
$K_{(b-1)} = \operatorname{im}\operatorname{ev}_*$.

Fix $1\le j\le b-1$. By Lemma~\ref{lem:restriction-fibration}, the long exact homotopy sequence of the Serre
fibration $\operatorname{res}_j$ contains the exact segment
\[
 \pi_3(\mathcal G_j)\xrightarrow{\ (\operatorname{res}_j)_*\ }\pi_3(\mathcal G_{j-1})
 \xrightarrow{\ \partial\ }\pi_2(\Omega^{4j}S^3) = \pi_{4j+2}(S^3).
\]
All groups here are homotopy groups in degree at least two, so $\partial$ is a homomorphism of abelian
groups. Let $x\in\pi_3(\mathcal G_{j-1})$. Then $\partial(e_jx) = e_j\,\partial(x) = 0$, since $\pi_{4j+2}(S^3)$ has
exponent $e_j$. So, by exactness, $e_jx=(\operatorname{res}_j)_*(y)$ for some $y\in\pi_3(\mathcal G_j)$. If
$k=(r_{j-1})_*(x)\in K_{(j-1)}$, then $(r_j)_*(y)=(r_{j-1})_*(e_jx)=e_jk$, since $(r_{j-1})_*$ is a homomorphism.
Hence
\[
 e_j\,K_{(j-1)} \subset K_{(j)} .
\]
Now $1\in K_{(0)}$, and $e_1\cdots e_{j-1}\in K_{(j-1)}$ gives $e_1\cdots e_j\in K_{(j)}$. Hence, by induction,
$k_0(b)=e_1\cdots e_{b-1}\in K_{(b-1)}=\operatorname{im}\operatorname{ev}_*$, which is contained in $K_b$ by
Lemma~\ref{lem:realisation}.
\end{proof}

\subsection{Arithmetic of $k_0(b)$}\label{subsec:k0-arithmetic}

In this subsection we bound the prime factors of $k_0(b)$ and the power of $2$ dividing it
(Corollary~\ref{cor:k0-arithmetic}). These are the two properties of $k_0(b)$ that the proof of
Theorem~\ref{thm:main.thm.A} uses. The bounds come from what is known about the torsion in $\pi_*(S^3)$
(Lemma~\ref{lem:exponents}).

Let $p$ be a prime. The \emph{$p$-primary component} $A_{(p)}$ of a finite abelian group $A$ is the subgroup of
elements whose order is a power of $p$, and the \emph{$p$-primary part} of a positive integer $e$ is the largest
power of $p$ dividing $e$. The exponent of $A$ is the product of the exponents of the groups $A_{(p)}$, so the
$p$-primary part of $e_j$ is the exponent of $\pi_{4j+2}(S^3)_{(p)}$.

\begin{lemma}\label{lem:exponents}
Let $j\ge1$.
\begin{enumerate}
\item If an odd prime $p$ divides $e_j$, then $p\le 2j+1$, and $p^2\nmid e_j$.
\item The $2$-primary part of $e_j$ divides $4$.
\end{enumerate}
\end{lemma}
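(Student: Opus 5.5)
The plan is to reduce both claims to classical facts about the $p$-primary homotopy of $S^3$, cited from Toda, Selick/Gray and James as the introduction already indicates. As remarked before the statement, the $p$-primary part of $e_j$ is the exponent of $\pi_{4j+2}(S^3)_{(p)}$, so each part can be handled prime by prime.

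For (i), fix an odd prime $p$. The first step is the location of the first $p$-torsion: $\pi_k(S^3)_{(p)}=0$ for $3<k<2p$, and $\pi_{2p}(S^3)_{(p)}\cong\ZZ/p$~\cite[(13.6)$'$, p.~177]{Toda}. So if $\pi_{4j+2}(S^3)_{(p)}\neq0$, then $4j+2\ge 2p$, that is, $p\le 2j+1$.

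The second step, the exponent bound $p^2\nmid e_j$, is the real content. I would invoke the theorem of Selick (for odd primes, a refinement of Gray's result and of Cohen--Moore--Neisendorfer) that $p$ annihilates the $p$-primary component of $\pi_*(S^3)$ for $p$ odd. Equivalently, the $p$-primary exponent of $S^3$ is $p$; this is the odd-primary analogue of James's theorem. It gives $\pi_{4j+2}(S^3)_{(p)}$ exponent dividing $p$, hence $p^2\nmid e_j$. If one wants to avoid Selick, Toda's results on the odd-primary components of $\pi_*(S^3)$ in a stable range also suffice for the small cases, but not uniformly in $j$. I therefore expect the main obstacle to be citing a theorem that covers all $j$ at once, namely Selick's. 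The bibliography does not yet contain it, so it would have to be added as a reference.

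For (ii), I use James's theorem~\cite[Corollary~(1.22), p.~80]{James57}: $4$ annihilates the $2$-primary component of $\pi_*(S^3)$, that is, there is no element of order $8$. Hence the exponent of $\pi_{4j+2}(S^3)_{(2)}$ divides $4$. No computation is needed beyond these citations, and the lemma follows.
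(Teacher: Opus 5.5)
Your proposal is correct and follows the paper's proof: Toda's (13.6)$'$ places the first $p$-torsion in $\pi_{2p}(S^3)$, giving $p\le 2j+1$; Selick's theorem that $p$ annihilates the $p$-primary component of $\pi_k(S^3)$ for $k>3$ gives $p^2\nmid e_j$; and James's Corollary~(1.22), that $\pi_*(S^3)$ has no element of order $8$, gives part (2). The one correction is that Selick's paper is already in the bibliography as \cite{Selick78}, so no reference needs to be added.
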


\begin{proof}
(1) The $p$-primary component of $\pi_k(S^3)$ vanishes for $3<k<2p$, the first nonzero one being
$\pi_{2p}(S^3)_{(p)}\cong\ZZ/p$ \cite[(13.6)$'$, p.~177]{Toda}. So $p\mid e_j$ forces $4j+2\ge 2p$, that is $p\le 2j+1$. By
Selick's theorem~\cite[Theorem, p.~407]{Selick78}, every nonzero element of the $p$-primary component of $\pi_k(S^3)$,
$k>3$, has order $p$. Hence $p^2\nmid e_j$.

(2) By James~\cite[Corollary~(1.22), p.~80]{James57}, the homotopy groups of $S^{n+1}$ contain no elements of
order $2^{k}$, where $k = n+1$ or $2n+2$ according as $n$ is even or odd. For $n=2$ this says that
$\pi_*(S^3)$ has no element of order $8$, so the $2$-primary component of each $\pi_{4j+2}(S^3)$ has exponent
dividing $4$.
\end{proof}

For a nonzero rational number $x$, we write $v_2(x)$ for its $2$-adic valuation: the integer $v$ such that
$x=2^v a/c$ with $a$ and $c$ odd integers.

\begin{corollary}\label{cor:k0-arithmetic}
For every $b\ge2$:
\begin{enumerate}
\item every prime divisor of $k_0(b)$ is at most $2b-1$;
\item $v_2(k_0(b))\le 2(b-1)$;
\item if $b\ge5$, then $v_2(k_0(b))\le 2b-5$.
\end{enumerate}
\end{corollary}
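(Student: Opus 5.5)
Items (i) and (ii) follow directly from Lemma~\ref{lem:exponents}; only item (iii) needs input about specific homotopy groups. Since $k_0(b)=\prod_{j=1}^{b-1}e_j$ and valuations add over products, I will work factor by factor.

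For (i), let $p$ be a prime dividing $k_0(b)$. Then $p$ divides some $e_j$ with $1\le j\le b-1$. If $p$ is odd, Lemma~\ref{lem:exponents}(1) gives $p\le 2j+1\le 2b-1$. If $p=2$, then $p\le3\le 2b-1$ because $b\ge2$.

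For (ii), Lemma~\ref{lem:exponents}(2) gives $v_2(e_j)\le2$ for each $j$. Summing over the $b-1$ factors yields $v_2(k_0(b))\le 2(b-1)$.

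For (iii), it suffices to find three indices among $j=1,\dots,4$ whose $2$-primary exponents are smaller than the uniform bound $4$. Then
\[
v_2(k_0(b))\le 2(b-1)-3=2b-5 \quad\text{for every } b\ge5.
\]
I will read off the $2$-primary parts from Toda's tables~\cite[Tables~I--II, pp.~186--187]{Toda}:
\begin{itemize}
\item $\pi_6(S^3)=\ZZ/12$, so the $2$-part is $\ZZ/4$ and $v_2(e_1)=2$;
\item $\pi_{10}(S^3)=\ZZ/15$, so the $2$-part is trivial and $v_2(e_2)=0$;
\item $\pi_{14}(S^3)=\ZZ/84\oplus(\ZZ/2)^2$, so the $2$-part has exponent $4$ and $v_2(e_3)=2$;
\item $\pi_{18}(S^3)=\ZZ/30\oplus\ZZ/2$, so the $2$-part is $(\ZZ/2)^2$ and $v_2(e_4)=1$.
\end{itemize}
The deficits from the bound $2$ are therefore $0,2,0,1$ for $j=1,\dots,4$, which total $3$. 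For $j\ge5$ I will keep the bound $v_2(e_j)\le2$ from Lemma~\ref{lem:exponents}(2). Adding up gives $v_2(k_0(b))\le 2(b-1)-3=2b-5$ for all $b\ge5$.

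The main obstacle is quoting these tabulated groups correctly, especially their $2$-primary exponents. The whole of the saving comes from $\pi_{10}(S^3)$ having no $2$-torsion and from the $2$-part of $\pi_{18}(S^3)$ being elementary abelian. If my memory of $\pi_{18}(S^3)$ turns out to be wrong, I will check whether some other $j\le 4$ gives a deficit of at least one. If none does, the bound I can justify weakens to $2b-4$, and I will adjust how (iii) is used later in the paper accordingly.
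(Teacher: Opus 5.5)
Your proof is correct and follows the paper's argument exactly. Parts (i) and (ii) come from Lemma~\ref{lem:exponents}, and part (iii) comes from Toda's values $v_2(e_1),\dots,v_2(e_4)=2,0,2,1$ together with the bound $2$ for $j\ge5$, giving $2(b-1)-3=5+2(b-5)=2b-5$. One small misquote: Toda gives $\pi_{18}(S^3)\cong\ZZ/30$, whose $2$-primary part is $\ZZ/2$ rather than $(\ZZ/2)^2$, but the exponent is $2$ either way, so $v_2(e_4)=1$ and nothing changes.
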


\begin{proof}
For an odd prime $p\mid k_0(b)$, some $e_j$ with $1\le j\le b-1$ is divisible by $p$, so
$p\le 2j+1\le 2b-1$ by Lemma~\ref{lem:exponents}(1); the prime $2$ also satisfies the bound since $b\ge2$.
Part (2) follows from Lemma~\ref{lem:exponents}(2), since $k_0(b)$ is a product of $b-1$ factors $e_j$.
For (3), Toda's tables give $\pi_6(S^3)\cong\ZZ/12$, $\pi_{10}(S^3)\cong\ZZ/15$
\cite[Table~I, p.~186]{Toda}, $\pi_{14}(S^3)\cong\ZZ/84\oplus(\ZZ/2)^2$ and $\pi_{18}(S^3)\cong\ZZ/30$
\cite[Table~II, p.~187]{Toda}. Hence $e_1,e_2,e_3,e_4 = 12,15,84,30$, and
$v_2(e_1e_2e_3e_4) = 2+0+2+1 = 5$. Each of the remaining $b-5$ factors $e_j$, $5\le j\le b-1$, contributes at most
$2$ by Lemma~\ref{lem:exponents}(2), so $v_2(k_0(b))\le 5+2(b-5) = 2b-5$.
\end{proof}

\begin{remark}\label{rem:k0-values}
Toda's tables also give $\pi_{22}(S^3)\cong\ZZ/132\oplus\ZZ/2$ \cite[Table~II, p.~187]{Toda}, so
$e_1,\dots,e_5 = 12, 15, 84, 30, 132$ and
$k_0(2),\dots,k_0(6) = 12,\ 180,\ 15120,\ 453600,\ 59875200$. The values $k_0(2)=12$ and $k_0(3)=180$ are the thresholds
in~\cite[Lemma~3.1]{DHZ} and~\cite[Lemma~3.1]{CDZ}.
\end{remark}

\section{The $2$-adic valuation of the Eells--Kuiper invariant}\label{sec:twoadic}

In this section we determine the $2$-adic valuation of the coefficient $r_b$ in the Eells--Kuiper formula
$\mu(\Sigma_{\rho_b,\beta})=k(\beta)\,r_b$ of Section~\ref{sec:ek}. Recall from Section~\ref{subsec:k0-arithmetic} that the $2$-adic valuation $v_2(x)$ of a
nonzero rational number $x$ is the integer $v$ such that $x=2^v a/c$ with $a$ and $c$ odd integers. With $m=b+1$, we show that the $\Ahat$- and $L$-contributions $F_{\Ahat}(b)$ and
$t_mF_L(b)$ have valuations $-m$ and $-(2m-1)+v_2(m)$ respectively, for every $b$. The proof rests on two observations. First, the
Lagrange--B\"urmann identity of Lemma~\ref{lem:lagrange} expresses each contribution as one coefficient of an explicit power
series. Second, these series satisfy functional equations coming from the double-angle identities for $\tanh 2x$
and $\cosh 2x$, and these equations force $2$-adic integrality. Combined with Corollary~\ref{cor:k0-arithmetic}, this shows that
$k_0(b)\,r_b$ has negative $2$-adic valuation, and hence $\mu\neq0$, for all $b\not\equiv 63 \pmod{64}$.

Throughout, let $m=b+1$. For a commutative ring $R$, we write $R[[T]]$ for the ring of formal power series in $T$
with coefficients in $R$. We use the numbers $F_Q(b)$ of
\eqref{eq:ek-FQ}, for a power series $Q(U)\in\QQ[[U]]$ with $Q(0)=1$; there $[U^{b}] = [U^{m-1}]$. In
particular $F_{\Ahat}$ and $F_L$ are defined by the characteristic power series $Q_{\Ahat}$ and $Q_L$ of
Section~\ref{subsec:ek-statement}.

\subsection{Generating functions}
We use the notation $\varphi$ of Lemma~\ref{lem:lagrange}. For the $L$-genus, $\varphi(x) = \tanh x$ and
$1/Q_L(4x^2) = \tanh(2x)/(2x)$. For the $\Ahat$-genus, $\varphi(x) = 2\sinh(x/2)$, $\varphi(x)^2 = 2(\cosh x-1)$
and $1/Q_{\Ahat}(4x^2) = \sinh x/x$ (see the proof of Proposition~\ref{prop:ek-closed-form}). With the even series
\[
 T = \tanh^2 x, \qquad W = \cosh x - 1,
\]
the second form of Lemma~\ref{lem:lagrange} therefore reads
\begin{equation}\label{eq:gen-fns}
 \sum_{m\ge0} F_L(m-1)\,T^m = \frac{\tanh 2x}{2x},\qquad
 \sum_{m\ge0} 2^mF_{\Ahat}(m-1)\,W^m = \frac{\sinh x}{x}.
\end{equation}
Since $T = x^2+O(x^4)$ and $W = x^2/2+O(x^4)$, every even power series in $x$ can be written uniquely as a
power series in $T$, and also as one in $W$.

\subsection{Functional equations from the double-angle identities}
In this subsection we show that the generating functions in~\eqref{eq:gen-fns} satisfy functional equations coming
from the double-angle identities (Lemma~\ref{lem:functional}). We then use these equations to write the generating
functions as infinite products with $2$-adic integer coefficients (Lemma~\ref{lem:integrality}). This integrality
gives the valuations in Section~\ref{subsec:valuations}.

Define $h\in\QQ[[T]]$ and $e\in\QQ[[W]]$ by
\[
 h(\tanh^2x) = \frac{\tanh x}{x},\qquad e(\cosh x-1) = \frac{\sinh x}{x}.
\]
Let $\varrho(T) = 4T/(1+T)^2$ and $\varsigma(W) = 4W+2W^2$.

\begin{lemma}\label{lem:functional}
The series $h$ and $e$ satisfy
\[
 h(T) = (1+T)\,h\bigl(\varrho(T)\bigr),\qquad e\bigl(\varsigma(W)\bigr) = (1+W)\,e(W),
\]
and the generating functions in \eqref{eq:gen-fns} are $h(\varrho(T)) = h(T)/(1+T)$ and $e(W)$ respectively.
\end{lemma}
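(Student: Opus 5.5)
The plan is to derive both functional equations by substituting $2x$ for $x$ in the defining identities of $h$ and $e$ and then applying the double-angle formulas. The relevant identities are $\tanh 2x=2\tanh x/(1+\tanh^2x)$ and $\cosh 2x-1=2\sinh^2x=2(\cosh^2x-1)$. Afterwards we read off the two generating functions of \eqref{eq:gen-fns}. Throughout we work in the ring of even power series in $x$. As noted after \eqref{eq:gen-fns}, an even series is uniquely a power series in $T=\tanh^2x$, and also uniquely a power series in $W=\cosh x-1$. It therefore suffices to verify each identity after the substitution $T=\tanh^2x$ or $W=\cosh x-1$. Substituting into a series without constant term is legitimate, since $\varrho(T)=4T+O(T^2)$ and $\varsigma(W)=4W+O(W^2)$.

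For $h$, set $T=\tanh^2x$. The double-angle identity gives $\tanh^2 2x=4T/(1+T)^2=\varrho(T)$. Hence, by definition, $h(\varrho(T))=\tanh(2x)/(2x)$. On the other hand,
\[
\frac{\tanh 2x}{2x}=\frac{\tanh x}{x}\cdot\frac{1}{1+\tanh^2x}=\frac{h(T)}{1+T}.
\]
This proves $h(T)=(1+T)\,h(\varrho(T))$. It also identifies the first generating function in \eqref{eq:gen-fns}, which is $\tanh(2x)/(2x)$ written as a series in $T$, with $h(\varrho(T))=h(T)/(1+T)$. The only point to check is that composition of formal series is compatible with the substitution $x\mapsto 2x$. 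This holds because both sides are even series in $x$, and the correspondence with series in $T$ is a ring isomorphism that respects composition with the series $x\mapsto\tanh^2(2x)$.

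For $e$, set $W=\cosh x-1$. Then $\cosh 2x-1=2(\cosh^2x-1)=2\bigl((1+W)^2-1\bigr)=4W+2W^2=\varsigma(W)$. Also $\sinh 2x/(2x)=\bigl(\sinh x/x\bigr)\cosh x=(1+W)\,e(W)$. Since the left side equals $e(\cosh 2x-1)=e(\varsigma(W))$, we obtain $e(\varsigma(W))=(1+W)\,e(W)$. Finally, the second series in \eqref{eq:gen-fns} is $\sinh x/x$ expressed in $W$, which is $e(W)$ by definition.

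The only mild obstacle is bookkeeping rather than substance. One must justify that an identity of even series in $x$ translates into an identity of formal series in $T$ (respectively $W$), including under the substitutions $\varrho$ and $\varsigma$. I would handle this once, by observing that $x^2\mapsto T$ (respectively $W$) defines an automorphism of $\QQ[[x^2]]$.
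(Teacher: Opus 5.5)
Your proof is correct and follows the same route as the paper. Both derive $\tanh^2 2x=\varrho(T)$ and $\tanh 2x/(2x)=h(T)/(1+T)$ from the double-angle formula for $\tanh$, and $\cosh 2x-1=\varsigma(W)$ and $\sinh 2x/(2x)=(1+W)\,e(W)$ from the double-angle formulas for $\cosh$ and $\sinh$, and then read off the two generating functions. Your remarks on formal substitution only spell out what the paper leaves implicit.
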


\begin{proof}
From $\tanh 2x = 2\tanh x/(1+\tanh^2x)$ we get $\tanh^2 2x = \varrho(T)$ and
\[
 \frac{\tanh 2x}{2x} = \frac{1}{1+T}\cdot\frac{\tanh x}{x} = \frac{h(T)}{1+T}.
\]
The left-hand side also equals $h(\tanh^22x) = h(\varrho(T))$, which gives the first equation and the claim about
the $L$-generating function. Similarly, $\cosh 2x - 1 = 2(1+W)^2-2 = \varsigma(W)$ and
$\sinh 2x/(2x) = \cosh x\cdot\sinh x/x = (1+W)e(W)$. The left-hand side is $e(\varsigma(W))$.
\end{proof}

We write $\ZZ_2$ for the ring of $2$-adic integers and $\QQ_2$ for its field of fractions. A rational number $x$
lies in $\ZZ_2$ if and only if $x=0$ or $v_2(x)\ge0$. A sequence in $\ZZ_2[[T]]$ converges coefficientwise if each
coefficient converges $2$-adically. For a power series $P$ with $P(0)=0$, we write $P^{\circ j}$ for the $j$-fold
composite $P\circ\cdots\circ P$, with $P^{\circ0}$ the identity series.

\begin{lemma}\label{lem:integrality}
The products
\[
 H(T) = \prod_{j\ge0}\bigl(1+\varrho^{\circ j}(T)\bigr),\qquad
 E(W) = \prod_{j\ge0}\bigl(1+\varsigma^{\circ j}(W)\bigr)^{-1}
\]
converge coefficientwise in $\ZZ_2[[T]]$ and $\ZZ_2[[W]]$, and $h = H$, $e = E$. In particular
$h\in\ZZ_2[[T]]$, $e\in\ZZ_2[[W]]$, and
\[
 e(W)\equiv (1+W)^{-1}\pmod 2 .
\]
\end{lemma}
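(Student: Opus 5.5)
The plan is to iterate the functional equations of Lemma~\ref{lem:functional} and pass to the limit $2$-adically. For $h$, applying $h(T)=(1+T)h(\varrho(T))$ repeatedly gives, for every $N\ge1$,
\[
 h(T)=\Bigl(\prod_{j=0}^{N-1}\bigl(1+\varrho^{\circ j}(T)\bigr)\Bigr)\,h\bigl(\varrho^{\circ N}(T)\bigr).
\]
Similarly $e(\varsigma(W))=(1+W)e(W)$ gives $e(W)=(1+W)^{-1}e(\varsigma(W))$. Iterating,
\[
 e(W)=\Bigl(\prod_{j=0}^{N-1}\bigl(1+\varsigma^{\circ j}(W)\bigr)^{-1}\Bigr)\,e\bigl(\varsigma^{\circ N}(W)\bigr).
\]
These are identities in $\QQ[[T]]$ and $\QQ[[W]]$. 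The substitutions make sense because $\varrho$ and $\varsigma$ have no constant term.

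The key estimate is the $2$-adic smallness of the iterates. Write $\varrho(T)=4T(1+T)^{-2}$; since $(1+T)^{-2}\in\ZZ[[T]]$, we have $\varrho\in 4T\,\ZZ[[T]]$. Likewise $\varsigma(W)=2W(2+W)\in 2W\,\ZZ[[W]]$. I will show by induction that $\varrho^{\circ j}(T)\in 4^jT\,\ZZ[[T]]$ and $\varsigma^{\circ j}(W)\in 2^jW\,\ZZ[[W]]$. The point is that if $P\in 2^aT\ZZ[[T]]$, then $\varrho(P)=4P(1+P)^{-2}\in 2^{a+2}T\ZZ[[T]]$, because $(1+P)^{-2}\in\ZZ[[T]]$. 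The same argument works for $\varsigma$.

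It follows that $1+\varrho^{\circ j}(T)\to1$ and $(1+\varsigma^{\circ j}(W))^{-1}\to1$ coefficientwise in $\ZZ_2$. Here $(1+\varsigma^{\circ j})^{-1}$ lies in $\ZZ[[W]]$ and is congruent to $1$ modulo $2^j$. So the infinite products $H$ and $E$ converge coefficientwise in $\ZZ_2[[T]]$ and $\ZZ_2[[W]]$. To be precise, the coefficient of $T^n$ of a partial product stabilises modulo $2^{N}$ once the number of factors exceeds $N$.

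Next I compare with $h$ and $e$. Let $c_n$ be the coefficient of $T^n$ in $h(\varrho^{\circ N}(T))-h(0)$. It is a finite $\QQ$-linear combination of coefficients of $(\varrho^{\circ N})^k$ with $1\le k\le n$, and these are divisible by $4^{Nk}$. The coefficients of $h$ in degrees $\le n$ are fixed rationals with bounded denominators. Hence $c_n\to0$ $2$-adically as $N\to\infty$. Also $h(0)=1$, since $\tanh x/x\to1$, and likewise $e(0)=1$. Therefore $h(\varrho^{\circ N}(T))\to1$ coefficientwise. The partial products converge, and multiplication of power series is coefficientwise continuous. Letting $N\to\infty$ in the iterated identity gives $h=H$ in $\QQ_2[[T]]$, and likewise $e=E$. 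Since $h$ and $e$ have rational coefficients and $H$, $E$ have coefficients in $\ZZ_2$, we conclude $h\in\ZZ_2[[T]]$ and $e\in\ZZ_2[[W]]$.

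For the congruence, every factor of $E$ with $j\ge1$ is congruent to $1$ modulo $2$, because $\varsigma^{\circ j}\in2W\ZZ[[W]]$. Hence $e\equiv(1+W)^{-1}\pmod2$.

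The only delicate step is the limit argument. I need the bounded denominators of the fixed rational series $h$ and $e$ to be beaten by the $4^{Nk}$ (respectively $2^{Nk}$) divisibility. This is immediate once it is phrased coefficient by coefficient in a fixed degree $n$, so I expect no real obstacle.
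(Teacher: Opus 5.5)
Your proof is correct, but it identifies $h$ with $H$ by a different route from the paper.

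The paper's route works on $H$ itself. It passes to the limit in the telescoping identity for the partial products of $H$, using that composition with $\varrho$ (or $\varsigma$) is coefficientwise continuous. This gives $H(T)=(1+T)H(\varrho(T))$ and $E(\varsigma(W))=(1+W)E(W)$. It then proves a uniqueness statement: $d=h-H$ satisfies $d(T)=(1+T)\,d(\varrho(T))$ with $d(0)=0$, and comparing lowest terms $aT^k$ gives $a=4^ka$, so $d=0$.

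Your route works on $h$. You iterate the functional equation, writing $h$ as a finite partial product times the remainder $h(\varrho^{\circ N}(T))$. You then show that this remainder tends to $1$ coefficientwise, because $(\varrho^{\circ N})^k\in 4^{Nk}T^k\ZZ[[T]]$ outweighs the fixed denominators of $h_1,\dots,h_n$ in each degree $n$.

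Your approach avoids both the continuity of composition and the separate uniqueness lemma. The cost is a (trivial) quantitative estimate on the tail, which reuses the divisibility of the iterates already proved. The paper's uniqueness step, by contrast, is purely formal: it needs only that the linear coefficient of $\varrho$ and $\varsigma$ is $4\neq1$. It therefore shows that the functional equation together with $h(0)=1$ already characterises $h$ in $\QQ_2[[T]]$.

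Both arguments use the same estimates $\varrho^{\circ j}\in4^jT\ZZ[[T]]$ and $\varsigma^{\circ j}\in2^jW\ZZ[[W]]$ for convergence, and both obtain the congruence $e\equiv(1+W)^{-1}\pmod 2$ in the same way.
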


\begin{proof}
We have $\varrho(T)\in 4T\,\ZZ[[T]]$. If $P\in 4^jT\,\ZZ[[T]]$, then $\varrho(P) = 4P/(1+P)^2\in 4^{j+1}T\,\ZZ[[T]]$.
Hence $\varrho^{\circ j}\in 4^jT\,\ZZ[[T]]$. Similarly, $\varsigma(P) = 2P(2+P)$ shows
$\varsigma^{\circ j}\in 2^jW\,\ZZ[W]$. The $j$-th factor of each product is therefore congruent to $1$ modulo
$4^j$, respectively $2^j$. So the partial products stabilise modulo every power of $2$, and the products
converge in $\ZZ_2[[\,\cdot\,]]$ with constant term $1$.

Since $\varrho(0)=0$, the coefficient $[T^n]\,P(\varrho(T))$ is an integral linear combination of the coefficients
$p_0,\dots,p_n$ of $P$. So composition with $\varrho$ is coefficientwise continuous, and likewise for $\varsigma$.
Passing to the limit in the telescoping identity
\[
 (1+T)\prod_{j=0}^{N}\bigl(1+\varrho^{\circ j}(\varrho(T))\bigr) = \prod_{j=0}^{N+1}\bigl(1+\varrho^{\circ j}(T)\bigr)
\]
gives $H(T) = (1+T)H(\varrho(T))$. In the same way $E(\varsigma(W)) = (1+W)E(W)$.

For uniqueness, let $d = h-H\in\QQ_2[[T]]$. Then $d(T) = (1+T)\,d(\varrho(T))$ and $d(0) = 0$, because
$h(0) = H(0) = 1$. If $d\ne0$, let $aT^k$ with $k\ge1$ be its lowest term. The lowest term of
$(1+T)d(\varrho(T))$ is $a\,4^kT^k$, so $a = 4^ka$ and $a=0$, a contradiction. Hence $h = H$. The same argument,
with $\varsigma(W) = 4W+O(W^2)$, gives $e = E$. Finally, every factor of $E$ with $j\ge1$ is congruent to $1$
modulo $2$, so $E\equiv(1+W)^{-1}\pmod 2$.
\end{proof}

\subsection{The valuations}\label{subsec:valuations}
In this subsection we compute the $2$-adic valuations of $F_{\Ahat}(b)$ (Proposition~\ref{prop:A-valuation}) and of
$F_L(b)$ (Proposition~\ref{prop:L-valuation}), using the integrality of Lemma~\ref{lem:integrality}. For $F_L(b)$
we determine its residue modulo a power of $2$, which separates the term $k=1$ of the sum in the proof of
Proposition~\ref{prop:L-valuation} from the others.

\begin{proposition}\label{prop:A-valuation}
For every $b\ge0$, we have $v_2\bigl(F_{\Ahat}(b)\bigr) = -(b+1)$.
\end{proposition}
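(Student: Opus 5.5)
The plan is to read $F_{\Ahat}(b)$ off directly as a coefficient of the series $e$ of Lemma~\ref{lem:integrality}, and then use the congruence for $e$ modulo $2$. Set $m=b+1\ge1$. By the second form of Lemma~\ref{lem:lagrange}, applied with $Q=Q_{\Ahat}$ and $j=m$ (allowed since $j\ge1$, which covers $b=0$), we have $\varphi(x)^2=2(\cosh x-1)=2W$, so $\varphi(x)^{2m}=2^mW^m$. This gives the second identity in~\eqref{eq:gen-fns}:
\[
 \sum_{m\ge0}2^mF_{\Ahat}(m-1)\,W^m=\frac{\sinh x}{x}.
\]
By definition, $e(W)=e(\cosh x-1)=\sinh x/x$. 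Since $W=x^2/2+O(x^4)$, an even power series in $x$ has a unique expansion as a power series in $W$. Comparing coefficients therefore gives
\[
 2^mF_{\Ahat}(m-1)=[W^m]\,e(W)\qquad(m\ge1).
\]

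The key input is Lemma~\ref{lem:integrality}, which states that $e\in\ZZ_2[[W]]$ and $e(W)\equiv(1+W)^{-1}\pmod 2$. So $[W^m]e\in\ZZ_2$ and $[W^m]e\equiv(-1)^m\equiv1\pmod 2$. Hence $[W^m]e$ is a $2$-adic unit, that is, a nonzero rational number with $v_2=0$. It follows that $F_{\Ahat}(m-1)\neq0$ and $v_2\bigl(F_{\Ahat}(m-1)\bigr)=-m$, which is $v_2\bigl(F_{\Ahat}(b)\bigr)=-(b+1)$.

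I do not expect any step to be a real obstacle, since the substantive work is already in Lemma~\ref{lem:integrality}. The only point needing care is the bookkeeping between $b$ and $m$, together with the factor $2^m$ coming from $\varphi^2=2W$. To guard against an off-by-one error, I would check the case $b=0$ directly. Here $F_{\Ahat}(0)=[W^1]e$ divided by $2$, and $\sinh x/x=1+x^2/6+\cdots$ while $W=x^2/2+\cdots$, so $[W^1]e=1/3$ and $F_{\Ahat}(0)=1/6$. This has valuation $-1$, as claimed, and is consistent with the formula~\eqref{eq:ek-FQ}.
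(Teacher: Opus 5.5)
Your proposal is correct and is essentially the paper's proof: read off $2^mF_{\Ahat}(m-1)=[W^m]\,e(W)$ from \eqref{eq:gen-fns}, then use Lemma~\ref{lem:integrality} to see that this coefficient lies in $\ZZ_2$ and is congruent to $(-1)^m$ modulo $2$, hence is a $2$-adic unit. Your check at $b=0$, giving $F_{\Ahat}(0)=1/6$, is also right and agrees with \eqref{eq:ek-FQ}.
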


\begin{proof}
By \eqref{eq:gen-fns}, $2^mF_{\Ahat}(m-1) = [W^m]\,e(W)$. By Lemma~\ref{lem:integrality} this coefficient
lies in $\ZZ_2$ and is congruent to $(-1)^m$ modulo $2$, so it is a $2$-adic unit. Hence
$v_2(F_{\Ahat}(m-1)) = -m$.
\end{proof}

\begin{proposition}\label{prop:L-valuation}
For every $b\ge0$, we have $v_2\bigl(F_L(b)\bigr) = 2+v_2(b+1)$.
\end{proposition}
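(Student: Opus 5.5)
The plan is to use the $L$-generating function in \eqref{eq:gen-fns}, rewrite it through the functional equation of Lemma~\ref{lem:functional} as $H$ composed with $\varrho$, and expand coefficientwise. One term will carry exactly the valuation $2+v_2(m)$, and every other term will be $2$-adically strictly smaller, that is, have strictly larger valuation. Throughout, $m=b+1$.

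\emph{Step 1: a coefficient formula.} By \eqref{eq:gen-fns} and Lemma~\ref{lem:functional}, $F_L(m-1)=[T^m]\,h(\varrho(T))$. Write $h(T)=\sum_{k\ge0}h_kT^k$; by Lemma~\ref{lem:integrality}, $h_k\in\ZZ_2$. Since $\varrho(T)^k=4^kT^k(1+T)^{-2k}$, and composition with $\varrho$ is coefficientwise a finite sum, we get for $m\ge1$
\[
 F_L(m-1)=\sum_{k=1}^{m}h_k\,4^k\,(-1)^{m-k}\binom{m+k-1}{m-k}.
\]
The term $k=0$ contributes only to $T^0$, so it drops out.

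\emph{Step 2: the leading term.} From $\tanh x/x=1-x^2/3+O(x^4)$ and $T=x^2+O(x^4)$ we read off $h_1=-1/3$, a $2$-adic unit. The term $k=1$ is therefore $\pm4m/3$, of valuation exactly $2+v_2(m)$.

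\emph{Step 3: the remaining terms, the main point.} For $k\ge2$ we use the factorisation
\[
 \binom{m+k-1}{2k-1}=\frac{(m+k-1)!}{(m-k)!\,(2k-1)!}=\frac{m\prod_{i=1}^{k-1}(m^2-i^2)}{(2k-1)!}.
\]
This gives $v_2\binom{m+k-1}{m-k}\ge v_2(m)-v_2((2k-1)!)$. By Legendre's formula, $v_2((2k-1)!)=(2k-1)-s_2(2k-1)$, where $s_2$ is the binary digit sum. Since $2k-1\ge3$ is odd and not a power of two, $s_2(2k-1)\ge2$. Hence $v_2((2k-1)!)\le 2k-3$, and the $k$-th term has valuation at least $2k+v_2(m)-(2k-3)=v_2(m)+3$.

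The obstacle is exactly this sharpening. The crude bound $v_2((2k-1)!)\le 2k-2$ only gives valuation $\ge v_2(m)+2$, which does not separate the terms $k\ge2$ from the term $k=1$. The digit-sum bound is what makes the separation strict.

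\emph{Conclusion.} Every term with $k\ge2$ is divisible by $2^{v_2(m)+3}$, while the term $k=1$ has valuation exactly $2+v_2(m)$. By the ultrametric inequality, $v_2(F_L(b))=2+v_2(b+1)$. The degenerate case $m=1$ (that is, $b=0$) has only the term $k=1$, so the same conclusion holds there.
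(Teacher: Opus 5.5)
Your proposal is correct and follows the paper's proof essentially step for step. The paper also expands $h(\varrho(T))=\sum_k h_k4^k\theta(T)^k$ and isolates the term $k=1$ as $(-1)^m4m/3$. It bounds the terms $k\ge2$ by the same factorisation $(2k-1)!\binom{m+k-1}{2k-1}=m\prod_{i=1}^{k-1}(m^2-i^2)$ together with Legendre's formula, where it writes $s_2(2k-1)=s_2(k-1)+1$ in place of your observation that $2k-1\ge3$ is odd and so not a power of two.
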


\begin{proof}
Let $\theta(T) = T/(1+T)^2$, so that $\varrho = 4\theta$. By Lemma~\ref{lem:functional}, the $L$-generating
function is
\[
 \sum_{m\ge0}F_L(m-1)\,T^m = h(\varrho(T)) = \sum_{k\ge0} h_k\,4^k\,\theta(T)^k,
\]
where $h_k = [T^k]h\in\ZZ_2$ by Lemma~\ref{lem:integrality}. For $1\le k\le m$,
\[
 [T^m]\,\theta^k = [T^{m-k}](1+T)^{-2k} = (-1)^{m-k}\binom{m+k-1}{2k-1},
\]
and $[T^m]\theta^k = 0$ for $k>m$. Hence
\[
 F_L(m-1) = \sum_{k=1}^{m}(-1)^{m-k}\,4^k\,h_k\binom{m+k-1}{2k-1}.
\]
The term $k=1$: since $\tanh x/x = 1 - x^2/3 + O(x^4)$ and $T = x^2+O(x^4)$, we have $h_1 = -1/3$. So this
term equals $(-1)^m\,4m/3$, whose valuation is $2+v_2(m)$.

The terms $k\ge2$: we have
\[
 (2k-1)!\binom{m+k-1}{2k-1} = \prod_{i=-(k-1)}^{k-1}(m+i) = m\prod_{i=1}^{k-1}(m^2-i^2),
\]
an integer multiple of $m$. By Legendre's formula, $v_2((2k-1)!) = (2k-1)-s_2(2k-1)$, where $s_2$ is the binary
digit sum. Since $s_2(2k-1) = s_2(k-1)+1$, this equals $2k-2-s_2(k-1)$, which is at most $2k-3$ for $k\ge2$.
Therefore
\[
 v_2\Bigl(4^kh_k\binom{m+k-1}{2k-1}\Bigr)\ \ge\ 2k+v_2(m)-(2k-3) = v_2(m)+3 .
\]
Summing, $F_L(m-1)\equiv(-1)^m\,4m/3\pmod{2^{v_2(m)+3}}$. The right-hand side has valuation $2+v_2(m)$, hence
$v_2(F_L(m-1)) = 2+v_2(m)$, which is the claim with $m=b+1$.
\end{proof}

\subsection{Consequence for the Eells--Kuiper invariant}
In this subsection we combine the valuations of Section~\ref{subsec:valuations} into the valuation of $r_b$
(Corollary~\ref{cor:rb-valuation}). We then compare it with the bound on $v_2(k_0(b))$ of
Corollary~\ref{cor:k0-arithmetic}, which shows that $\mu(\Sigma_{\rho_b,\beta})\ne0$ for $\beta$ with $k(\beta)=k_0(b)$ whenever $b\not\equiv63\pmod{64}$
(Theorem~\ref{thm:mu-nonzero}).

Recall from Section~\ref{sec:ek} that $t_m = -1/\bigl(2^{2m+1}(2^{2m-1}-1)\bigr)$, that $a_m = 1$ for $m$ even and
$a_m = 2$ for $m$ odd, and that, by Theorem~\ref{thm:ek},
\[
 \mu\bigl(\Sigma_{\rho_b,\beta}\bigr) = k(\beta)\,r_b\quad\text{in }\QQ/\ZZ,\qquad
 r_b = \frac{F_{\Ahat}(b) - t_m F_L(b)}{a_m}.
\]
Below we use only the $2$-adic valuation of $r_b$, so the sign conventions of Section~\ref{sec:ek} play no role.

\begin{corollary}\label{cor:rb-valuation}
For every $b\ge2$,
\[
 v_2(r_b) = \begin{cases} -2m & m \text{ odd},\\ -2m+1+v_2(m) & m\text{ even}.\end{cases}
\]
\end{corollary}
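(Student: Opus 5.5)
We compute the valuation of $r_b=\bigl(F_{\Ahat}(b)-t_mF_L(b)\bigr)/a_m$ from the valuations of its two summands, which are already known. By Proposition~\ref{prop:A-valuation}, $v_2(F_{\Ahat}(b))=-m$. For the second summand, $v_2(t_m)=-(2m+1)$, because $2^{2m-1}-1$ is odd. Proposition~\ref{prop:L-valuation} gives $v_2(F_L(b))=2+v_2(m)$. Together these give
\[
 v_2\bigl(t_mF_L(b)\bigr)=-(2m+1)+2+v_2(m)=-2m+1+v_2(m).
\]

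Next we check that the two valuations differ, so that the ultrametric inequality becomes an equality. The difference of the two valuations is
\[
 -m-\bigl(-2m+1+v_2(m)\bigr)=m-1-v_2(m).
\]
This is positive as soon as $2^{v_2(m)}\le m$ and $m\ge3$. Indeed $v_2(m)\le\log_2 m<m-1$ for $m\ge3$, and $m=b+1\ge3$ because $b\ge2$. Hence the $L$-term strictly dominates 2-adically, and
\[
 v_2\bigl(F_{\Ahat}(b)-t_mF_L(b)\bigr)=-2m+1+v_2(m).
\]
In particular the difference is nonzero, so $r_b\ne0$.

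Finally we divide by $a_m$. If $m$ is even, then $a_m=1$ and $v_2(r_b)=-2m+1+v_2(m)$. If $m$ is odd, then $v_2(m)=0$ and $a_m=2$, so $v_2(r_b)=-2m+1-1=-2m$. These are exactly the two cases of the statement.

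The only step that needs care is the strict inequality between the two valuations, and it holds comfortably once $b\ge2$. Everything else is bookkeeping: we need $2^{2m-1}-1$ odd, and we need the valuations from Propositions~\ref{prop:A-valuation} and~\ref{prop:L-valuation} applied with $m=b+1$.
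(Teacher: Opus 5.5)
Your proposal is correct and matches the paper's proof step for step. It uses the same two valuations from Propositions~\ref{prop:A-valuation} and~\ref{prop:L-valuation}, the same strict inequality $m-1-v_2(m)>0$ for $m\ge3$ to make the $L$-term dominate, and the same division by $a_m$.
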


\begin{proof}
Since $2^{2m-1}-1$ is odd, Proposition~\ref{prop:L-valuation} gives
$v_2(t_mF_L(b)) = -(2m+1)+2+v_2(m) = -2m+1+v_2(m)$. By Proposition~\ref{prop:A-valuation},
$v_2(F_{\Ahat}(b)) = -m$. For $m\ge3$ we have $m-v_2(m)\ge2$, so $-m > -2m+1+v_2(m)$. The two terms have
different valuations, so $v_2(F_{\Ahat}(b)-t_mF_L(b)) = -2m+1+v_2(m)$. Dividing by $a_m$ subtracts
$v_2(a_m)$, which is $1$ for $m$ odd (where $v_2(m)=0$) and $0$ for $m$ even.
\end{proof}

Together with Proposition~\ref{prop:degree-tower}, which provides $\beta$ with $k(\beta)=k_0(b)$, the next theorem
proves Theorem~\ref{thm:main.thm.A} (see Section~\ref{sec:main}).

\begin{theorem}\label{thm:mu-nonzero}
Let $b\ge2$ with $b\not\equiv63\pmod{64}$, and let $\beta$ be a smooth conjugation-equivariant map for $\rho_b$ with $k(\beta) = k_0(b)$
(Proposition~\ref{prop:degree-tower}). Then $\mu(\Sigma_{\rho_b,\beta})\ne0$.
\end{theorem}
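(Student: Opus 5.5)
The plan is to show that $k_0(b)\,r_b\notin\ZZ$ by proving that its $2$-adic valuation is negative. By Theorem~\ref{thm:ek}, $\mu(\Sigma_{\rho_b,\beta})=k(\beta)\,r_b=k_0(b)\,r_b$ in $\QQ/\ZZ$. So it suffices to prove
\[
 v_2\bigl(k_0(b)\bigr)+v_2(r_b)<0 .
\]
Corollary~\ref{cor:rb-valuation} gives a finite value of $v_2(r_b)$, so in particular $r_b\ne0$. The upper bounds on $v_2(k_0(b))$ come from Corollary~\ref{cor:k0-arithmetic}. I would split into two cases according to the parity of $m=b+1$.

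\emph{Case $m$ odd.} Corollary~\ref{cor:rb-valuation} gives $v_2(r_b)=-2m=-2b-2$. Corollary~\ref{cor:k0-arithmetic}(2) gives $v_2(k_0(b))\le2b-2$. The sum is therefore at most $-4<0$, and no congruence condition on $b$ is needed here.

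\emph{Case $m$ even.} Corollary~\ref{cor:rb-valuation} gives $v_2(r_b)=-2m+1+v_2(m)=-2b-1+v_2(b+1)$. I would treat this case in two ranges of $b$.
\begin{enumerate}
\item For $b\ge5$, Corollary~\ref{cor:k0-arithmetic}(3) gives $v_2(k_0(b))\le2b-5$. The sum is then at most $-6+v_2(b+1)$. This is negative exactly when $v_2(b+1)\le5$, that is, when $64\nmid b+1$, which is the hypothesis $b\not\equiv63\pmod{64}$.
\item For $2\le b\le4$, the only value with $m$ even is $b=3$. There $m=4$ and $v_2(r_3)=-8+1+2=-5$. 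Since $k_0(3)=180$ by Remark~\ref{rem:k0-values}, we have $v_2(k_0(3))=2$, so the total is $-3$. Alternatively, the weaker bound $2b-2=4$ also gives a total of $-1<0$.
\end{enumerate}

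In both cases $v_2(k_0(b)\,r_b)<0$. Hence $k_0(b)\,r_b$ is not an integer, and so $\mu(\Sigma_{\rho_b,\beta})\ne0$ in $\QQ/\ZZ$.

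There is no real obstacle left at this stage, because the substantive inputs, namely the exact valuation of $r_b$ and the bound from Toda's tables, are already established. The only delicate point is the bookkeeping in the even case. The margin there is exactly $6-v_2(m)$, and it is the Toda improvement from $2b-2$ to $2b-5$ that pushes the excluded class from $b\equiv7\pmod 8$ down to $b\equiv63\pmod{64}$.
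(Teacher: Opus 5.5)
Your proof is correct and follows the paper's argument: you combine the exact valuation of $r_b$ from Corollary~\ref{cor:rb-valuation} with the bounds on $v_2(k_0(b))$ from Corollary~\ref{cor:k0-arithmetic}, splitting by the parity of $m$. The only difference is in the case split: the paper applies the bound $2(b-1)$ to all odd $b$ and uses the Toda bound $2b-5$ only when $b\equiv7\pmod 8$, where $b\ge7$ holds automatically, whereas you apply $2b-5$ directly to every odd $b\ge5$ and check $b=3$ by hand.
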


\begin{proof}
By Corollary~\ref{cor:k0-arithmetic}(2), $v_2(k_0(b))\le 2(b-1) = 2m-4$. By Corollary~\ref{cor:rb-valuation}:
\begin{itemize}
\item if $b$ is even, then $m$ is odd and $v_2(r_bk_0(b))\le -2m+2m-4 = -4$;
\item if $b$ is odd, then $m$ is even and $v_2(r_bk_0(b))\le -2m+1+v_2(m)+2m-4 = v_2(m)-3$.
\end{itemize}
The bound $v_2(m)-3$ is negative unless $8\mid m$, that is unless $b\equiv7\pmod 8$. In that case $b\ge7$, so
Corollary~\ref{cor:k0-arithmetic}(3) gives $v_2(k_0(b))\le 2b-5 = 2m-7$, and
$v_2(r_bk_0(b))\le -2m+1+v_2(m)+2m-7 = v_2(m)-6$. This is negative unless $64\mid m$, that is unless
$b\equiv63\pmod{64}$. In every remaining case $r_bk_0(b)$ has negative $2$-adic valuation, so it is not an
integer and $\mu(\Sigma_{\rho_b,\beta}) = k_0(b)\,r_b\ne0$ in $\QQ/\ZZ$.
\end{proof}

\section{Exotic star spheres in infinitely many dimensions}\label{sec:main}

In this section, we combine the results of Sections~\ref{sec:ek}--\ref{sec:twoadic} to prove Theorem~\ref{thm:main.thm.A} and Corollary~\ref{cor:main.cor.B}. We then consider the class $b\equiv63\pmod{64}$ in Section~\ref{subsec:b63}.

\subsection{Proofs of Theorem~\ref{thm:main.thm.A} and Corollary~\ref{cor:main.cor.B}}
In this subsection we prove the main results of the paper. For $b\geq2$ with $b\not\equiv63\pmod{64}$,
Proposition~\ref{prop:degree-tower} provides a smooth conjugation-equivariant map $\beta$ with slice degree $k_0(b)$.
Theorem~\ref{thm:ek} gives $\mu(\Sigma_{\rho_b,\beta})=k_0(b)\,r_b$, and Theorem~\ref{thm:mu-nonzero} shows that this
is nonzero. Hence $\Sigma_{\rho_b,\beta}$ is an exotic sphere (Theorem~\ref{thm:main.thm.A}). The curvature theorems
of Section~\ref{sec:prelim}, together with Lemma~\ref{lem:action-bound}, then give $\Sigma_{\rho_b,\beta}$ a metric
of positive sectional curvature (Corollary~\ref{cor:main.cor.B}).

\begin{proof}[Proof of Theorem~\ref{thm:main.thm.A}]
Let $b\geq2$ with $b\not\equiv63\pmod{64}$, and let $n=4b+3$. By Proposition~\ref{prop:degree-tower}, there is a smooth conjugation-equivariant map \[
  \beta\colon S(V_b)\longrightarrow G_{\mathrm{conj}}
  \qquad\text{with}\qquad k(\beta)=k_0(b).
\]
By Theorem~\ref{thm:ek}, the Eells--Kuiper invariant of its star sphere is
\[
  \mu(\Sigma_{\rho_b,\beta})=k_0(b)\,r_b\in\Q/\Z .
\]
We now verify that this is nonzero. Since $b\geq2$, $b\not\equiv63\pmod{64}$ and
$k(\beta)=k_0(b)$, Theorem~\ref{thm:mu-nonzero} applies, and its proof shows that
\[
  v_2\bigl(k_0(b)\,r_b\bigr)<0 .
\]
Hence $k_0(b)\,r_b\notin\Z$ and $\mu(\Sigma_{\rho_b,\beta})\neq0$. Since $\mu(S^n)=0$ and $\mu$ is an invariant of oriented
diffeomorphism types, $\Sigma_{\rho_b,\beta}$ is not diffeomorphic to $S^n$ preserving
orientation. Reversing the orientation changes the sign of $\mu$, so $\Sigma_{\rho_b,\beta}$ is not diffeomorphic
to $S^n$. Thus $\Sigma_{\rho_b,\beta}$ is an exotic $n$-sphere, and it is a star sphere
for $\rho_b$ by construction.
\end{proof}

\begin{proof}[Proof of Corollary~\ref{cor:main.cor.B}]
Let $n\equiv3\pmod4$ with $n\geq11$ and $n\not\equiv255\pmod{256}$, and set
\[
  b=\frac{n-3}{4}.
\]
Then $b\geq2$, and $b\not\equiv63\pmod{64}$, since $n=4b+3\equiv255\pmod{256}$ if and only if $b\equiv63\pmod{64}$. Let $\Sigma_{\rho_b,\beta}$ be the exotic star sphere of Theorem~\ref{thm:main.thm.A}.
By Lemma~\ref{lem:action-bound},
\[
  \|K_y\|\leq1\leq2\qquad\text{for every }y\in S(V_b),
\]
so Theorem~\ref{thm:curvature-bounded} gives a metric of strictly positive sectional curvature
on $\Sigma_{\rho_b,\beta}$. Theorem~\ref{thm:curvature-general} gives the same conclusion.
\end{proof}

\subsection{The class \texorpdfstring{$b\equiv63\pmod{64}$}{b = 63 mod 64}}\label{subsec:b63}
In this subsection we treat the remaining class $b\equiv63\pmod{64}$, where the bound on $v_2(k_0(b))$ of
Corollary~\ref{cor:k0-arithmetic} is too weak for the $2$-adic argument of Theorem~\ref{thm:mu-nonzero}. We use
instead an odd prime. A primitive prime divisor of $2^{2m-1}-1$ that does not divide the numerator of $F_L(b)$
divides the denominator of $r_b$ but not $k_0(b)$ (Lemma~\ref{lem:zsigmondy} and Corollary~\ref{cor:k0-arithmetic}).
Whether it divides the numerator is a finite computation, which we carry out for $b\equiv63\pmod{64}$ with
$b<2^{14}$ (Proposition~\ref{prop:computation}). The resulting Corollary~\ref{cor:computation} is computer-assisted.
Theorem~\ref{thm:main.thm.A} and Corollary~\ref{cor:main.cor.B} do not depend on it.

By Corollary~\ref{cor:k0-arithmetic}, every prime factor of $k_0(b)$ is at most $2b-1$. Hence, if the denominator of $r_b$ in lowest terms has a prime factor $p>2b-1$, then $p$ divides the denominator of $k_0(b)\,r_b$, and $k_0(b)\,r_b\notin\Z$. Such a prime can come from the factor $2^{2m-1}-1$ in the denominator of $t_m$, where $m=b+1$.

\begin{lemma}\label{lem:zsigmondy}
Let $b\geq2$, $m=b+1$, and let $p$ be a prime divisor of $2^{2m-1}-1$ that does not divide $2^j-1$ for any $1\leq j<2m-1$. Then $p\geq4m-1>2b-1$, and $p$ does not divide the denominator of $F_{\Ahat}(b)$ or of $F_L(b)$. Moreover, if $p$ does not divide the numerator of $F_L(b)$, then $p$ divides the denominator of $r_b$.
\end{lemma}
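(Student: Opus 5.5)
Since $p$ is a primitive prime divisor of $2^{2m-1}-1$, the multiplicative order of $2$ modulo $p$ is exactly $2m-1$. I would use this order to get the size bound, then bound the primes that can appear in the denominators of $F_{\Ahat}(b)$ and $F_L(b)$, and finally compare $p$-adic valuations of the two terms of $r_b$.

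\emph{Size of $p$.} By Fermat, $2m-1$ divides $p-1$. Moreover $p$ is odd, since $2^{2m-1}-1$ is odd, so $p-1$ is even. As $2m-1$ is odd, it follows that $2(2m-1)\mid p-1$, and hence $p\ge 4m-1=4b+3>2b-1$.

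\emph{Denominators of $F_{\Ahat}(b)$ and $F_L(b)$.} I would work from the definition \eqref{eq:ek-FQ}:
\[
F_Q(b)=[U^{m}]\,\frac{Q(U)^{2m}}{Q(4U)}-\frac{1}{m}\,[U^{m-1}]\,\frac{\bigl(Q(U)^{2m}\bigr)'}{Q(4U)} .
\]
Only the coefficients of $Q$ up to $U^m$ enter, together with the factor $1/m$. Both $Q$ and $1/Q$ have constant term $1$, so the coefficients of $1/Q(4U)$ up to $U^m$ are integer polynomials in those of $Q$. For $Q_L$ and $Q_{\Ahat}$, the coefficient of $U^j$ has the form (power of $2$)$\cdot B_{2j}/(2j)!$, as in the proof of Lemma~\ref{lem:ek-normalisation}. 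By von Staudt--Clausen, together with the factorial, every prime in these denominators is at most $2j+1\le 2m+1$. Also $m<p$. Since $p\ge 4m-1>2m+1$ (because $m\ge3$), $p$ divides neither denominator; in $p$-adic terms, $v_p(F_{\Ahat}(b))\ge0$ and $v_p(F_L(b))\ge0$. This is the second claim.

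\emph{The last claim.} Write $r_b=\bigl(F_{\Ahat}(b)-t_mF_L(b)\bigr)/a_m$ with $t_m=-1/\bigl(2^{2m+1}(2^{2m-1}-1)\bigr)$. Let $v=v_p(2^{2m-1}-1)\ge1$. By Proposition~\ref{prop:L-valuation}, $F_L(b)\neq0$, so its $p$-adic valuation is defined.
\begin{itemize}
\item If $p$ does not divide the numerator of $F_L(b)$, then $v_p(F_L(b))=0$, and so $v_p(t_mF_L(b))=-v<0$.
\item On the other hand, $v_p(F_{\Ahat}(b))\ge0$.
\end{itemize}
The two terms therefore have different valuations, and the difference has $p$-adic valuation exactly $-v$. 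Dividing by $a_m\in\{1,2\}$ does not change this, since $p$ is odd. Hence $v_p(r_b)=-v<0$, that is, $p$ divides the denominator of $r_b$.

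The only step requiring care is the denominator bound. There I must ensure that all contributions to the coefficients up to degree $m$, including the $1/m$ factor and the inversion $1/Q(4U)$, introduce only primes at most $2m+1$. I would check this via the Bernoulli-number expansions already recorded in the proof of Lemma~\ref{lem:ek-normalisation}.
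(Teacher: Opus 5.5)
Your proposal is correct and follows the paper's proof essentially step by step: the order argument giving $2(2m-1)\mid p-1$, the $p$-integrality of $F_{\Ahat}(b)$ and $F_L(b)$ via von Staudt--Clausen together with $m<p$, and the comparison of the $p$-adic valuations of the two terms of $r_b$. One small imprecision, harmless for the denominator bound: the coefficient of $U^j$ in $Q_{\Ahat}$ is $2(1-2^{2j-1})B_{2j}/\bigl(4^j(2j)!\bigr)$, so besides a power of $2$ it carries an odd integer factor.
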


\begin{proof}
The multiplicative order of $2$ modulo $p$ is $2m-1$, so $2m-1$ divides $p-1$. Since $p$ and $2m-1$ are odd, $2(2m-1)$ divides $p-1$, and $p\geq4m-1$. Recall that a rational number $x$ is \emph{$p$-integral} if $p$ does not divide the denominator of $x$ in lowest terms, that is, if $x=0$ or $v_p(x)\geq0$, where $v_p$ is the $p$-adic valuation, defined as $v_2$ in Section~\ref{subsec:k0-arithmetic} with $p$ in place of $2$. The factor $1/m$ in~\eqref{eq:ek-FQ} is $p$-integral since $m<p$, and the coefficients of $Q_L$ and $Q_{\Ahat}$ in degree at most $m$ involve the Bernoulli numbers $B_{2j}$ with $j\leq m$, together with the factorials $(2j)!$ and powers of $2$. By the theorem of von Staudt and Clausen~\cite[pp.~373--374]{vonStaudt}, \cite{Clausen}, \cite[Theorem~118, p.~115]{HardyWright} the primes in their denominators are at most $2m+1<p$. Hence $F_L(b)$ and $F_{\Ahat}(b)$ are $p$-integral. The prime $p$ divides $2^{2m+1}(2^{2m-1}-1)$, the denominator of $t_m$, to the same power as it divides $2^{2m-1}-1$, and $a_m\in\{1,2\}$. So if $p$ does not divide the numerator of $F_L(b)$, then $t_mF_L(b)$ has negative $p$-adic valuation while $F_{\Ahat}(b)$ has nonnegative $p$-adic valuation, and therefore $r_b=(F_{\Ahat}(b)-t_mF_L(b))/a_m$ has negative $p$-adic valuation.
\end{proof}

A prime $p$ as in Lemma~\ref{lem:zsigmondy}, a primitive prime divisor of $2^{2m-1}-1$, exists for every $m\geq2$ by Zsigmondy's theorem~\cite[\S XII, p.~283]{Zsigmondy}, \cite[Theorem~V, p.~177]{BirkhoffVandiver}, and it is at least $4m-1$. Whether it divides the numerator of $F_L(b)$ is a finite computation, and it needs no factorisation. Let $C_m$ be the number $2^{2m-1}-1$ with all its prime factors at most $2m+1$ removed. By Proposition~\ref{prop:ek-closed-form},
\begin{equation}\label{eq:FL-mod}
 F_L(b)=[u^m]\,\frac{1}{(1+u)\,A(u)},\qquad A(u)=\sum_{k\geq0}\frac{u^k}{2k+1},
\end{equation}
since $A(y^2)=\operatorname{artanh}(y)/y$. The coefficients of $A$ up to $u^m$ have denominators prime to $C_m$, so~\eqref{eq:FL-mod} determines $F_L(b)$ modulo $C_m$. If this residue is prime to $C_m$, then no primitive prime divisor of $2^{2m-1}-1$ divides the numerator of $F_L(b)$. We have been unable to find a general argument that excludes a common factor.

\begin{proposition}\label{prop:computation}
Let $b\equiv63\pmod{64}$ with $b<2^{14}$, and let $m=b+1$. Then the residue of $F_L(b)$ modulo $C_m$ is prime to $C_m$, and therefore the denominator of $r_b$ has a prime factor $p>2b-1$. Consequently, $\mu(\Sigma_{\rho_b,\beta})\neq0$ for every smooth conjugation-equivariant map $\beta$ with $k(\beta)=k_0(b)$.
\end{proposition}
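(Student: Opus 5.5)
The statement splits into a finite computational check and a deduction from results already proved. I will do the deduction first and then describe the computation.

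\emph{Deduction.} Fix $b\equiv63\pmod{64}$ with $b<2^{14}$, set $m=b+1$, and assume that the residue $F_L(b)\bmod C_m$ is prime to $C_m$.

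Zsigmondy's theorem gives a primitive prime divisor $p$ of $2^{2m-1}-1$. By Lemma~\ref{lem:zsigmondy}, $p\geq4m-1>2m+1$, so $p$ survives the removal of small primes and divides $C_m$. By~\eqref{eq:FL-mod}, the coefficients of $A$ up to $u^m$ are $p$-integral. Also $1/A$ and $1/(1+u)$ have $p$-integral coefficients, since their constant terms are $1$. Hence $F_L(b)$ is $p$-integral, and its reduction modulo $p$ equals the reduction of the computed residue modulo $C_m$.

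That residue is prime to $C_m$, so $p$ does not divide the numerator of $F_L(b)$. Lemma~\ref{lem:zsigmondy} then shows that $p$ divides the denominator of $r_b$, and $p>2b-1$.

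By Corollary~\ref{cor:k0-arithmetic}(1), $p\nmid k_0(b)$. Therefore $v_p(k_0(b)r_b)<0$, so $k_0(b)r_b\notin\Z$. By Theorem~\ref{thm:ek}, $\mu(\Sigma_{\rho_b,\beta})=k(\beta)r_b=k_0(b)r_b\neq0$ for every $\beta$ with $k(\beta)=k_0(b)$.

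\emph{Computation.} The remaining work is the finite check, which I would organise in four steps.
\begin{enumerate}
\item For each of the $256$ values $b=63+64i<2^{14}$, form $N=2^{2m-1}-1$.
\item Strip from $N$ every prime $\ell\leq2m+1$ by repeated division to obtain $C_m$. This needs only trial division by small primes, so no factorisation of large numbers is required.
\item Compute the power series of $1/((1+u)A(u))$ modulo $C_m$ up to degree $m$. The inverses $(2k+1)^{-1}\bmod C_m$ exist because every prime factor of $2k+1\le 2m+1$ has been removed from $C_m$. The reciprocal is computed by the usual recursion $c_n=-\sum_{k=1}^{n}a_kc_{n-k}$, and the factor $1/(1+u)$ by partial sums with alternating signs. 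This costs $O(m^2)$ multiplications of integers with about $2m$ bits.
\item Take $g=\gcd(\text{residue},C_m)$ and check that $g=1$. If $C_m=1$ the check would be vacuous, but Zsigmondy guarantees $C_m>1$.
\end{enumerate}

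\emph{Main obstacle.} The deduction is routine; the cost is in the computation. The largest case, $m\approx2^{14}$, involves $m^2\approx2.7\times10^8$ multiplications of $32{,}000$-bit numbers. This is heavy but feasible with fast bignum arithmetic.

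To reduce the work, I would first use the primitive factors of small size that are already known, or run the recursion modulo moderately sized factors. If that proves expensive, I would instead run the recursion modulo the full $C_m$ with Montgomery arithmetic.

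A further risk is that the residue shares a factor with $C_m$ by accident. For primitive primes of size greater than $4m$ this has probability roughly $1/p$ per prime, so I expect no failures. The archived programs~\cite{GGINFcode} would report $g$ for each $b$.
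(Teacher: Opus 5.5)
Your proposal is correct and is essentially the paper's proof: the Zsigmondy prime $p\geq4m-1$ divides $C_m$, so the computed residue being prime to $C_m$ gives $p\nmid\operatorname{num}F_L(b)$, and Lemma~\ref{lem:zsigmondy}, Corollary~\ref{cor:k0-arithmetic}(1) and Theorem~\ref{thm:ek} then give $v_p(k_0(b)r_b)<0$, while the first assertion is, as in the paper, an exact computation from~\eqref{eq:FL-mod} modulo $C_m$ (the paper also cross-checks it with an independent program based on~\eqref{eq:ek-FQ}). One small precision: the $p$-integrality you need concerns only the coefficients of $1/\bigl((1+u)A(u)\bigr)$ up to $u^m$, which depend only on the coefficients of $A$ up to $u^m$; the full series $1/A$ need not be $p$-integral.
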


\begin{proof}
The $256$ residues were computed from~\eqref{eq:FL-mod} in exact arithmetic modulo $C_m$, and each is prime to $C_m$. A second program, written independently, computes the residues from the definition~\eqref{eq:ek-FQ} instead and reaches the same conclusion. By Zsigmondy's theorem, $2^{2m-1}-1$ has a primitive prime divisor $p$. Since $p\geq4m-1>2m+1$, the prime $p$ divides $C_m$, and since the residue of $F_L(b)$ modulo $C_m$ is prime to $C_m$, the prime $p$ does not divide the numerator of $F_L(b)$. Lemma~\ref{lem:zsigmondy} shows that $p$ divides the denominator of $r_b$ and that $p>2b-1$. The last assertion follows from Theorem~\ref{thm:ek} and Corollary~\ref{cor:k0-arithmetic}, as explained before Lemma~\ref{lem:zsigmondy}. The programs and their outputs are archived at Zenodo~\cite{GGINFcode}.
\end{proof}

Combining Proposition~\ref{prop:computation} with the curvature theorems, as in the proof of
Corollary~\ref{cor:main.cor.B}, we obtain exotic spheres of positive sectional curvature in the dimensions
$n\equiv255\pmod{256}$ below $2^{16}$. Unlike Theorem~\ref{thm:main.thm.A} and Corollary~\ref{cor:main.cor.B}, this
result depends on the computation of Proposition~\ref{prop:computation}.

\begin{corollary}\label{cor:computation}
For every $n\equiv255\pmod{256}$ with $n<2^{16}$ there is an exotic $n$-sphere with a Riemannian metric of strictly positive sectional curvature. Together with Corollary~\ref{cor:main.cor.B}, there is one in every dimension $n\equiv3\pmod4$ with $11\leq n<2^{16}$.
\end{corollary}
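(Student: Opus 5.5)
The corollary has two parts, and I will take them in order. For the first, let $n\equiv255\pmod{256}$ with $n<2^{16}$ and set $b=(n-3)/4$. Then $b\equiv63\pmod{64}$ and $b<2^{14}$, since $n=4b+3<2^{16}$ forces $4b<2^{16}-3$. Also $b\ge63\ge2$, so $m=b+1$ lies in the range of the earlier results.

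By Proposition~\ref{prop:degree-tower}, there is a smooth conjugation-equivariant map $\beta$ for $\rho_b$ with $k(\beta)=k_0(b)$. Proposition~\ref{prop:computation} then gives $\mu(\Sigma_{\rho_b,\beta})\neq0$. Exactly as in the proof of Theorem~\ref{thm:main.thm.A}, we have $\mu(S^n)=0$, and $\mu$ is an oriented diffeomorphism invariant that changes sign under orientation reversal. Hence $\Sigma_{\rho_b,\beta}$ is not diffeomorphic to $S^n$ with either orientation, so it is exotic. For the metric, Lemma~\ref{lem:action-bound} gives $\|K_y\|\le1\le2$, so Theorem~\ref{thm:curvature-bounded} provides a metric of strictly positive sectional curvature. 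Theorem~\ref{thm:curvature-general} gives the same conclusion directly.

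For the second part, take $n\equiv3\pmod4$ with $11\le n<2^{16}$. If $n\not\equiv255\pmod{256}$, Corollary~\ref{cor:main.cor.B} applies. Otherwise the first part applies.

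All the real content is in the cited results, so there is no genuine obstacle here. The only point needing care is the bookkeeping check that $n<2^{16}$ corresponds exactly to $b<2^{14}$ within this residue class, so that Proposition~\ref{prop:computation} covers every case. This result, unlike Corollary~\ref{cor:main.cor.B}, depends on the computer calculation behind Proposition~\ref{prop:computation}.
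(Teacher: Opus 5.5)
Your proposal is correct and follows the paper's proof step for step. You set $b=(n-3)/4$, so that $b\equiv63\pmod{64}$ and $b<2^{14}$, and take $\beta$ with $k(\beta)=k_0(b)$ from Proposition~\ref{prop:degree-tower}; you then get $\mu\neq0$ from Proposition~\ref{prop:computation} and conclude exoticness and positive curvature as in the proofs of Theorem~\ref{thm:main.thm.A} and Corollary~\ref{cor:main.cor.B}, with the same case split for the second claim.
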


\begin{proof}
Set
\[
  b=\frac{n-3}{4}.
\]
Then $b\equiv63\pmod{64}$, and $b<2^{14}$ since $n<2^{16}$. By
Proposition~\ref{prop:degree-tower}, there is a smooth conjugation-equivariant map $\beta$ for $\rho_b$ with $k(\beta)=k_0(b)$, and by Proposition~\ref{prop:computation},
\[
  \mu(\Sigma_{\rho_b,\beta})\neq0 .
\]
As in the proof of Theorem~\ref{thm:main.thm.A}, $\Sigma_{\rho_b,\beta}$ is an exotic $n$-sphere. As in the proof
of Corollary~\ref{cor:main.cor.B}, it has a metric of strictly positive sectional curvature. Together with
Corollary~\ref{cor:main.cor.B}, this covers every $n\equiv3\pmod4$ with $11\leq n<2^{16}$.
\end{proof}

For $b=63$ the number $2^{127}-1$ is prime, and Proposition~\ref{prop:computation} says that it does not divide the numerator of $F_L(63)$.

The program that computes the residues from~\eqref{eq:FL-mod} was tested in two ways. For $2\leq b\leq19$ and $b=63,64$ the residue modulo $C_m$ agrees with $F_L(b)$ computed in exact arithmetic from the definition~\eqref{eq:ek-FQ}. At $b=2$, run modulo the prime $61$, which divides the numerator of $F_L(2)=-244/189$, it returns $0$.

Exact computations give further checks: $\gcd(\operatorname{num}F_L(b),2^{2b+1}-1)=1$ for $2\leq b\leq300$, with $F_L(b)$ computed from the closed form, and for $2\leq b\leq64$, with $F_L(b)$ computed from~\eqref{eq:ek-FQ}. Table~\ref{tab:odd-primes} lists a primitive prime divisor of $2^{2b+1}-1$ for each $b\equiv7\pmod8$ with $b\leq64$; each was checked to divide the denominator of $r_b$ computed from the closed form~\eqref{eq:ek-rb-closed}.

\begin{table}[ht]
\centering\footnotesize
\renewcommand{\arraystretch}{1.15}
\begin{tabular}{cccc}
$b$ & $n=4b+3$ & $2b-1$ & $p$\\\hline
7 & 31 & 13 & $151$\\
15 & 63 & 29 & $2^{31}-1$\\
23 & 95 & 45 & $2351$\\
31 & 127 & 61 & $92737$\\
39 & 159 & 77 & $2687$\\
47 & 191 & 93 & $191$\\
55 & 223 & 109 & $321679$\\
63 & 255 & 125 & $2^{127}-1$\\
\end{tabular}
\caption{For each $b\equiv7\pmod8$ with $b\leq64$, a primitive prime divisor $p$ of $2^{2b+1}-1$ that divides the denominator of $r_b$. In every row $p>2b-1$.}\label{tab:odd-primes}
\end{table}

As a check of the evaluation in Sections~\ref{subsec:ek-ring}--\ref{subsec:ek-evaluation}, the numbers $r_b$, $2\leq b\leq64$, were also computed by a separate program, from the fixed-point formula for the action of the circle $S^1\subset G$ on the filling $Z$ of Section~\ref{sec:ek} defined in Remark~\ref{rem:ek-checks}. That program normalises the evaluation on the zero section $B=S^4\times\HP^{b-1}$ as Deng, Hu and Zhang~\cite{DHZ} and Chen, Deng and Zhang~\cite{CDZ} do, with $\langle x^{b-1}y_o,[B]\rangle=1$ for the classes $x$ and $y_o$ of Section~\ref{subsec:ek-ring}, whereas our normalisation is $\langle x^{b-1}y_o,[B]\rangle=(-1)^{b-1}$ (Lemma~\ref{lem:ek-orientation}(ii); see the proof of Proposition~\ref{prop:ek-ring}(iii)). The program also divides by $\kappa=-k(\beta)$ instead of $k(\beta)$. Hence it computes $(-1)^br_b$. The results agree with the closed form, including the sign, for every such $b$.

\begin{remark}\label{rem:b-one}
For $b=1$ every star sphere for $\rho_1$ is standard. By Lemma~\ref{lem:ek-normal-form}, whose proof applies for $b=1$, we may assume that $\beta$ equals $1$ near $S(\R^3)$. Since $G$ acts freely and transitively on $S(\HH)$, equivariance gives $\beta(p,w)=\hat w\,c(p)\,\hat w^{-1}$ for $w\neq0$, where $\hat w=w/|w|$ and $c(p)=\beta\bigl(p,(1-|p|^2)^{1/2}\bigr)$, and $c=1$ near $|p|=1$. Hence $\sigma_\beta(p,w)=\bigl(p,\,w\,c(p)^{-1}\bigr)$, and this formula defines a diffeomorphism of $D(V_1)$ extending $\sigma_\beta$. Therefore, $\Sigma_{\rho_1,\beta}$ is the boundary of a disk, and it is standard.
\end{remark}

\begin{remark}\label{rem:which-classes}
The proof of Theorem~\ref{thm:mu-nonzero} uses only a bound on $v_2(k(\beta))$. Hence every nonzero degree $k(\beta)$ with $v_2(k(\beta))\leq2(b-1)$ gives an exotic sphere when $b\not\equiv7\pmod8$, and the same holds with $2b-5$ in place of $2(b-1)$ when $b\geq5$ and $b\not\equiv63\pmod{64}$. For $b=2$ the star spheres with $k(\beta)\in12\Z$ realise the subgroup $2\Theta_{11}$ of $\Theta_{11}\cong\Z/992$~\cite[Theorem~1.1 and Lemma~3.3]{DHZ}.
\end{remark}

\begin{remark}[Orders detected by the Eells--Kuiper invariant]\label{rem:orders}
The classes in $\Theta_{4b+3}$ of homotopy spheres that bound compact spin manifolds form a subgroup~\cite[Conditions~$\mu$, p.~96, and Remark, p.~99]{EK}, since the boundary
connected sum of two spin manifolds is spin and $-\Sigma$ bounds $-W$ if $\Sigma$ bounds $W$, and this subgroup contains the star spheres for $\rho_b$
(Proposition~\ref{prop:ek-filling}). The Eells--Kuiper invariant is additive under connected sum~\cite[Remark, p.~99]{EK},
so it defines a homomorphism from this subgroup to $\Q/\Z$. Hence, for every homotopy sphere $\Sigma$ in this subgroup,
the order of $\mu(\Sigma)$ in $\Q/\Z$ divides the order of $[\Sigma]$ in $\Theta_{4b+3}$. The two orders need not be equal, since
$\mu$ need not be injective.

For $2\le b\le6$, let $\beta_b$ be a smooth conjugation-equivariant map for $\rho_b$ with $k(\beta_b)=k_0(b)$
(Proposition~\ref{prop:degree-tower}), let $\Sigma_b=\Sigma_{\rho_b,\beta_b}$, and let $q_b$ be the order of
$\mu(\Sigma_b)$ in $\Q/\Z$. By Theorem~\ref{thm:main.thm.C}, $\mu(\Sigma_b)=k_0(b)\,r_b$ in $\Q/\Z$.
Table~\ref{tab:orders} lists $k_0(b)\,r_b$ and $q_b$, with $k_0(b)$ from Remark~\ref{rem:k0-values} and $r_b$ from
Table~\ref{tab:ek-rb}.

For $0\le j<q_b$, let $\beta_b^j$ be the pointwise $j$-th power of $\beta_b$. For a fixed representation $\rho_b$,
the connected sum of the star spheres of two maps is the star sphere of their pointwise product
(Lemma~\ref{lem:reentrance}(3)), so the star spheres for $\rho_b$ form a subgroup of $\Theta_{4b+3}$. In particular,
$[\Sigma_{\rho_b,\beta_b^j}]=j\,[\Sigma_b]$ in $\Theta_{4b+3}$, so $\mu(\Sigma_{\rho_b,\beta_b^j})=j\,\mu(\Sigma_b)$.
These $q_b$ values are distinct in $\Q/\Z$. Hence the star spheres $\Sigma_{\rho_b,\beta_b^j}$ with $0\le j<q_b$
represent $q_b$ distinct elements of $\Theta_{4b+3}$. For $j\ne0$ the invariant $\mu(\Sigma_{\rho_b,\beta_b^j})$ is
nonzero, so $\Sigma_{\rho_b,\beta_b^j}$ is an exotic sphere, as in the proof of Theorem~\ref{thm:main.thm.A}. By
Theorem~\ref{thm:curvature-general}, each of these spheres carries a metric of strictly positive sectional curvature.

By~\cite[p.~105]{EK}, every homotopy $11$-sphere bounds a parallelizable manifold, so $\mu$ is defined on all of
$\Theta_{11}$, and the group $\Theta_{11}$ is cyclic of order $992$, generated by a homotopy sphere $M_0$ with $\mu(M_0)=-1/992$. Hence $\mu$ is injective on $\Theta_{11}$, and $[\Sigma_2]$ has order exactly $496$ and
generates the subgroup $2\Theta_{11}$, in agreement with Remark~\ref{rem:which-classes}. For $3\le b\le6$ the order
$q_b$ divides the order of $[\Sigma_b]$, and we do not know whether the two are equal.
\end{remark}

\begin{table}[ht]
\centering\footnotesize
\renewcommand{\arraystretch}{1.15}
\begin{tabular}{ccccc}
$b$ & $n=4b+3$ & $k_0(b)$ & $k_0(b)\,r_b$ & $q_b$\\\hline
2 & 11 & $12$ & $1/496$ & $496$\\
3 & 15 & $180$ & $-15/1016$ & $1016$\\
4 & 19 & $15120$ & $4041/32704$ & $32704$\\
5 & 23 & $453600$ & $-95033/65504$ & $65504$\\
6 & 27 & $59875200$ & $19949997/1048448$ & $1048448$\\
\end{tabular}
\caption{For $2\le b\le6$, the number $k_0(b)\,r_b$, whose class in $\Q/\Z$ is $\mu(\Sigma_b)$
(Theorem~\ref{thm:main.thm.C}), and the order $q_b$ of $\mu(\Sigma_b)$ in $\Q/\Z$.}\label{tab:orders}
\end{table}

\bibliographystyle{amsplain}
\bibliography{references}

\providecommand{\bysame}{\leavevmode\hbox to3em{\hrulefill}\thinspace}
\providecommand{\MR}{\relax\ifhmode\unskip\space\fi MR }
\providecommand{\MRhref}[2]{%
  \href{http://www.ams.org/mathscinet-getitem?mr=#1}{#2}
}
\providecommand{\href}[2]{#2}
\begin{thebibliography}{10}

\bibitem{AloffWallach}
S.~Aloff and N.~R. Wallach, \emph{An infinite family of distinct
  {$7$}-manifolds admitting positively curved {Riemannian} structures}, Bull.
  Amer. Math. Soc. \textbf{81} (1975), 93--97.

\bibitem{AtiyahHirzebruch}
Michael~F. Atiyah and Friedrich Hirzebruch, \emph{Riemann--{R}och theorems for
  differentiable manifolds}, Bull. Amer. Math. Soc. \textbf{65} (1959),
  276--281.

\bibitem{BackHsiang}
Allen Back and Wu-Yi Hsiang, \emph{Equivariant geometry and {Kervaire}
  spheres}, Trans. Amer. Math. Soc. \textbf{304} (1987), no.~1, 207--227.

\bibitem{Bazaikin}
Ya.~V. Bazaikin, \emph{On a certain family of closed {$13$}-dimensional
  {Riemannian} manifolds of positive curvature}, Sib. Math. J. \textbf{37}
  (1996), no.~6, 1068--1085.

\bibitem{Berger}
M.~Berger, \emph{Les vari{\'e}t{\'e}s riemanniennes homog{\`e}nes normales
  simplement connexes {\`a} courbure strictement positive}, Ann. Scuola Norm.
  Sup. Pisa (3) \textbf{15} (1961), 179--246.

\bibitem{BirkhoffVandiver}
George~D. Birkhoff and Harry~S. Vandiver, \emph{On the integral divisors of
  $a^n-b^n$}, Ann. of Math. (2) \textbf{5} (1904), no.~4, 173--180.

\bibitem{Bredon}
Glen~E. Bredon, \emph{Introduction to compact transformation groups}, Pure and
  Applied Mathematics, vol.~46, Academic Press, New York, 1972.

\bibitem{BrendleHung}
Simon Brendle and Pei-Ken Hung, \emph{A metric on {$S^2\times S^2$} with
  positive sectional curvature}, 2026, arXiv:2608.19068.

\bibitem{BrendleSchoen}
Simon Brendle and Richard Schoen, \emph{Manifolds with {$1/4$}-pinched
  curvature are space forms}, J. Amer. Math. Soc. \textbf{22} (2009), no.~1,
  287--307.

\bibitem{CavenaghiSperanca}
Leonardo~F. Cavenaghi and Llohann~D. Speran{\c c}a, \emph{On the geometry of
  some equivariantly related manifolds}, Int. Math. Res. Not. IMRN (2020),
  no.~23, 9730--9768.

\bibitem{CDNZ}
Zhiqi Chen, Shaoqiang Deng, Junna Ni, and Hui Zhang, \emph{Positive sectional
  curvature on homotopy nine-spheres}, 2026, arXiv:2610.03298v2.

\bibitem{CDZ}
Zhiqi Chen, Shaoqiang Deng, and Hui Zhang, \emph{Positive sectional curvature
  on a family of exotic fifteen-spheres}, 2026, arXiv:2610.03310v1.

\bibitem{Clausen}
Thomas Clausen, \emph{Theorem}, Astron. Nachr. \textbf{17} (1840), no.~406,
  351--352.

\bibitem{CrowleyGoette}
Diarmuid Crowley and Sebastian Goette, \emph{Kreck--{S}tolz invariants for
  quaternionic line bundles}, Trans. Amer. Math. Soc. \textbf{365} (2013),
  no.~6, 3193--3225.

\bibitem{Dearricott}
Owen Dearricott, \emph{A {$7$}-manifold with positive curvature}, Duke Math. J.
  \textbf{158} (2011), no.~2, 307--346.

\bibitem{DHZ}
Shaoqiang Deng, Zhiguang Hu, and Hui Zhang, \emph{Positive sectional curvature
  and non-isometric circle actions on a family of eleven-spheres}, 2026,
  arXiv:2609.32680v2.

\bibitem{DPR}
Carlos Dur{\'a}n, Thomas P{\"u}ttmann, and A.~Rigas, \emph{An infinite family
  of {G}romoll--{M}eyer spheres}, Arch. Math. (Basel) \textbf{95} (2010),
  no.~3, 269--282.

\bibitem{DR}
Carlos~E. Dur{\'a}n and Alcibiades Rigas, \emph{Equivariant homotopy and
  deformations of diffeomorphisms}, Differential Geom. Appl. \textbf{27}
  (2009), no.~2, 206--211.

\bibitem{EK}
James Eells, Jr. and Nicolaas~H. Kuiper, \emph{An invariant for certain smooth
  manifolds}, Ann. Mat. Pura Appl. (4) \textbf{60} (1962), 93--110.

\bibitem{Eschenburg}
J.-H. Eschenburg, \emph{New examples of manifolds with strictly positive
  curvature}, Invent. Math. \textbf{66} (1982), 469--480.

\bibitem{FangRong}
Fuquan Fang and Xiaochun Rong, \emph{Homeomorphism classification of positively
  curved manifolds with almost maximal symmetry rank}, Math. Ann. \textbf{332}
  (2005), no.~1, 81--101.

\bibitem{GG810}
Fernando Galaz-Garc{\'i}a, \emph{Positive sectional curvature on the exotic
  {$8$}-sphere and order-three homotopy {$10$}-spheres}, 2026,
  arXiv:2609.38126v2.

\bibitem{GGLean}
\bysame, \emph{Positive sectional curvature on the exotic 8-sphere and
  order-three homotopy 10-spheres: a {Lean} formalisation}, Zenodo, 2026,
  Version 1.0.0, doi:10.5281/zenodo.23205660.

\bibitem{GGINFcode}
\bysame, \emph{Programs and outputs for ``{P}ositive sectional curvature on
  exotic spheres in infinitely many dimensions''}, Zenodo, 2026, Version 1.0.0,
  doi:10.5281/zenodo.23571372.

\bibitem{Gessel}
Ira~M. Gessel, \emph{Lagrange inversion}, J. Combin. Theory Ser. A \textbf{144}
  (2016), 212--249.

\bibitem{GKS}
S.~Goette, M.~Kerin, and K.~Shankar, \emph{Highly connected {$7$}-manifolds and
  non-negative sectional curvature}, Ann. of Math. (2) \textbf{191} (2020),
  no.~3, 829--892.

\bibitem{Gottlieb}
Daniel~Henry Gottlieb, \emph{Applications of bundle map theory}, Trans. Amer.
  Math. Soc. \textbf{171} (1972), 23--50.

\bibitem{Gromoll}
Detlef Gromoll, \emph{Differenzierbare {Strukturen} und {Metriken} positiver
  {Kr{\"u}mmung} auf {Sph{\"a}ren}}, Math. Ann. \textbf{164} (1966), 353--371.

\bibitem{GM}
Detlef Gromoll and Wolfgang Meyer, \emph{An exotic sphere with nonnegative
  sectional curvature}, Ann. of Math. (2) \textbf{100} (1974), 401--406.

\bibitem{Gromov}
Michael Gromov, \emph{Curvature, diameter and {B}etti numbers}, Comment. Math.
  Helv. \textbf{56} (1981), 179--195.

\bibitem{GroveZiller}
K.~Grove and W.~Ziller, \emph{Curvature and symmetry of {Milnor} spheres}, Ann.
  of Math. (2) \textbf{152} (2000), no.~1, 331--367.

\bibitem{GroveSymm}
Karsten Grove, \emph{Geometry of, and via, symmetries}, Conformal, {Riemannian}
  and {Lagrangian} Geometry ({Knoxville}, {TN}, 2000), Univ. Lecture Ser.,
  vol.~27, Amer. Math. Soc., Providence, RI, 2002, pp.~31--53.

\bibitem{GroveSurvey}
\bysame, \emph{Developments around positive sectional curvature}, Surveys in
  Differential Geometry, vol.~13, International Press, 2009, arXiv:0902.4419,
  pp.~117--133.

\bibitem{GroveSearle}
Karsten Grove and Catherine Searle, \emph{Positively curved manifolds with
  maximal symmetry-rank}, J. Pure Appl. Algebra \textbf{91} (1994), 137--142.

\bibitem{GVZ}
Karsten Grove, Luigi Verdiani, and Wolfgang Ziller, \emph{An exotic {$T_1S^4$}
  with positive curvature}, Geom. Funct. Anal. \textbf{21} (2011), no.~3,
  499--524.

\bibitem{GWZ}
Karsten Grove, Burkhard Wilking, and Wolfgang Ziller, \emph{Positively curved
  cohomogeneity one manifolds and 3-{Sasakian} geometry}, J. Differential Geom.
  \textbf{78} (2008), no.~1, 33--111.

\bibitem{GFL23}
Shengtao Guo, Ethan~X. Fang, and Junwei Lu, \emph{A metric with positive
  sectional curvature on {$S^2\times S^3$}}, 2026, arXiv:2608.22133.

\bibitem{GFL33}
\bysame, \emph{A metric with positive sectional curvature on {$S^3\times
  S^3$}}, 2026, arXiv:2609.11980.

\bibitem{GFL}
\bysame, \emph{A two-stage construction of positive curvature on the
  {Gromoll--Meyer} sphere}, 2026, arXiv:2609.12882v1.

\bibitem{HardyWright}
G.~H. Hardy and E.~M. Wright, \emph{An introduction to the theory of numbers},
  sixth ed., Oxford University Press, Oxford, 2008, Revised by D. R.
  Heath-Brown and J. H. Silverman.

\bibitem{HLY}
Y.-H. He, Z.~Liu, and S.-T. Yau, \emph{Positive sectional curvature on all
  smooth {$7$}-spheres}, 2026, arXiv:2609.29426v1.

\bibitem{Hitchin}
Nigel Hitchin, \emph{Harmonic spinors}, Adv. Math. \textbf{14} (1974), 1--55.

\bibitem{HornJohnson}
Roger~A. Horn and Charles~R. Johnson, \emph{Matrix analysis}, second ed.,
  Cambridge University Press, Cambridge, 2013.

\bibitem{HsiangKleiner}
Wu-Yi Hsiang and Bruce Kleiner, \emph{On the topology of positively curved
  {$4$}-manifolds with symmetry}, J. Differential Geom. \textbf{29} (1989),
  no.~3, 615--621.

\bibitem{James57}
I.~M. James, \emph{On the suspension sequence}, Ann. of Math. (2) \textbf{65}
  (1957), 74--107.

\bibitem{Kennard}
Lee Kennard, \emph{On the {Hopf} conjecture with symmetry}, Geom. Topol.
  \textbf{17} (2013), no.~1, 563--593.

\bibitem{KWW}
Lee Kennard, Michael Wiemeler, and Burkhard Wilking, \emph{Splitting of torus
  representations and applications in the {Grove} symmetry program}, 2021,
  arXiv:2106.14723.

\bibitem{KM}
Michel~A. Kervaire and John~W. Milnor, \emph{Groups of homotopy spheres: {I}},
  Ann. of Math. (2) \textbf{77} (1963), 504--537.

\bibitem{Kosinski}
Antoni~A. Kosinski, \emph{Differential manifolds}, Pure and Applied
  Mathematics, vol. 138, Academic Press, Boston, 1993.

\bibitem{Lee}
John~M. Lee, \emph{Introduction to smooth manifolds}, second ed., Graduate
  Texts in Mathematics, vol. 218, Springer, New York, 2013.

\bibitem{MilnorDiffSpheres}
John Milnor, \emph{Differentiable structures on spheres}, Amer. J. Math.
  \textbf{81} (1959), no.~4, 962--972.

\bibitem{MilnorTDV}
John~W. Milnor, \emph{Topology from the differentiable viewpoint}, The
  University Press of Virginia, Charlottesville, 1965.

\bibitem{MS}
John~W. Milnor and James~D. Stasheff, \emph{Characteristic classes}, Annals of
  Mathematics Studies, vol.~76, Princeton University Press, Princeton, 1974.

\bibitem{Myers}
S.~B. Myers, \emph{Riemannian manifolds with positive mean curvature}, Duke
  Math. J. \textbf{8} (1941), 401--404.

\bibitem{Ouyang}
Zexuan Ouyang, \emph{A positively curved metric on the {Gromoll--Meyer}
  sphere}, 2026, arXiv:2609.11484v1.

\bibitem{PW}
Peter Petersen and Frederick Wilhelm, \emph{An exotic sphere with positive
  sectional curvature}, 2008, arXiv:0805.0812v3.

\bibitem{RW}
Philipp Reiser and David~J. Wraith, \emph{A generalization of the {Perelman}
  gluing theorem and applications}, 2024, arXiv:2308.06996v2.

\bibitem{Searle}
Catherine Searle, \emph{Cohomogeneity and positive curvature in low
  dimensions}, Math. Z. \textbf{214} (1993), no.~3, 491--498.

\bibitem{SearleCorr}
\bysame, \emph{Corrigendum: {Cohomogeneity} and positive curvature in low
  dimensions}, Math. Z. \textbf{226} (1997), no.~1, 165--167.

\bibitem{Selick78}
Paul Selick, \emph{Odd primary torsion in {$\pi_k(S^3)$}}, Topology \textbf{17}
  (1978), no.~4, 407--412.

\bibitem{Serre}
Jean-Pierre Serre, \emph{Homologie singuli{\`e}re des espaces fibr{\'e}s.
  {A}pplications}, Ann. of Math. (2) \textbf{54} (1951), no.~3, 425--505.

\bibitem{Smale}
Stephen Smale, \emph{Generalized {Poincar\'e}'s conjecture in dimensions
  greater than four}, Ann. of Math. (2) \textbf{74} (1961), no.~2, 391--406.

\bibitem{Sp}
Llohann~D. Speran{\c c}a, \emph{Pulling back the {Gromoll--Meyer} construction
  and models of exotic spheres}, Proc. Amer. Math. Soc. \textbf{144} (2016),
  no.~7, 3181--3196.

\bibitem{StanleyEC2}
Richard~P. Stanley, \emph{Enumerative combinatorics, volume 2}, Cambridge
  Studies in Advanced Mathematics, vol.~62, Cambridge University Press,
  Cambridge, 1999.

\bibitem{Stolz}
Stephan Stolz, \emph{Simply connected manifolds of positive scalar curvature},
  Ann. of Math. (2) \textbf{136} (1992), no.~3, 511--540.

\bibitem{Straume}
Eldar Straume, \emph{Compact connected {Lie} transformation groups on spheres
  with low cohomogeneity, {I}}, Mem. Amer. Math. Soc. \textbf{119} (1996),
  no.~569, 1--93.

\bibitem{Toda}
Hirosi Toda, \emph{Composition methods in homotopy groups of spheres}, Annals
  of Mathematics Studies, vol.~49, Princeton University Press, Princeton, 1962.

\bibitem{vonStaudt}
K.~G.~C. von Staudt, \emph{Beweis eines {Lehrsatzes}, die {Bernoullischen}
  {Zahlen} betreffend}, J. Reine Angew. Math. \textbf{21} (1840), 372--374.

\bibitem{Wallach}
Nolan~R. Wallach, \emph{Compact homogeneous {Riemannian} manifolds with
  strictly positive curvature}, Ann. of Math. (2) \textbf{96} (1972), 277--295.

\bibitem{WilkingActa}
Burkhard Wilking, \emph{Torus actions on manifolds of positive sectional
  curvature}, Acta Math. \textbf{191} (2003), no.~2, 259--297.

\bibitem{WilkingAnnals}
\bysame, \emph{Positively curved manifolds with symmetry}, Ann. of Math. (2)
  \textbf{163} (2006), no.~2, 607--668.

\bibitem{WilkingSurvey}
\bysame, \emph{Nonnegatively and positively curved manifolds}, Surveys in
  Differential Geometry, vol.~11, International Press, 2007, arXiv:0707.3091,
  pp.~25--62.

\bibitem{Wraith}
David~J. Wraith, \emph{Exotic spheres with positive {Ricci} curvature}, J.
  Differential Geom. \textbf{45} (1997), no.~3, 638--649.

\bibitem{Ziller}
Wolfgang Ziller, \emph{Riemannian manifolds with positive sectional curvature},
  Geometry of Manifolds with Non-negative Sectional Curvature (Rafael Herrera
  and Luis Hern{\'a}ndez-Lamoneda, eds.), Lecture Notes in Math., vol. 2110,
  Springer, Cham, 2014, pp.~1--19.

\bibitem{Zsigmondy}
K.~Zsigmondy, \emph{Zur {Theorie} der {Potenzreste}}, Monatsh. Math. Phys.
  \textbf{3} (1892), 265--284.

\end{thebibliography}
\end{document}